\providecommand{\U}[1]{\protect\rule{.1in}{.1in}}
\renewenvironment{proof}[1][Proof]{\noindent\textbf{#1.} }{\ \rule{0.5em}{0.5em}}
\newenvironment{acknowledgement}{\textbf{Acknowledgement.}}{}
\newtheorem{theorem}{Theorem}
\newtheorem{corollary}[theorem]{Corollary}
\newtheorem{lemma}{Lemma}[section]
\newtheorem{proposition}[lemma]{Proposition}
\theoremstyle{definition}
\newtheorem{definition}[lemma]{Definition}
\newtheorem*{definition*}{Definition}
\newtheorem{remark}[lemma]{Remark}
\newtheorem{example}[lemma]{Example}
\renewcommand{\P}{\mathbb{P}}
\newcommand{\E}{\mathbb{E}}
\newcommand{\N}{\mathbb{N}}
\newcommand{\Z}{\mathbb{Z}}
\newcommand{\G}{\mathcal{G}}
\renewcommand{\H}{\mathbb{H}}
\newcommand{\R}{\mathbb{R}}
\newcommand{\Stab}{\mathrm{Stab}}
\newcommand{\Isom}{\mathrm{Isom}}
\newcommand{\core}{\mathrm{core}}
\newcommand{\acc}{\mathrm{acc}}
\newcommand{\RD}{\mathbb{RD}}
\newcommand{\BRD}{\mathbb{BRD}}
\newcommand{\Ends}{\mathrm{Ends}}
\newcommand{\GS}{\mathcal{G}_{\Sigma}}
\newcommand{\Cut}{\mathrm{Cut}}
\newcommand{\righte}{\overrightarrow{e}}
\newcommand{\lefte}{\overleftarrow{e}}
\newcommand{\rightf}{\overrightarrow{f}}
\newcommand{\rightp}{\overrightarrow{p}}
\newcommand{\leftp}{\overleftarrow{p}}
\title{A full characterization of invariant embeddability of unimodular planar graphs}
\author{Ádám Timár and László Márton Tóth}
\date{}
\begin{document}

\maketitle
\let\thefootnote\relax\footnotetext{\footnotesize{\it{Keywords and phrases:} \rm{unimodular random maps, invariant planar embedding, locally finite embedding, excluded minors}
}}

\begin{abstract}
When can a unimodular random planar graph be drawn in the Euclidean or the hyperbolic plane in a way that the distribution of the random drawing is isometry-invariant? 
This question was answered for one-ended unimodular graphs in \cite{benjamini2019invariant}, using the fact that such graphs automatically have locally finite (simply connected)
drawings into the plane. For the case of graphs with multiple ends the question was left open. We revisit Halin's graph theoretic characterization of graphs that have a locally finite embedding into the plane.
Then we prove that such unimodular random graphs do have a locally finite invariant embedding into the Euclidean or the hyperbolic plane, depending on whether the graph is amenable or not.
\end{abstract}

\section{Introduction}

Consider a random planar map embedded in the Euclidean or hyperbolic plane $M$ with an isometry-invariant distribution. Simple examples include a lattice shifted by a suitable random isometry or a Voronoi tessellation coming from some invariant point process in $M$ \cite{moller2012lectures, benjamini2001percolation}.
If the expected number of vertices in a unit area is finite, then one can condition on having a vertex in a fixed point $0$ of $M$, and define the Palm version, which is hence a random {\it rooted} graph embedded in the plane. This rooted graph is always unimodular (in the sense defined by Aldous and Lyons in \cite{aldous2007processes}), even if the embedding is also taken into account as a decoration (marking) of the rooted graph. (For Euclidean spaces a proof using the terminology of this present paper can be found in \cite{benjamini2019invariant}, but this is a basic result in the theory of point processes, where unimodularity is replaced by point-stationarity \cite{heveling2005characterization, chiu2013stochastic}.) It is natural to ask whether the converse is true: {\it does every unimodular random planar graph $(G,o)$ have an isometry-invariant embedding into $M$, with a suitable choice of $M$ as the Euclidean or hyperbolic plane?} By a unimodular random planar 
graph (URPG) we mean a unimodular random graph that is planar (has a proper embedding in the plane) almost surely. 

To break the above question into two parts, when does $(G,o)$ have a unimodular embedding into $M$ (where a {\it unimodular embedding}, to be formally defined later, is one where the embedding as a decoration of the graph makes it a unimodular decorated graph), with the root $o$ is embedded at the origin $0 \in M$? And can such an embedding be used to define an isometry-invariant embedding? The present paper fully answers both questions. In \cite{benjamini2019invariant}, the question of unimodular embeddings of {\it one-ended} URPG's was answered, always in the affirmative, with amenable $G$'s embeddable into the Euclidean plane and nonamenable $G$'s into the hyperbolic plane. Our main contribution here is the solution of the question of unimodular embeddings of URPG's with two or infinitely many ends, whenever it is possible. We also construct isometry invariant embeddings from unimodular ones, which settles the question for invariant embeddings.

The problem in the present setup is somewhat more delicate than in the one-ended case, because the existence of an embedding depends on the graph structure. We mention that 2-ended unimodular random graphs are always invariantly amenable, while infinitely ended ones are always invariantly non-amenable. (See the next section for the definitions of invariant amenability/non-amenability, which will nevertheless not be needed until Section \ref{section:noembedding}.) This will imply that in the former case embeddings into the Euclidean plane are meant, while in the latter case embeddings into the hyperbolic plane.

For the special situation when $G$ is transitive, the expression of ``an isometry-invariant embedding of $G$'' makes sense right away: we want the random embedded graph to be isomorphic to this fixed graph almost surely. (This is how the term is used in \cite{timar2017nonamenable}.) However, when the given rooted graph $(G,o)$ is in fact random, one has to assign a root to the embedded graph to be able to ask for an almost sure rooted isomorphism. This is why it was essential to introduce the Palm version above, and for that definition to work, we need the set of embedded vertices, as a point process, to have finite intensity. If the vertices of a random embedded graph have some accumulation point, then the corresponding invariant point process has infinite intensity. In particular, if every embedding of a given URPG has some accumulation point, then the above question of isometry-invariant embeddings is not defined for that graph. Let us mention that there exist random graphs embedded in the Euclidean plane in an invariant way such that it is not possible to assign to them a root so that the resulting rooted graph is unimodular. See Example \ref{meglepo}. This shows that defining an invariant embedding of a URPG through the Palm version and hence the requirement of finite intensity, is not the artifact of a possibly wrongly chosen definition, but seems to be the most general and natural choice. 

Say that an embedding into a surface is \emph{locally finite} if every compact set is intersected by only finitely many embedded edges.  A point of the surface is an {\it accumulation point} if all its neighborhoods intersect infinitely many embedded edges.
As we will see later, it is easy to find examples of URPG's 
that do not have a locally finite embedding into the plane, and even for those that do, it is a priori unclear whether such an embedding can be chosen to be unimodular. We are led to the following questions:
\begin{enumerate}
\item Give some combinatorial characterization of infinite locally finite graphs that have a locally finite embedding in the plane.
\item Characterize those unimodular random planar graphs where there is a {\it unimodular} embedding into $\R^2$ or $\H^2$ with the above property, and construct such an embedding. Similarly, find an isometry-invariant embedding.
\end{enumerate}
We will fully answer both questions. For the first one, a  characterization is given not only for the plane, but other orientable surfaces as well (Theorem \ref{theorem:simply_connected_comb_embedding}). The planar case itself (Theorem \ref{starminor}) was already characterized by Halin \cite{halin1966haufungspunktfreien}. These results are analogues of Kuratowski's theorem with minors replaced by ``minors that can use infinity''.

For the second problem, having a locally finite (not necessarily unimodular) embedding 
will turn out to be equivalent to having a unimodular {\it combinatorial} embedding in the plane with no accumulation points (a notion yet to be defined in the context of combinatorial embeddings), see Theorem \ref{theorem:unimodular_combinatorial_embedding_1_acc}. These are further equivalent to having a locally finite unimodular embedding into the Euclidean plane or the hyperbolic plane, which will be equivalent to having an isometry-invariant embedding (Theorem \ref{thm:invariant_and_unimodular_embedding}).
Another variant of our result says that having a one-ended URPG as a unimodular supergraph is equivalent to having a (locally finite) invariant embedding into the Euclidean or hyperbolic plane (combine Theorem \ref{supergraph} with Theorem \ref{thm:invariant_and_unimodular_embedding} for one direction, and Proposition \ref{prop:triangularization} for the other).

When there is a positive answer to the second problem, our starting point is the construction in \cite{timar2023unimodular} for a unimodular {\it combinatorial} embedding of a URPG. A combinatorial version of ``locally finite'' has to be introduced, and we modify the construction of \cite{timar2023unimodular} to make sure that the unimodular combinatorial embedding is locally finite whenever $G$ has a locally finite embedding at all (Theorem \ref{theorem:unimodular_combinatorial_embedding_1_acc}). After obtaining the unimodular locally finite combinatorial embedding, one has to realize it as an actual locally finite embedding into the plane. At this point, the metric imposed on the plane needs to be clarified, to make sense of what is meant by the embedding to be unimodular. The two natural choices are the Euclidean and the hyperbolic metric. Then a dichotomy result holds (Theorem \ref{thm:invariant_and_unimodular_embedding}),
similarly to \cite{benjamini2019invariant}.

\subsection{Our main results} 
As before, let $M$ denote either $\R^2$ or $\H^2$. A \emph{drawing} of a graph $G$ on $M$ is a locally finite embedding $\iota$ of $G$ to $M$, viewed up to isometries of $M$. A \emph{rooted drawing} is a drawing together with a distinguished vertex. We denote by $\RD (M)$ the space of rooted drawings of locally finite graphs on $M$, with a suitable topology given by ``local closeness'' in a neighborhood of the root. A probability measure $\mu$ on $\RD(M)$ is \emph{unimodular}, if it satisfies the appropriate Mass Transport Principle. We give the detailed definitions in Subsection \ref{subsec:unimodular_embeddings}. 
 
A \emph{unimodular embedding} of a URPG $(G,o)$ into $M$ is a unimodular measure $\mu$ on $\RD(M)$, such that forgetting the embedding of a $\mu$-random element 
(and taking the rooted-isomorphism class of the resulting rooted graph)
gives $(G,o)$ in distribution. This was first defined in \cite{angel2018hyperbolic}; a related notion of unimodular discrete spaces is studied in \cite{baccelli2021unimodular}.

Let $\mathfrak{G}$ denote a random  embedded unrooted graph on $M$ with $\Isom(M)$-invariant distribution. (Both the graph and its embedding can be random.) The \emph{intensity} of $\mathfrak{G}$ is $p(\mathfrak{G})=\E [ |V(\mathfrak{G}) \cap B|]$, where $B \subset M$ is an arbitrary measurable set of area $1$. When the intensity is finite, one
can define the \emph{Palm version} $\mathfrak{G}^*$ of $\mathfrak{G}$, by conditioning on $0 \in V(\mathfrak{G}))$, where $0$ is the origin in $M$. By standard theory of point processes this makes sense (\cite[Chapter 9]{last2017lectures}), and $\mathfrak{G}^*$ defines a random rooted drawing, with the distinguished vertex at $0$. 

An \emph{invariant embedding} of a URPG $(G,o)$ into $M$ is an invariant random locally finite embedded unrooted graph $\mathfrak{G}$ such that the Palm version of $\mathfrak{G}$ is a unimodular embedding of $(G,o)$. So when invariant embeddings of URPG's are considered, finite intensity is inherent.
 
It is clear that in order to find either a unimodular or an invariant embedding of a URPG without accumulation points, we need to assume that it is supported on graphs that have a locally finite planar embedding. We show that under this assumption a URPG has an invariant or unimodular embedding of finite, positive intensity into either the Euclidean or the hyperbolic plane, depending on whether it is (invariantly) amenable or not. (So far intensity only has a meaning for invariant embeddings, but the intensity of a unimodular embedding will also be defined, in Subsection \ref{subsection:invariant_from_unimodular}.) 
 We say that a URPG $(G,o)$ has {\it finite expected degree} if the expected degree of $o$ is finite.

\begin{theorem} \label{thm:invariant_and_unimodular_embedding} 
Let $(G,o)$ be a URPG with finite expected degree, and assume that $G$ has a locally finite embedding into the plane with probability one. Then $(G,o)$ has an invariant embedding and also a unimodular embedding with positive intensity and no accumulation point into 
\begin{itemize}
\item the Euclidean plane if and only if $(G,o)$ is invariantly amenable;
\item the hyperbolic plane if and only if $(G,o)$ is invariantly non-amenable.
\end{itemize}
\end{theorem}

%

Theorem \ref{thm:invariant_and_unimodular_embedding} is an extension of similar results in \cite{benjamini2019invariant}. Here we do not require our URPG to be one-ended, replacing that assumption with having a locally finite embedding. Furthermore, the invariant embedding in the non-amenable case was not investigated in \cite{benjamini2019invariant} (and the method of obtaining an invariant embedding from a unimodular one in the hyperbolic plane is not straightforward). 

To better understand the assumptions of Theorem \ref{thm:invariant_and_unimodular_embedding}, one can ask for a graph theoretic characterization of infinite locally finite graphs that have a locally finite embedding in the plane. Such a characterization is given by Halin in \cite{halin1966haufungspunktfreien}, see \cite[Theorem 1.1]{bonnington2003graphs} for a statement in English. Suppose $G$ is infinite. Loosely speaking, we say that a finite graph $H$ is a $*$-minor of $G$ if we can find $H$ as a minor of $G$ with one of the vertices of $H$ being ``at infinity'' in $G$. 
See Definition \ref{definition:star_minor}. 

\begin{theorem}[Halin]\label{starminor}
An infinite locally finite planar graph has a locally finite planar embedding if and only if it does not have $K_{3,3}$ or $K_5$ as a $*$-minor.
\end{theorem}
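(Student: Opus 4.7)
For the forward direction, suppose $G$ has a locally finite embedding $\iota: G \to \R^2$, and suppose for contradiction that some $H \in \{K_{3,3}, K_5\}$ is a $*$-minor of $G$. Unpacking Definition \ref{definition:star_minor}, I would obtain finite connected branch sets $U_1, \dots, U_{k-1} \subset V(G)$ for the ordinary vertices of $H$, branch edges of $G$ realising the edges of $H$ among these vertices, and, for each edge of $H$ incident to the vertex ``at infinity'' $v^*$, an infinite ray in $G$ starting in the corresponding $U_i$, with all these rays pairwise disjoint and disjoint from the branch sets. I then pass to the one-point compactification $S^2 = \R^2 \cup \{\infty\}$. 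Local finiteness of $\iota$ forces each such infinite ray's image to eventually leave every compact subset of $\R^2$, so its closure in $S^2$ is an arc terminating at $\infty$. Identifying $\infty$ with $v^*$ and keeping the finite branch sets and remaining branch edges as they are, I obtain a topological embedding of $H$ in $S^2$, contradicting the non-planarity of $K_{3,3}$ and $K_5$.

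For the reverse direction, assume $G$ is planar with no $K_{3,3}$ or $K_5$ as a $*$-minor; I will construct a locally finite embedding. My plan is to use a finite exhaustion $V(G_1) \subset V(G_2) \subset \dots$ of $V(G)$ by finite connected subgraphs and inductively build an embedding of each $G_n$ into a closed disk $D_n \subset \R^2$, together with an assignment of the (finitely many) infinite connected components of $G \setminus V(G_n)$ to pairwise disjoint arcs on $\partial D_n$. The disks $D_n$ are chosen to nest, and when extending from $D_n$ to $D_{n+1}$ each boundary arc is further subdivided according to the infinite components of $G \setminus V(G_{n+1})$ that lie inside the corresponding old component. Taking the union over $n$ yields the desired embedding, sending each end of $G$ to a unique point at infinity of $\R^2$ and hence having no accumulation.

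The heart of the argument, and the part I expect to be hardest, is the inductive extension step: I need to show that under the no-$*$-minor hypothesis, the local combinatorial data at stage $n$ (the way new vertices of $G_{n+1}$ connect to each other and to each old infinite component) admits a planar realisation in the disk consistent with the existing arc-assignment. A potential obstruction would be three essentially disjoint paths in $G_{n+1}$ connecting different infinite components of $G \setminus V(G_{n+1})$ in a pattern incompatible with planarity in the disk; combining such a pattern with rays into two of the infinite components would exhibit $K_{3,3}$ or $K_5$ as a $*$-minor of $G$, contrary to our assumption. An alternative route that may be smoother is to first construct a rotation system for $G$ using a variant of the combinatorial embedding of \cite{timar2019unimodular}, and to show directly that the no-$*$-minor hypothesis forces every face of this rotation system to be a finite cycle (an infinite face boundary would split $G$ into two ``sides'' interacting through the infinite walk, and this interaction would expose a $K_{3,3}$ or $K_5$ $*$-minor). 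A locally finite topological embedding is then obtained by realising each finite face as a topological disk and gluing compatibly.
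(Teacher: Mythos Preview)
Your forward direction is correct and essentially matches the paper's argument (phrased there as: all neighbours of the ``infinite'' vertex lie on the boundary of the single face of $H\setminus v$ containing the accumulation point, so $v$ can be reinstated there).

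For the reverse direction you have the right skeleton (finite exhaustion, embed each piece with the ``infinite'' components controlled along the outer boundary), but you are missing the one clean idea that makes the induction go through, and your extension step as written does not work without an additional compactness argument.

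The key trick you are missing is this: given a finite $G_i$ (chosen so that all components of $G\setminus G_i$ are infinite), let $V^*(G_i)$ be the set of vertices of $G_i$ adjacent to $G\setminus G_i$, and form the finite graph $G_i^+$ by adding a single auxiliary vertex $p_i$ joined to every vertex of $V^*(G_i)$. The crucial observation is that a $K_5$ or $K_{3,3}$ minor in $G_i^+$ translates \emph{directly} into a $*$-minor of $G$: the branch set containing $p_i$ is replaced by the union of the (infinite) components of $G\setminus G_i$ attached to it. Hence the no-$*$-minor hypothesis forces each $G_i^+$ to be planar, which is exactly the statement that $G_i$ has a planar embedding with all of $V^*(G_i)$ on a single face. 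This replaces your vague ``three essentially disjoint paths \ldots\ would exhibit a $*$-minor'' with a one-line reduction to ordinary Kuratowski.

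Second, your inductive extension is not guaranteed to succeed step by step: a planar embedding of $G_n$ with the correct boundary data need not extend to one of $G_{n+1}$. The paper handles this by noting that each $G_i$ has only finitely many planar embeddings up to homeomorphism, and then uses K\"onig's lemma to select a coherent sequence of embeddings of the $G_i$ before realising them inside nested disks. Your write-up needs this compactness step.

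Finally, your alternative route is off: a locally finite planar embedding can have infinite faces (e.g.\ $\Z$ embedded as a line), so forcing every face of a rotation system to be a finite cycle is too strong a condition and not what the theorem asserts. And the phrase ``sending each end of $G$ to a unique point at infinity of $\R^2$'' is confused: there is only one point at infinity, and the whole content of the theorem is that \emph{all} ends are sent there.
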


In fact we proved a version of this statement for embedding graphs in orientable surfaces, and only found Halin's theorem upon review of the present paper. See Theorem \ref{theorem:simply_connected_comb_embedding} for our more general result.

For completeness, let us state the $*$-minor characterizations of invariant/unimodular embeddability as a separate theorem. This is straightforward
from Theorem \ref{starminor} and Theorem \ref{thm:invariant_and_unimodular_embedding}.

\begin{theorem}\label{amen_and_nonamen_emb}
Let $(G,o)$ be an invariantly amenable (respectively, invariantly non-amenable) URPG with finite expected degree. Then 
\begin{itemize}
\item $(G,o)$ has a unimodular embedding of finite positive intensity into $\R^2$ (respectively, into $\H^2$) if and only if it does not have $K_{3,3}$ or $K_5$ as a $*$-minor. 
\item It has an invariant embedding of positive intensity into $\R^2$ (respectively, into $\H^2$) if and only if it does not have $K_{3,3}$ or $K_5$ as a $*$-minor. 
\item It has no unimodular embedding of positive intensity and no invariant embedding into $\H^2$ (respectively, into $\R^2$).
\end{itemize}
\end{theorem}


As mentioned at the beginning, most of Theorem \ref{thm:invariant_and_unimodular_embedding} was already proved in \cite{benjamini2019invariant} for the case when $G$ is almost surely one-ended. Our next theorem shows that one-endedness is, in a sense, the real reason for a graph to be unimodularly/invariantly embeddable in the plane. 

\begin{theorem}\label{supergraph}
Let $(G, o)$ be a URPG with finite expected degree. Then the following are equivalent.
\begin{enumerate}
\item[(1)] $G$ has a locally finite planar embedding almost surely, and hence Theorem \ref{thm:invariant_and_unimodular_embedding} applies.
\item[(2)] $G$ has a unimodular one-ended planar supergraph.
\end{enumerate}
\end{theorem}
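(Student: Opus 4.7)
For the direction $(2) \Rightarrow (1)$, suppose $H$ is a one-ended planar URPG supergraph of $G$. A standard deterministic fact recalled in the introduction and used throughout \cite{benjamini2019invariant} says that every one-ended locally finite planar graph admits a locally finite planar embedding. Applying this almost surely to $H$ produces a locally finite planar embedding of $H$, which restricts to a locally finite planar embedding of $G$ (its vertices embedded as in $H$, and its edges embedded as a subset of the embedded edges of $H$).

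For the main direction $(1) \Rightarrow (2)$, the plan is to build $H$ by adding edges to $G$ in a unimodular, planarity-preserving way so that every combinatorial face becomes bounded. First, I would apply Theorem \ref{theorem:unimodular_combinatorial_embedding_1_acc} to equip $(G,o)$ with a unimodular combinatorial embedding $\sigma$ having no accumulation points. Second, I would invoke Proposition \ref{prop:triangularization} to add edges within each face of $\sigma$ in a unimodular, invariant manner, producing a URPG $H \supseteq G$ whose combinatorial embedding has only bounded faces. Third, I would observe that such $H$ must be one-ended: an infinite connected locally finite planar graph carrying a combinatorial embedding with no accumulation points and only bounded faces realizes the whole plane as a CW-complex, and the complement of any finite vertex set in the plane has a unique unbounded component, so only one end of $H$ can escape that compact set.

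The main obstacle is the triangulation step, specifically the handling of infinite faces of $\sigma$ in Proposition \ref{prop:triangularization}. Finite faces can be closed up by simple local rules based on the cyclic order $\sigma$ and canonical tie-breaking among boundary vertices. Infinite faces, however, are exactly the combinatorial obstructions to one-endedness; they must be partitioned into bounded pieces by infinitely many added edges in a coherent, unimodular, planar, locally finite fashion. A natural approach is to exploit the structure of an infinite face boundary — which, in the absence of accumulation points, is a well-behaved infinite walk — and add zig-zag chords between its two sides according to an invariant rule derived from $\sigma$, ensuring planarity, local finiteness, and compatibility with the Mass Transport Principle.
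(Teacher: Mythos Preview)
Your proposal is correct and follows the paper's route: the paper's entire proof is the one line ``It is a straightforward corollary of Proposition~\ref{prop:triangularization}.'' However, you are adding redundant scaffolding around that citation. Proposition~\ref{prop:triangularization} already takes as input a URPG with a locally finite planar embedding and outputs a one-ended planar triangulation $(G^+,o^+)$ containing $G$; so your step~1 (applying Theorem~\ref{theorem:unimodular_combinatorial_embedding_1_acc}) is internal to the \emph{proof} of that proposition, your step~3 (deducing one-endedness from bounded faces) is already part of its \emph{conclusion}, and the ``main obstacle'' you describe (triangulating infinite faces unimodularly) is precisely the work done inside Proposition~\ref{prop:triangularization}, not a gap you need to fill here. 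Once you invoke the proposition as a black box, $(1)\Rightarrow(2)$ is immediate, and your argument for $(2)\Rightarrow(1)$ via restricting a locally finite embedding of the one-ended supergraph is the correct easy direction.
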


\begin{proof}
It is a straightforward corollary of Proposition \ref{prop:triangularization} below.
\end{proof}

Our constructions for the embeddings in Theorem \ref{thm:invariant_and_unimodular_embedding} will follow the strategy adopted in \cite{benjamini2019invariant}, once the following is established. 

\begin{proposition} \label{prop:triangularization}
Let $(G, o)$ be a URPG with finite expected degree that has a locally finite planar embedding almost surely. Then there is a decorated URPG $(G^+, o^+; S)$, with a connected, positive density subgraph $S$ of $G^+$,  such that
$(G^+,o^+)$ is a planar triangulation of finite expected degree, and $G^+$ has one end. (Positive density means $\P[o^+ \in S]>0$.) Furthermore, conditioned on $o^+\in S$, $(S,o^+)$ is distributed as $(G,o)$, and $(G^+, o^+)$ is invariantly amenable if and only if $(G,o)$ is.
\end{proposition}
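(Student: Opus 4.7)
The plan is to construct $G^+$ by first fixing a unimodular locally finite combinatorial embedding of $G$ and then triangulating every face of that embedding, yielding a locally finite planar triangulation that tessellates all of $\R^2$; this tiling property will automatically force one-endedness.

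First, I would invoke Theorem \ref{theorem:unimodular_combinatorial_embedding_1_acc} to obtain a unimodular locally finite combinatorial embedding of $(G,o)$ without accumulation points. The faces of this embedding come in two types: finite faces bounded by a closed walk, and infinite faces bounded by a bi-infinite walk. For each finite face $F$ of length $k$, place a new vertex $v_F$ in the interior of $F$ and connect it to every vertex on the boundary walk with the appropriate multiplicities, producing $k$ triangles. For each infinite face $F$, inscribe a canonical locally finite triangulation of $F$ that fills $F$ as a region of the plane; such a triangulation is built deterministically from the combinatorics of the boundary walk, for instance by attaching nested fans whose apexes form a tree-like spine extending to a single end inside $F$. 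Let $G^+$ denote the resulting graph and let $S$ be the copy of $G$ inside $G^+$.

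Every face of $G^+$ is then a bounded triangle and the closed triangles tessellate $\R^2$; from this it follows by a standard argument that $G^+$ has one end, since removing any finite vertex set leaves a triangulation of the complement of a compact region of $\R^2$, which remains connected. Unimodularity of $(G^+, o^+; S)$ is inherited from that of the combinatorial embedding of $G$, because each face filling is a canonical function of the local combinatorial data; in particular, conditioning on $o^+ \in S$ returns $(G,o)$ in distribution. Finite expected degree follows from a mass transport argument: the contribution of finite-face filling vertices to the expected degree is bounded by $\E[\deg_G(o)]$ via the transport sending unit mass from each boundary vertex to the filling vertex of its incident face, while the infinite-face filling is arranged to add $O(1)$ edges per boundary vertex. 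Invariant amenability is preserved in both directions: restricting a F\o{}lner sequence of $G^+$ to $S$ yields an approximate F\o{}lner sequence of $G$, while a F\o{}lner sequence of $G$ can be thickened by the filling vertices in adjacent finite faces together with a controlled layer of the infinite-face fillings to produce one in $G^+$.

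The main obstacle is giving an invariant, locally finite triangulation of an arbitrary infinite face while keeping the added degree per boundary vertex bounded. Infinite faces can have complicated boundary walks, and any rule for triangulating them must depend only on local combinatorial information (to preserve unimodularity) and simultaneously fill the face (so that $G^+$ tiles $\R^2$ and becomes one-ended). Carrying this out while keeping the expected degree finite is the central technical point.
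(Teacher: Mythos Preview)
Your approach is essentially the same as the paper's: invoke Theorem~\ref{theorem:unimodular_combinatorial_embedding_1_acc} to get a unimodular combinatorial embedding with $\acc(G,\pi)\leq 1$, then triangulate the faces. The paper does not spell out the triangulation step at all; it simply cites \cite[Theorem~2.2]{benjamini2019invariant}, which already carries out the face-filling (including the handling of infinite faces, the finite expected degree via mass transport, and the preservation of invariant amenability). So the ``main obstacle'' you flag is precisely what that reference is invoked for, and you should consult it rather than reinvent the construction.

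There is one genuine difference worth noting. For one-endedness you argue geometrically: the closed triangles tessellate $\R^2$, hence removing a finite set leaves the complement of a compact region, which is connected. The paper instead argues purely combinatorially, never passing to an actual realization in $\R^2$: if $G^+$ had two ends, some finite $F\subset G^+$ would separate two infinite components; since $G^+$ is a triangulation, vertices surrounded by the same face of $F$ lie in the same component of $G^+\setminus F$, forcing two faces of $F$ to surround infinite parts and hence $\acc(G^+,\pi^+)\geq 2$, contradicting $\acc\leq 1$. Your route is fine in spirit but requires first producing a locally finite embedding into $\R^2$ whose faces are bounded triangles, which is an extra (and not entirely trivial) step; the paper's argument avoids this by staying at the level of the combinatorial embedding.

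Two minor cautions. First, your claim that the infinite-face filling is ``a canonical function of the local combinatorial data'' is loose: the filling of an infinite face is not determined by any bounded neighborhood. What you actually need (and what suffices) is that the whole construction is an isomorphism-equivariant measurable function of $(G,\pi)$, i.e.\ a graph factor; that is how unimodularity is inherited. Second, your F{\o}lner-sequence sketch for amenability preservation is plausible but would require care to make rigorous in the unimodular (invariant amenability) sense; the cited reference handles this directly.
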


The definition of unimodularity for decorated graphs is introduced in Subsection \ref{subsec:unimodularity}. Proposition \ref{prop:triangularization} is a generalization of Theorem 2.2 in \cite{benjamini2019invariant}. It will follow from Theorem \ref{theorem:unimodular_combinatorial_embedding_1_acc} presented below, which is the main novelty and most laborsome result of this paper.

Already in the one-ended case a key ingredient of the proof is to construct \emph{combinatorial embeddings} for URPG's in a unimodular way, see \cite{timar2023unimodular}. Combinatorial embeddings are thoroughly discussed in Section \ref{section:prelim}. For now it is enough to know that a combinatorial embedding is a decoration of the vertices by finite sets of labels, where each label describes the cyclic order of edges around the vertex in an embedding, and together the labels encode the embedding of the graph up to homeomorphisms. 
We will also define the number $\acc(G, \pi)$ of accumulation points of a graph $G$ with a combinatorial embedding $\pi$, see Definition \ref{def:comb_acc}. The difficulty in our case is coming from the fact that the unimodular combinatorial embedding constructed in \cite{timar2023unimodular} can have $\acc(G, \pi) > 1$ for graphs with more than one end. Here we give a different, more careful construction.

\begin{theorem} \label{theorem:unimodular_combinatorial_embedding_1_acc}
Let $(G,o)$ be a URPG with finite expected degree, which has a locally finite planar embedding almost surely. Then $(G,o)$ has a unimodular random combinatorial embedding $\pi$ with $\acc(G, \pi) \leq 1$ almost surely. 
\end{theorem}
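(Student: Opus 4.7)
The target is a unimodular combinatorial embedding $\pi$ whose induced face structure has at most one non-disk (``infinite'') face. Such a $\pi$ is the combinatorial shadow of a locally finite embedding of $G$ into $\R^2$, equivalently of an embedding into $S^2$ in which $\infty$ is the sole accumulation point, obtained by one-point compactification. Since such a topological embedding exists almost surely by hypothesis, the task is to produce its combinatorial shadow \emph{unimodularly}, unlike the construction of \cite{timar2019unimodular}, which can produce one infinite face per end of $G$.

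The plan is to build the rotation system directly on top of a unimodular scaffolding. First I would construct, unimodularly, a nested sequence of finite connected subgraphs $F_1\subset F_2\subset\cdots$ with $\bigcup_n F_n = G$ together with their boundary edge-cuts $C_n = \partial_E F_n$; such exhaustions are available for URPGs of finite expected degree. The rotation at each vertex $v$ is then specified stage by stage: at the first stage $n$ with $v\in F_n$, the edges of $v$ lying in $F_n$ receive the cyclic order coming from a chosen planar embedding of $F_n$, and the edges of $v$ exiting $F_n$ (i.e.\ lying in $C_n$) are grouped contiguously in the rotation into a single ``slot'' that is destined to become the unique infinite face. At later stages, when those exterior edges are swallowed into a larger $F_{n+k}$, they are inserted into the rotation strictly within that slot, preserving the contiguity invariant. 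Theorem \ref{starminor} is the key structural input: the absence of $K_{3,3}$ and $K_5$ as $*$-minors guarantees that at each finite stage the planar embedding of $F_n$ can be chosen so that the complement of $F_n$ lies in a single face of $F_n$, and that these choices can be made compatibly across $n$. Unimodularity of the resulting $\pi$ follows from that of the exhaustion via the Mass Transport Principle, since the rotation at each vertex is a deterministic function of the isomorphism type of the exhaustion-decorated graph near that vertex.

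The main obstacle I anticipate is compatibility between stages: when the rotation at $v$ is updated from stage $n$ to stage $n+1$, the ``slot'' must survive and the chosen planar embedding of $F_{n+1}$ must extend that of $F_n$ in a way that keeps the outer face unique. This will require a careful canonical choice of the planar embedding of each $F_n$ (equivalently, of which face is declared outer), with the $*$-minor hypothesis ruling out obstructions at every finite stage. The hardest step will be proving that the limiting face structure almost surely satisfies $\acc(G,\pi)\leq 1$: I expect this to follow from the unimodular analogue of the classical fact that a nested sequence of outer faces on $S^2$ shrinks to a single accumulation point, combined with a mass-transport argument that bounds the expected number of persistent ``leftover'' infinite faces by zero.
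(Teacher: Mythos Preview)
Your first step already fails: there is no unimodular nested sequence $F_1\subset F_2\subset\cdots$ of \emph{finite} subgraphs exhausting an infinite unimodular random graph. This is a standard Mass Transport obstruction. If $F_1$ were a nonempty finite subgraph determined by the decorated graph (and not by the root), let each vertex send mass $1/|V(F_1)|$ to every vertex of $F_1$. Then the root sends out total mass $1$, but the expected mass it receives is $\P[o\in F_1]\cdot\E\bigl[|V(G)|/|V(F_1)|\,\big|\,o\in F_1\bigr]=\infty$ whenever $G$ is infinite with positive probability. So your scaffolding does not exist, and the claim ``such exhaustions are available for URPGs of finite expected degree'' is simply false; in particular it has nothing to do with finite expected degree. (What \emph{is} sometimes available is a hyperfinite exhaustion by finite equivalence classes, but that exists only in the invariantly amenable case, whereas the theorem must cover non-amenable, e.g.\ infinitely-ended, URPGs.)

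Even setting this aside, the compatibility problem you flag is the real heart of the matter, and ``canonical choice of the outer face of each $F_n$'' does not solve it: a finite planar graph can have many embeddings placing the boundary vertices on a single face, and there is no global deterministic rule to pick one in a way that is both coherent under enlargement and blind to the root. The paper circumvents this entirely. It never builds an exhaustion; instead it uses the canonical block--cut decomposition of $G$ into $2$-connected pieces, and then the Tutte decomposition of each block into $3$-connected pieces, $3$-links and cycles. The crucial structural lemma (Lemma~\ref{lemma:full_core}) is that an infinite $2$-connected planar graph with full core and with some simply connected embedding has, up to a global orientation flip, a \emph{unique} combinatorial embedding with $\acc=1$. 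This uniqueness is what replaces your ``canonical outer face'' and makes the construction root-independent: on the core the embedding is forced (up to a fair coin for $\pm$), off the core finite pieces are embedded uniformly at random, and blocks are glued at cut vertices so that every $\Cut_\infty$ vertex sits on the distinguished/infinite face. Unimodularity then follows because every random choice is a function of the unrooted graph plus independent local randomness, never of the root. Note also that, contrary to your plan, Theorem~\ref{starminor} is not used anywhere in this argument.
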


The paper is structured as follows. Definitions are introduced in Section \ref{section:prelim}, together with some notation. Section \ref{section:surface_graphs} contains a 
graph theoretic characterization of infinite graphs that have some embedding into a given surface with only
one accumulation point, yielding a proof of Theorem \ref{starminor} as a special case.

The proof of our main Theorem \ref{thm:invariant_and_unimodular_embedding}, presented in Section \ref{section:inv_uni_emb}, can be broken down into parts. In the Euclidean case we construct an invariant embedding first, which can be trivially used to define a unimodular embedding (similarly to \cite{benjamini2019invariant}). In the hyperbolic case we are able to construct a unimodular embedding first. Obtaining an invariant embedding from the unimodular one is the nontrivial direction, we prove the necessary  Theorem \ref{theorem:invariant_from_unimodular} in Subsection \ref{subsection:invariant_from_unimodular}. 
Our first step towards a unimodular embedding in the hyperbolic case is to find unimodular {\it combinatorial} embeddings into the plane with one combinatorial accumulation point. That is, we prove Theorem \ref{theorem:unimodular_combinatorial_embedding_1_acc} in Section \ref{section:unimod_combin}. This is the lengthiest and most laborsome section of our paper.

A unimodular combinatorial embedding as 
above can then be used to define an actual unimodular embedding of positive intensity into $\H^2$ when the graph is invariantly non-amenable. This was known before, via circle packings, and we briefly summarize the method in Section \ref{section:inv_uni_emb}. Here we also prove Proposition \ref{prop:triangularization} as a consequence of Theorem \ref{theorem:unimodular_combinatorial_embedding_1_acc}, and the construction parts of Theorem \ref{thm:invariant_and_unimodular_embedding} using Proposition \ref{prop:triangularization} and Theorem \ref{theorem:invariant_from_unimodular}. Finally in Section \ref{section:noembedding} we prove the other direction, the necessary conditions of our embeddability theorems.

\section{Preliminaries} \label{section:prelim}

Our focus in this paper is planar graphs and unimodularity. Nevertheless, throughout Sections \ref{section:prelim} and \ref{section:surface_graphs} we will keep our setup more general, and consider embeddings of arbitrary graphs into orientable surfaces. The proof of Theorem \ref{theorem:unimodular_combinatorial_embedding_1_acc} and as a consequence Theorem \ref{thm:invariant_and_unimodular_embedding} will not make use of any of this general setup.

All graphs studied are assumed to be connected with at least 2 vertices. Disconnected graphs only appear when we delete subgraphs to study how the components of the remainder behave. $\mathcal{C}_G(x)$ denotes the connected component of the vertex $x$ in the graph $G$.

Let $H' \subseteq H$ be a subgraph. The graph $H \setminus H'$ is obtained by removing the vertices of $H'$ and all adjacent edges from $H$. For a vertex $v \in V(H)$ let $N_H(v)$ denote the neighbors of $v$ in $H$.

To remove a set of edges $E \subseteq E(H)$ we also write $H \setminus E$. In this case all vertices of $H$ are kept, only the edges are deleted.

\subsection{Unimodular random graphs}
\label{subsec:unimodularity}

A \emph{random rooted graph} is a probability measure $\nu$ on $\mathcal{G}_{\bullet}$, the space of rooted, connected, locally finite graphs, considered up to rooted isomorphisms. The set $\mathcal{G}_{\bullet}$ is a locally compact Polish space, the topology is defined by the rooted distance $d_r$, where
\[d_r\big((G_1,o_1),(G_2,o_2) \big) = \frac{1}{2^{k+1}}, \textrm{ where } k=\sup \big\{i \in \N ~\big|~ B_{G_1}(o_1,i) \cong B_{G_2}(o_2,i)\big\}.\]

We say $\nu$ is \emph{unimodular}, if it satisfies the Mass Transport Principle \cite{aldous2007processes}. That is, for any measurable function $f:\mathcal{G}_{\bullet \bullet} \to [0,\infty)$ on the space $\mathcal{G}_{\bullet \bullet}$ of connected, locally finite birooted graphs (up to birooted isomorphisms) we have

\[\int_{(G,o)} \sum_{u \in V(G)} f(G,o, u) \ d \nu(G,o) = \int_{(G,o)} \sum_{u \in V(G)} f(G,u, o) \ d \nu(G,o).\]

The definition can be repeated when vertices or edges of the graph are decorated by elements of a complete separable metric space. See \cite{aldous2007processes} or \cite[Chapter 18.3]{lovasz2012large} for details. Usually we write $(G,o)$ for a unimodular random rooted graph, meaning implicitly that its distribution is a unimodular measure. When a decoration, like a function $f$ on edges or vertices, or a subset $U$ of edges or vertices is present, we write $(G,o;f)$ or $(G,o;U)$.
An equivalent formulation of unimodularity is by involution invariance. Assume $(G,o)$ has finite expected degree, and let $(G',o')$ result from $(G,o)$ after biassing by the degree of $o$. Let $(G',o',o'')$ denote the random birooted graph obtained by taking a uniform random neighbor $o''$ of $o'$ in $G'$. The random rooted graph $(G,o)$ is unimodular if and only if the distribution of $(G',o',o'')$ is the same as the distribution of $(G',o'',o')$. The same equivalence holds for decorated graphs.

We say a unimodular random graph $(G,o)$ is \emph{invariantly amenable}, if for every $\varepsilon >0$ there is a random subset $U \subseteq V(G)$ such that $(G,o;U)$ is unimodular, every component of $G \setminus U$ is finite, and $\P[o \in U] < \varepsilon$. (See \cite{aldous2007processes} for equivalent definitions.) Otherwise it is called \emph{invariantly non-amenable}. For simplicity from now on we will refer to these as amenable and non-amenable. As we only consider unimodular random graphs in this paper, this should cause no confusion. 

\subsection{Embeddings of arbitrary graphs}
For the rest of this Section \ref{section:prelim} let $G$ be a connected, locally finitely graph and $\Sigma$ a closed, connected, and orientable surface. An \emph{embedding} of $G$ into $\Sigma$ is a map $\iota$ that maps vertices of $G$ to distinct points of $\Sigma$ and edges $e$ with endpoints $x$ and $y$ to arcs between $\iota(x)$ and $\iota(y)$ such that no inner point of these arcs is contained in another arc. We denote by $\iota(G)$ the union of all these vertices and arcs in $\Sigma$. We call the connected components of $\Sigma \setminus \iota(G)$ the \emph{topological faces} of $\iota(G)$. 
We say the embedding is \emph{amicable}, if for any finite subgraph $F$ of $G$, the topological faces of $\iota(F)$ are the interiors of compact orientable surfaces with boundary. All embeddings considered in this paper are assumed to be amicable. For planar embeddings in this paper we also assume amicability, by which we mean amicability of the embedding as viewed to be mapping into the one-point compactification $S^2$.

A few remarks are in order:

\begin{itemize}
\item We do not require the the embedding to be \emph{cellular}, i.e.\ the topological faces of $F$ do not have to be disks. In the planar case, which is our main interest, amicable embeddings are automatically cellular. In general however, topological faces of $F$ are assumed to be of the form $\Sigma' \setminus (D_1 \cup \ldots \cup D_k)$, where $\Sigma'$ is a closed orientable surface and the $D_j \subseteq \Sigma'$ are disjoint closed disks. This makes our setup somewhat different from the ones discussed in \cite{lando2013graphs} and \cite{mohar2001graphs}. The reason for our choice is that we want to make sure that graphs that can be embedded into a fixed surface form a minor-closed family. This is not the case if we require all topological faces to be disks. The only place where this remark applies is Theorem \ref{theorem:simply_connected_comb_embedding}, where we go beyond the planar setup.

\item In the definition of amicability, we do not impose our topological requirement on topological faces of infinite subgraphs, because we want to allow examples with ``infinite faces'', like the natural embedding of $\Z$ into $\R^2$. The complement of $\Z$ on $S^2$ is connected, but not homeomorphic to an open disk. The requirement on the finite subgraphs will give us the right generality and enough control.

\item When considering planar embeddings of infinite graphs on $S^2$, the embedding is of course not locally finite anymore. 
Nevetheless, such locally finite planar embeddings are in one-to-one correspondence with embeddings into $S^2$ with exactly one accumulation point (which point is not in the image of any edge or vertex by the embedding). We will reserve the term  \emph{simply connected}, for such embeddings into $S^2$.  So ``simply connected'' and ``locally finite planar'' embeddings are the same, but we will use the two terminologies to indicate the space we are embedding into.
\end{itemize}

The \emph{graph boundary} of a topological face of $G$ (with respect to $\iota$) is the subgraph that is mapped onto the topological boundary of the face by $\iota$. So while a topological face of $G$ is a domain in $\Sigma$, the graph boundary of a topological face in $G$ is a subgraph of $G$. (The term ``simply connected'', frequently used in the literature, indicates that the union of the closures of the topological faces with finite graph boundary is a simply connected topological space.)

\subsection{Combinatorial embeddings} \label{subsection:combinatorial_embeddings}
In this subsection we introduce some notions from the theory of graphs on surfaces. See \cite[Chapter 1.3.3]{lando2013graphs}, \cite[Chapter 3.1]{mohar2001graphs} or \cite{beineke2009topics} for more details on the topic, but keep in mind that our setup is slightly different, because we do not assume embeddings to be cellular.

Fix an embedding $\iota$ of $G$ into $\Sigma$. For each edge $e$ of $G$ introduce two oriented edges $\righte$ and $\lefte$, or \emph{darts} in opposite directions. We denote by $s(\righte)$ and $t(\righte)$ the source and terminal vertex of a dart. When the edge is a loop it still gives two opposing darts, but in this case the source and terminus are the same.

The orientation of the surface defines a cyclic permutation $\pi_v$ at every vertex $v \in V(G)$ of the darts leaving that vertex. The collection of all these permutations $(\pi_v)_{v \in V(G)}$ is the \emph{combinatorial embedding} of $G$ corresponding to $\iota$. In general such a collection of cyclic permutations is called a combinatorial embedding, if there is some $\iota$ that realizes it.

We go on to explore how this combinatorial data can be used to reconstruct the embedding when $G$ is finite.

Given a graph $G$ with a combinatorial embedding $(\pi_v)_{v \in V(G)}$ into $\Sigma$ we can define the \emph{combinatorial faces} of $G$. A combinatorial face is a cyclically ordered tuple of darts $(\righte_1, \ldots, \righte_n)$ with $t(\righte_i)=s(\righte_{i+1})=v_i$ and such that the dart $\righte_{i+1}$ is the successor of the dart $\lefte_{i}$ according to the permutation $\pi_{v_i}$. We understand $i+1$ cyclically, so $\righte_{n+1}=\righte_{1}$. 
We can think of a combinatorial face as a returning walk along oriented edges, always continuing on the unique succeeding dart at every vertex. Note that a combinatorial face might contain both darts belonging to the same edge. When $G$ is finite, for every dart $\righte$ there is a unique combinatorial face that contains it. We will revisit the infinite case in subsection~\ref{subsection:infinite_blocks}.

When $\Sigma=S^2$ the embedding is automatically cellular, so each combinatorial face is the graph boundary of a topological face, and these topological faces are all homeomorphic to the open disk. So gluing disks to the combinatorial faces of the combinatorial embedding $\pi$ reconstructs an embedding that realizes $\pi$. In particular if two embeddings $\iota_1$ and $\iota_2$ define the same combinatorial embedding, then they are the same up to a homeomorphism of $S^2$. That is, there exists a homeomorphism $\varphi: S^2 \to S^2$ such that $\iota_1 = \varphi \circ \iota_2$. 

When $\Sigma$ is some other orientable surface the picture is more complicated. The cyclic graph $C_n$ can be embedded in the torus $T^2$ two different ways that are not the same up to homeomorphisms, yet define the same (and only) combinatorial embedding of $C_n$. (Neither of these embeddings is cellular.) To be able to reconstruct an embedding from the combinatorial data we further need to record the genus of the topological faces of $G$, as well as the number of closed disks removed (see Example \ref{example:cycle_on_torus}). Each topological face $F_i$ is of the form $\Sigma_i \setminus (D^i_1 \cup \ldots \cup D^i_{k_i})$, where $\Sigma_i$ is an orientable surface and the $D^i_j \subseteq \Sigma_i$ are disjoint closed disks ($k_i \geq 1$). 

The graph boundary $\partial F_i$ is a union of combinatorial faces of $G$. Each combinatorial face is part of the graph boundary of exactly one topological face. So we get a partition of the combinatorial faces $F_c(G,\pi)$ of $(G,\pi)$ into sets $P_i \subseteq F_c(G,\pi)$. We have $P_i \cap P_j = \emptyset$ for $i\neq j$ and $\cup_i P_i = F_c(G,\pi)$. To each subset $P_i$ we record the genus $g_i$ of $\Sigma_i$. Note that $g_i \leq g(\Sigma)$. 

To summarize, we collected the data $\pi=(\pi_v)_{v \in V(G)}$, the partition $(P_i)$ of $F_c(G,\pi)$ and for each $P_i$ the number $g_i$. This information is sufficient to reconstruct the embedding $\iota$ by gluing $\Sigma_i \setminus (D^i_1 \cup \ldots \cup D^i_{k_i})$ to $G$ along the combinatorial faces collected in $P_i$, where $\Sigma_i$ is the surface of genus $g_i$. Here $k_i=|P_i|$. If two embeddings give the same combinatorial data, they are the same up to homeomorphisms. Note that the correspondence between the combinatorial faces in $P_i$ and the $D^i_j$ need not be recorded for the reconstruction of the embedding, as any bijective mapping gives the same embedding up to homeomorphisms.

\begin{corollary} \label{corollary:finitely_many_embeddings}
Let the finite graph $G$ and the orientable surface $\Sigma$ be fixed. There are finitely many embeddings of $G$ into $\Sigma$ up to homeomorphisms.
\end{corollary}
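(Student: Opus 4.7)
The plan is to invoke the reconstruction discussion immediately preceding the corollary: every embedding $\iota$ of $G$ into $\Sigma$ is determined up to homeomorphism by the triple consisting of the combinatorial embedding $\pi = (\pi_v)_{v\in V(G)}$, the partition $(P_i)$ of the combinatorial faces $F_c(G,\pi)$ induced by the topological faces, and the genera $g_i$ attached to each part. So it suffices to observe that each of these three pieces of data can take only finitely many values when $G$ is finite and $\Sigma$ is fixed.

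First I would note that for each vertex $v$, the set of darts leaving $v$ is finite (since $G$ is locally finite and, being finite, in particular has bounded degree), so there are only finitely many cyclic permutations $\pi_v$. A combinatorial embedding is a choice of one such $\pi_v$ at each of the finitely many vertices, so the number of combinatorial embeddings of $G$ is finite. Second, for each fixed $\pi$, the combinatorial faces $F_c(G,\pi)$ form a finite set, since every face corresponds to an orbit of the edge-successor rule on the finite set of darts. Hence the number of partitions $(P_i)$ of $F_c(G,\pi)$ is finite. Third, for each part $P_i$, the recorded genus $g_i$ satisfies $0 \le g_i \le g(\Sigma)$, so only finitely many values are available.

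Putting these together, the total number of admissible triples $(\pi, (P_i), (g_i))$ is finite. By the reconstruction principle spelled out above (gluing surfaces with disks removed along the combinatorial faces recovers $\iota$ up to homeomorphism of $\Sigma$, and the bijection between combinatorial faces in $P_i$ and the removed disks $D^i_j$ is immaterial), each triple determines at most one embedding up to homeomorphism. Therefore there are only finitely many embeddings of $G$ into $\Sigma$ up to homeomorphism, which is the claim.

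I do not expect any real obstacle here: the entire content of the statement is already packaged in the preceding paragraphs, and the proof is a direct counting argument once one accepts the reconstruction description. The only mildly subtle point to mention is why different choices of the bijection between $P_i$ and $\{D^i_1,\dots,D^i_{k_i}\}$ do not produce new homeomorphism classes, but this was already noted in the text and so can simply be cited.
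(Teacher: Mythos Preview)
Your proposal is correct and follows exactly the paper's own approach: the paper's proof is the single sentence ``There are finitely many choices for the combinatorial data $\pi$, $(P_i)$ and $(g_i)$, with $g_i \leq g(\Sigma)$,'' and you have simply spelled out in more detail why each of these finitely many choices is indeed finite.
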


\begin{proof}
There are finitely many choices for the combinatorial data $\pi$, $(P_i)$ and $(g_i)$, with $g_i \leq g(\Sigma)$. 
\end{proof}

\begin{example} \label{example:cycle_on_torus}
For the cycle $C_n$ let $\pi$ denote its only combinatorial embedding (which is an involution of the two outgoing darts at every vertex). Let $E$ and $E'$ denote the two combinatorial faces. When embedding $C_n$ into $T^2$ the two distinct embeddings produce the combinatorial data $(\pi, P_1=\{E,F\}, g_1=0)$ and $(\pi, P_1=\{E\},P_2=\{F\}, g_1=0, g_2=1)$.
\end{example}

\begin{remark}
Not all possible choices of $\pi$, $(P_i)$ and $(g_i)$ with $g_i \leq g(\Sigma)$ will give an embedding into $\Sigma$. Even in the cellular case (meaning each $P_i$ is a singleton and all $g_i$ are $0$), the genus of the surface created by the gluing procedure might vary depending on $\pi$. See \cite[Chapter 3]{beineke2009topics} where the enumeration of cellular maps into surfaces of different genus is discussed.
\end{remark}

\subsection{Unimodular embeddings} \label{subsec:unimodular_embeddings}

We study invariant and unimodular embeddings in the planar case, that is graphs embeddable into $\Sigma = S^2$. In the previous subsections we considered embeddings up to homeomorphisms. As we mentioned before, the difference between amenable and non-amenable URPG's becomes apparent when we specify the metric on the punctured sphere, and consider embeddings up to isometries.

As in the Introduction, let $M$ denote either $\R^2$ or $\H^2$. We denote by $\mathbb{G}(M)$ the space of graphs amicably embedded into $M$. Elements of $\mathbb{G}(M)$ are denoted by typewriter font letters like ${\tt G}$. $\mathbb{G}(M)$ carries a standard Borel structure by requiring all edge- and vertex-counting functions over Borel subsets of $M$ to be measurable. The group $\Gamma = \Isom(M)$ acts on $\mathbb{G}(M)$ measurably by shifting the vertices and edges. The shift of ${\tt G} \in \mathbb{G}(M)$ by $\varphi \in \Gamma$ is denoted $\varphi.{\tt G}$. We denote by $\mathbb{G}_0(M)$ the subspace of embedded graphs that have a vertex at the origin $0 \in M$; this is again a standard Borel space. We denote by $\mathbb{G_{\bullet}(M)}=\{({\tt G}, {\tt o}) \mid {\tt G} \in \mathbb{G}(M), {\tt o} \in V({\tt G})\}$ the set of rooted embedded graphs.

A \emph{drawing} of a graph on $M$ is \emph{an equivalence class} of locally finite, amicably embedded graphs, where two such embeddings are equivalent if they are the same up to an element of $\Gamma$. The set of such drawings is denoted $\mathbb{D}(M)$. The equivalence class of an embedded graph ${\tt G} \in \mathbb{G}(M)$ is denoted by $[{\tt G}]$. 

A \emph{rooted drawing} is a drawing together with a distinguished vertex. More precisely, the space of rooted drawings $\mathbb{RD}(M)$ is $\mathbb{G_{\bullet}(M)}/\Gamma$. The equivalence class of $({\tt G}, {\tt o}) \in \mathbb{G_{\bullet}(M)}$ is denoted by $[{\tt G}, {\tt o}]$. Equivalently, a rooted drawing can be thought of as a rooted graph embedded into $M$ with the root at the origin $0 \in M$, up to isometries of $M$ fixing $0$. Therefore $\mathbb{RD}(M)$ is in fact in bijection with $\mathbb{G}_0(M) / \Stab_{\Gamma}(0)$. We use this correspondence to define the Borel structure: as $\RD(M)$ is a factor of a standard Borel space by a compact subgroup it is itself standard Borel. The embedded graph ${\tt G} \in \mathbb{G}_0(M)$ represents the class $[{\tt G},0]$.



We say a probability measure $\mu$ on $\RD(M)$ is \emph{unimodular}, if it satisfies the appropriate Mass Transport Principle. Namely, one defines the space $\BRD(M)$ of \emph{birooted drawings} of graphs on $M$ similarly to $\RD(M)$, the equivalence class of a $(\mathtt{G},\mathtt{u},\mathtt{v})$ is denoted by $[\mathtt{G},\mathtt{u},\mathtt{v}]$. A \emph{payment function} is a measurable function $f: \BRD(M) \to [0,\infty)$. We say $\mu$ is unimodular if the expected income of the root of a $\mu$-random rooted drawing is equal to the expected outpay, for any payment function $f$:

\[\int_{\RD(M)} \sum_{\mathtt{u} \in V(\mathtt{G})} f([\mathtt{G},\mathtt{o}, \mathtt{u}]) \ d \mu([\mathtt{G},\mathtt{o}]) = \int_{\RD(M)} \sum_{\mathtt{u} \in V(\mathtt{G})} f([\mathtt{G}, \mathtt{u}, \mathtt{o}]) \ d \mu([\mathtt{G},\mathtt{o}]).\]

In the formula we have to choose a representative $(\mathtt{G},\mathtt{o})$ of $[\mathtt{G},\mathtt{o}]$ to compute the sum over $V(\mathtt{G})$, even though we integrate over $[\mathtt{G},\mathtt{o}]$. This is still well-defined because the value of the sum does not depend on the choice of representative. This problem of having to choose representatives for formulas to make sense is a recurring phenomenon in this section; our intention is to keep the notation reasonably clean and intuitive. We will point this out whenever it might cause confusion.

When defining unimodular and invariant embeddings of URPG's, we are interested in random variables with values in the spaces $\mathbb{G}(M)$ and $\RD(M)$. We will use gothic letters like $\mathfrak{G}$ to denote them. A \emph{unimodular embedding} of a URPG $(G,o)$ into $M$ is a random rooted drawing $[\mathfrak{G}, \mathfrak{o}]$ such that its distribution $\mu$ is unimodular, and  forgetting the embedding of any representative $(\mathfrak{G},\mathfrak{o})$ gives $(G,o)$ in distribution.

\begin{remark}
It is standard to take random rooted graphs themselves as equivalence classes of rooted graphs up to rooted isomorphism. (As we implicitly did in subsection \ref{subsec:unimodularity}.) One could ask why we never highlighted this in our notation, yet keep emphasizing the difference between classes and representatives when it comes to rooted drawings. The reason is that, in the case of drawings, equivalence is up to an isometry of the ambient space. This is much less intuitive from the name ``rooted drawing'' than the usual identification of isomorphic rooted graphs.  
\end{remark}

\subsection{Invariant embeddings} \label{subsec:invariant_embeddings}

In a moment we will define invariant embeddings through unimodularity of their Palm versions. Before that we would like to note that properties of Palm versions of invariant point processes are thoroughly studied in the literature, but conditions on point processes that inherently characterize them to be the Palm version of some invariant point process are much less frequent. Nevertheless, in the Euclidean setting such techniques have been elaborated (see \cite{heveling2005characterization}, and also \cite[Proposition II.12]{neveu1977processus}). The theory was also extended to Abelian locally compact groups (see \cite{last2009invariant}), and then to homogeneous spaces of general locally compact groups, in particular covering the hyperbolic case. The work of Last \cite{last2010stationary} covers the theory introduced in the present and consequent subsections in this generality, while much of what we need here is already present in \cite{rother1990palm}.

Recall that $\Gamma = \Isom(M)$, and $\mathbb{G}(M)$ is the space of amicably embedded graphs. A \emph{random embedded unrooted graph} $\mathfrak{G}$ is a random variable with values in $\mathbb{G}(M)$. When $\mathfrak{G}$ is $\Gamma$-invariant in distribution, the \emph{intensity} is $p(\mathfrak{G})=\E [ |V(\mathfrak{G}) \cap B|]$, where $B \subset M$ is an arbitrary measurable set of area $1$.  The value $p(\mathfrak{G})$ does not depend on the choice of $B$ because of the $\Gamma$-invariance. Notice that the intensity cannot be $0$, but it can be $\infty$. For $\mathfrak{G}$ with finite intensity, let $\mathfrak{G}^*$ denote its Palm version. Then the distribution of $[\mathfrak{G}^*,0]$ is a unimodular measure on $\RD(M)$, see e.g.\ Example 9.5 in \cite{aldous2007processes}. We say $\mathfrak{G}$ is an \emph{invariant embedding} of the URPG $(G,o)$ into $M$, if $[\mathfrak{G}^*,0]$ is a unimodular embedding of $(G,o)$. To sum up, an invariant embedding of a given URPG is a $\Gamma$-invariant random embedded unrooted graph, whose Palm version rooted at 0 gives back the URPG after forgetting the embedding.

Notice that we assume finite intensity of $\mathfrak{G}$ to be able to choose a root in the graph, giving rise to a unimodular random rooted graph: the choice is provided by the Palm version. The next example indicates that the requirement of finite intensity is necessary.

\begin{example}\label{meglepo}
Consider an invariant point process of finite intensity in the Euclidean plane; let $\omega$ be the corresponding random point set. For every $x\in\omega$, let $p_{x,i}$ be a uniformly chosen point from the ball of radius $2^{-i}$ around $x$. Define the Voronoi tessallation for the set $\{p_{x,i},x\in\omega,i=1,2\ldots\}$. Connect two points if their Voronoi cells are adjacent. The resulting graph has countably infinitely many ends, hence it is not unimodular with any choice of root. (Like transitive graphs, unimodular random rooted graphs have 0, 1, 2 or uncountably many ends, see \cite[Proposition 6.10]{aldous2007processes}.) On the other hand it is a random graph drawn in the plane with an isometry-invariant distribution.
\end{example}

\subsection{Invariant embeddings from unimodular ones}
\label{subsection:invariant_from_unimodular}

Assume as before that $\mathfrak{G}$ is a $\Gamma$-invariant embedded unrooted graph with $p(\mathfrak{G})<\infty$, and therefore $[\mathfrak{G}^*, 0]$ is unimodular. If $\lambda$ denotes the area in $M$ and ${\rm Vor}(\mathfrak{G}^*,0)$ the Voronoi cell of $0$, then 

\begin{equation} \label{eqn:voronoi}
E_{\mathfrak{G}^*}\big[\lambda\big({\rm Vor}(\mathfrak{G}^*,0)\big)\big] = \frac{1}{p(\mathfrak{G})}.
\end{equation}
Equation (\ref{eqn:voronoi}) is very intuitive, and a basic corollary of the \emph{Voronoi inversion formula}, a basic tool in the theory of point processes, at least in the Euclidean case (\cite[Section 9.4]{last2017lectures}). For homogeneous spaces \cite{last2010stationary} covers the theory.

Inspired by (\ref{eqn:voronoi}) we introduce the \emph{intensity} of a unimodular embedding $[\mathfrak{G_0},\mathfrak{o}]$ as 
\[p\big([\mathfrak{G_0},\mathfrak{o}]\big) = \E_{[\mathfrak{G_0},\mathfrak{o}]}\big[\lambda\big({\rm Vor}(\mathfrak{G_0},\mathfrak{o})\big)\big]^{-1}.\]

The reason for this less direct definition is that a priori fixing a measurable subset $B$ of the space is not possible (as it was in the definition of intensity of an invariant embedding), since in this context everything is understood up to isometries. The value $\lambda\big({\rm Vor}(\mathfrak{G_0},\mathfrak{o})\big)$ is well defined, it does not depend on the choice of representative of $[\mathfrak{G_0},\mathfrak{o}]$. 
Notice that we allow the intensity of a unimodular embedding to be $0$ (when $\E_{[\mathfrak{G_0},\mathfrak{o}]}\big[\lambda\big({\rm Vor}(\mathfrak{G_0},\mathfrak{o})\big)\big]$ is infinite), but not $\infty$. 

Moreover, the intensity of $\mathfrak{G}$ can be measured using any factor allocation (to be defined next), not just the Voronoi. An \emph{allocation scheme} is a method of partitioning the space $M$ (up to measure 0) into measurable pieces, given a discrete subset $\omega \subset M$, such that the measurable pieces are bijectively associated to the points in $\omega$. One also assumes that the partition depends on $\omega$ in a measurable way. For example in the Voronoi allocation scheme each point of $\omega$ is associated to its Voronoi cell. We denote the piece associated to a point $v \in \omega$ by $\Psi_{\omega}(v)$.

The allocation is a \emph{factor allocation} if it is equivariant, i.e., for any isometry $g$ of $M$ we have $\Psi_{g. \omega}(g. v) = g. \Psi_{\omega}(v)$. If the random point set $\omega$ is defined as the embedded vertices of some URPG by a unimodular embedding, which is by definition only defined up to rooted isometries of $M$, the property of equivariance guarantees that such an allocation scheme provides a well-defined allocation to the points of these $\omega$ almost surely. Hence factor allocations are automatically well-defined for unimodular embedded graphs (even the 0 intensity ones). We denote the scheme itself by $\Psi$. 

If $\mathfrak{G}$ is $\Gamma$-invariant random, then for any factor allocation $\Psi$ we have the analogue of (\ref{eqn:voronoi}), 
\[\E_{\mathfrak{G}^*}\big[\lambda\big(\Psi_{V(\mathfrak{G}^*)}(0)\big)\big] = \frac{1}{p(\mathfrak{G})}.\]
The following theorem allows us to reverse this correspondence. 

\begin{theorem}[Theorem 7.1 in \cite{last2010stationary} and Theorem 3 in \cite{rother1990palm}] \label{theorem:invariant_from_unimodular}
Let $[\mathfrak{G_0}, \mathfrak{o}]$ be a unimodular random rooted drawing, and $\Psi$ a factor allocation scheme such that the expected area of the cell of $\mathfrak{o}$ is finite, that is $\mathbb{E}_{[\mathfrak{G_0},\mathfrak{o}]}\left[ \lambda \big( \Psi_{V(\mathfrak{G_0})}(\mathfrak{o}) \big) \right] < \infty$. Then there is a $\Gamma$-invariant random  embedded unrooted graph $\mathfrak{G}$ of finite intensity such that $[\mathfrak{G}^*,0]$ is the same as $[\mathfrak{G_0}, \mathfrak{o}]$ in distribution. 

In particular if a URPG $(G,o)$ has a unimodular embedding of positive intensity, it also has an invariant embedding of finite intensity. 
\end{theorem}

For the sake of completeness and full agreement with our terminology and notation, we include the proof of Theorem~\ref{theorem:invariant_from_unimodular} in the Appendix.


The Palm version of an invariant process determines the process, therefore as a corollary of Theorem \ref{theorem:invariant_from_unimodular} one gets the following.

\begin{corollary}\label{cor:intensity}
Let $[\mathfrak{G_0}, \mathfrak{o}]$ be a unimodular random rooted drawing into $M$. Then for any factor allocation schemes $\Psi$ and $\Psi'$ we have 
\[\E_{[\mathfrak{G_0}, \mathfrak{o}]}\left[\lambda\big(\Psi_{V(\mathfrak{G_0})}(\mathfrak{o})\big)\right] = \E_{[\mathfrak{G_0}, \mathfrak{o}]}\left[\lambda\big(\Psi'_{V(\mathfrak{G_0})}(\mathfrak{o})\big)\right].\]
\end{corollary}

\begin{remark}
Again, there is a slight abuse of notation in the corollary above. A unimodular random rooted drawing only gives an equivalence class, so the cell of the root is not defined as a subset of $M$. Nevertheless, as both allocation schemes are factors, the area of the cell of the root is well-defined.
\end{remark}

\section{Surface graphs with 1 accumulation point} \label{section:surface_graphs}

Fix a compact orientable surface $\Sigma$. Let $\GS$ denote the family of locally finite, connected graphs that can be embedded in $\Sigma$. Let $\GS^f$ denote the family of finite graphs in $\GS$. As $\GS^f$ is minor closed, by the Robertson-Seymour graph minor theorem (see \cite{robertson2004graph} and references therein, or Chapter 12.5 in \cite{diestel2017graph}) it can be characterized by a finite set of forbidden minors $M_{\Sigma}=\{H_1, \ldots, H_n\}$.

\[\GS^f = \{G \textrm{ finite, connected} \mid H_i \textrm{ is not a minor of } G, \ \forall \ 1\leq i \leq n\}.\]

This characterization extends to $\GS$ almost automatically:

\begin{lemma} \label{lemma:embedding_from_exhaustion}
Let $G$ be a locally finite graph, and $G_1 \subseteq G_2 \subseteq \ldots$ an exhaustion of $G$ by finite graphs. If all $G_i$ can be embedded in $\Sigma$, then $G$ can also be embedded in $\Sigma$.
\end{lemma}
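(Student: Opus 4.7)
The plan is a compactness/König's-lemma argument over the finite sets of embeddings guaranteed by Corollary \ref{corollary:finitely_many_embeddings}.

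First, for each $i$ let $E_i$ denote the (finite, by Corollary \ref{corollary:finitely_many_embeddings}) set of equivalence classes up to ambient homeomorphism of amicable embeddings of $G_i$ into $\Sigma$. By hypothesis, each $E_i$ is non-empty. Next I would define a restriction map $r_i \colon E_{i+1} \to E_i$. The only thing to check is that if $\iota$ is an amicable embedding of $G_{i+1}$ into $\Sigma$, then $\iota|_{G_i}$ is still amicable; this is immediate since every finite subgraph of $G_i$ is also a finite subgraph of $G_{i+1}$. The map $r_i$ is also well-defined on equivalence classes, because any homeomorphism of $\Sigma$ carrying one representative to another of a class in $E_{i+1}$ also carries the corresponding restrictions into one another.

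This yields an inverse system of finite non-empty sets $E_1 \xleftarrow{r_1} E_2 \xleftarrow{r_2} E_3 \xleftarrow{r_3} \cdots$. By König's lemma (applied to the tree whose level-$i$ nodes are the elements of $E_i$, with child relation given by $r_i$), there exists a coherent sequence $([\iota_i])_{i \geq 1}$ with $[\iota_i] \in E_i$ and $r_i([\iota_{i+1}])=[\iota_i]$ for every $i$.

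Next I would lift this to genuine representatives that agree on restriction. Inductively, suppose we have chosen $\iota_1, \ldots, \iota_i$ with $\iota_{k+1}|_{G_k} = \iota_k$ for $k<i$. Pick any representative $\iota_{i+1}'$ of $[\iota_{i+1}]$; by coherence there is a homeomorphism $\varphi \colon \Sigma \to \Sigma$ with $\varphi \circ \iota_{i+1}'|_{G_i} = \iota_i$, so $\iota_{i+1} := \varphi \circ \iota_{i+1}'$ is a valid representative agreeing with $\iota_i$ on $G_i$. Setting $\iota(v) := \iota_i(v)$ and $\iota(e) := \iota_i(e)$ for any $i$ with $v$ or $e$ in $G_i$ gives a well-defined map $\iota \colon G \to \Sigma$. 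It is an embedding because any potential failure (two vertices mapped to the same point, or an inner point of one arc lying on another arc) would involve only finitely many vertices and edges, hence be contained in some $\iota_i(G_i)$, contradicting that $\iota_i$ is an embedding. Amicability follows similarly: any finite subgraph $F \subseteq G$ lies inside some $G_i$, and the topological faces of $\iota(F)$ coincide with those of $\iota_i(F)$, which have the required form by amicability of $\iota_i$.

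The main obstacle I anticipate is keeping the bookkeeping honest at the ``restriction of equivalence classes'' step: one must make sure that both the well-definedness of $r_i$ and the inductive choice of consistent representatives work at the level of classes modulo self-homeomorphisms of $\Sigma$, rather than just modulo homeomorphisms to possibly different embedded copies. Once this is spelled out, the remainder of the argument is pure König's lemma plus a direct-limit construction, with amicability automatically inherited because it is a property of the finite subgraphs of $G$.
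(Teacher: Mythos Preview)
Your proposal is correct and follows essentially the same approach as the paper: use Corollary~\ref{corollary:finitely_many_embeddings} to get finitely many embedding classes at each stage, extract a coherent sequence by compactness/K\"onig's lemma, and then build $\iota$ as the direct limit of representatives chosen to agree on restriction. Your treatment is in fact more careful than the paper's in spelling out amicability under restriction and the well-definedness of the restriction maps on equivalence classes.
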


\begin{proof}
By Corollary \ref{corollary:finitely_many_embeddings} each $G_i$ has finitely many embeddings into $\Sigma$ up to homeomorphisms. Hence, by Konig's lemma, we can find a coherent sequence of embeddings of the $G_i$ into $\Sigma$. That is, embeddings $\iota_i$ of $G_i$ and homeomorphisms $\varphi_i: \Sigma \to \Sigma$ such that $\iota_i = \varphi_i \circ \iota_{i+1} \vert_{G_i}$.

Then we define an embedding $\iota$ of $G$ by setting $\iota \vert_{G_i} = \varphi_1 \circ \ldots \circ \varphi_{i-1} \circ \iota_i$. This is well defined since it stabilizes for every edge, and indeed an embedding, because any unwanted intersection of arcs would be witnessed in some $G_i$.
\end{proof}

\vspace{0.3cm}

We can now characterize $\GS$ using minors. The possibly infinite, locally finite graph $H$ is said to be a minor of the locally finite graph $G$ if we can contract (possibly infinitely many) edges and delete edges and isolated vertices of $G$ to obtain $H$.

\begin{remark}
Note that contracting infinitely many edges of $G$ can result in graphs that are not locally finite. We however restrict our definition to the case where the minor $H$ is also locally finite.
When $H$ is actually finite the situation is more straightforward, $H$ is a minor of $G$ if and only if it is a minor of a finite subgraph of $G$ in the traditional sense. 
\end{remark}

\begin{lemma}
The family $\GS$ consists of all locally finite graphs not having a minor in $M_{\Sigma}$.
\[\GS = \{G \textrm{ locally finite, connected} \mid H \textrm{ is not a minor of } G, \ \forall H \in  M_{\Sigma}\}.\]
\end{lemma}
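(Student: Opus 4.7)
The plan is to prove the two inclusions separately. For the $(\subseteq)$ direction, let $G \in \GS$ and suppose for contradiction some $H \in M_{\Sigma}$ is a minor of $G$. Since $H$ is finite, the Remark tells us $H$ is a minor of some finite subgraph $G' \subseteq G$ in the classical sense. Since $G$ embeds in $\Sigma$ (amicably), so does $G'$, so $G' \in \GS^f$. The paper's amicability convention was chosen precisely so that $\GS^f$ is minor-closed, hence $H \in \GS^f$, contradicting $H \in M_{\Sigma}$.

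For the $(\supseteq)$ direction, suppose $G$ is locally finite, connected, and has no minor in $M_{\Sigma}$. Since $G$ is connected and locally finite, pick any vertex $v \in V(G)$ and let $G_i$ be the subgraph of $G$ induced on the ball $B_G(v,i)$; then $G_1 \subseteq G_2 \subseteq \ldots$ is an exhaustion of $G$ by finite connected subgraphs. Any minor of $G_i$ is also a minor of $G$, so no $H \in M_{\Sigma}$ is a minor of any $G_i$. By the Robertson--Seymour characterization of $\GS^f$, this gives $G_i \in \GS^f$ for every $i$. Now Lemma \ref{lemma:embedding_from_exhaustion} yields an embedding of $G$ into $\Sigma$, i.e.\ $G \in \GS$.

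There is no real obstacle here; the work has already been done in setting up amicability (which guarantees minor-closedness of $\GS^f$) and in proving Lemma \ref{lemma:embedding_from_exhaustion} (which upgrades finite embeddability of the exhaustion to embeddability of the limit using compactness of the space of embeddings up to homeomorphism, via Corollary \ref{corollary:finitely_many_embeddings}). The only subtlety worth flagging is that the minor relation for possibly infinite $H$ is more delicate than the finite one, but in the present statement all members of $M_{\Sigma}$ are finite, so reduction to finite subgraphs via the Remark is immediate and no infinite contraction arguments are needed.
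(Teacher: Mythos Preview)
Your proof is correct and follows essentially the same approach as the paper's: both reduce to the equivalence ``$H$ is a minor of $G$ iff $H$ is a minor of some finite $G_i$'' together with ``$G \in \GS$ iff every $G_i \in \GS^f$'' (one direction by restricting the embedding, the other by Lemma~\ref{lemma:embedding_from_exhaustion}). The paper compresses both inclusions into a single sentence, whereas you spell them out separately, but the content is identical.
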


\begin{proof}
Let $G$ be a locally finite graph, $G_1 \subseteq G_2 \subseteq \ldots$ be an exhaustion of $G$ by finite graphs. A finite graph $H$ is a minor of $G$ if and only if $H$ is a minor of some $G_i$. By Lemma \ref{lemma:embedding_from_exhaustion} $G \in \GS$ if and only if $G_i \in \GS^f$ for all $i$.
\end{proof}

\vspace{0.3cm}
We now turn to embeddings with 1 accumulation point. 

\begin{definition*}
Let $\GS^{*}$ denote the family of graphs in $\GS$ that can be embedded in $\Sigma$ with at most $1$ accumulation point. 
\end{definition*}

\begin{remark}
It is clear that $\GS^{*}$ is minor closed, yet this does not imply it can be characterized by a finite set of forbidden minors, because the Robertson-Seymour theorem applies to finite graphs, and it is an open question whether it extends to countable graphs \cite[Chapter 12.7]{diestel2017graph}.
\end{remark}

Recall that for finite graphs, $H$ is a minor of $G$ if and only if we can 
find
$|H|$-many 
pairwise disjoint subtrees of $G$ 
such that contracting the subtrees gives a supergraph of $H$. Our definition of $*$-minor follows this formulation.

\begin{definition} \label{definition:star_minor}
Let $|H|=n$. $H$ is a $*$-minor of $G$ if we can find $n-1$ finite subtrees $X_1, \ldots, X_{n-1}$ and  a subforest $X_n$ with all connected components infinite such that
\begin{enumerate}
\item The vertex sets $V(X_i)$, $1 \leq i \leq n$, partition $V(G)$.
\item \label{item:contraction_collapse} $H$ is isomorphic to a subgraph of the graph we get from $G$ by contracting all edges of the $X_i$ and identifying all the points obtained from distinct connected components of $X_n$ to a single one. 
\end{enumerate}
\end{definition}

The graph obtained in part \ref{item:contraction_collapse} might have infinitely many loops at the vertex coming from $X_n$. This makes no difference for us.  

\begin{remark}
Note that a minor of $G$ is not necessarily a $*$-minor of $G$. In fact, finite graphs have no $*$-minors at all. See Figure \ref{fig:K_3_3_star_minor} for an infinite graph with $K_{3,3}$ as a $*$-minor. The blue $*$ indicates the point of $K_{3,3}$ ``at infinity''.
\end{remark}

\begin{lemma}
$H$ is a $*$-minor of $G$ if and only if $G$ has an infinite, locally finite minor $G'$ such that 
\begin{enumerate}
\item $G'$ has a subgraph isomorphic to $H \setminus v$ for some $v \in V(H)$. By slight abuse of notation $(H \setminus v) \subset G'$.
\item For all vertices $u \in N_H(v)\subseteq (H \setminus v)$ there is an infinite path starting at $u$ in the graph $G' \setminus (H \setminus v)$.
\end{enumerate}
\end{lemma}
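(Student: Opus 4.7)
The plan is to prove both implications by explicit construction.

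For the forward direction, given witnesses $X_1, \ldots, X_{n-1}, X_n$ realizing $H$ as a $*$-minor, I let $G'$ be the graph obtained from $G$ by contracting each finite subtree $X_j$ (for $j < n$) to a single vertex $x_j$, while leaving $X_n$ and the remaining $G$-edges untouched. Finiteness of the $X_j$ together with local finiteness of $G$ makes $G'$ locally finite, and the fact that every component of $X_n$ is infinite makes $G'$ infinite. Every edge $(u_i, u_j) \in E(H \setminus v)$ in the $*$-minor quotient is witnessed by a $G$-edge between $X_i$ and $X_j$, and that same edge becomes an edge $(x_i, x_j) \in E(G')$, so $H\setminus v \subseteq G'$ under the identification $u_j \leftrightarrow x_j$. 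For each $u_j \in N_H(v)$, the edge $(v, u_j)$ in the quotient comes from a $G$-edge joining $X_j$ to some infinite component $C$ of $X_n$; this $G$-edge becomes an edge in $G'$ from $x_j$ to a vertex $w_j \in V(C) \subseteq V(G') \setminus V(H\setminus v)$, and the infinite tree $C$ itself provides an infinite simple path in $G' \setminus (H \setminus v)$ starting at $w_j$.

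For the backward direction I fix a representation of $G'$ as a minor of $G$ with connected branch sets $\{B_y : y \in V(G')\}$, possibly infinite, and one realizing $G$-edge for each edge of $G'$. For each $u_j \in V(H\setminus v)$ I select a finite subtree $X_j \subseteq B_{u_j}$ that contains the endpoints (inside $B_{u_j}$) of the realizing edges for all $(H \setminus v)$-edges at $u_j$, and additionally (when $u_j \in N_H(v)$) the $B_{u_j}$-endpoint of the first edge of the given infinite witness path $P_j$ from $u_j$. Setting $X = \bigcup_{j<n} V(X_j)$, a finite vertex set, connectedness and local finiteness of $G$ imply that $G \setminus X$ has only finitely many components. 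I absorb every finite component of $G \setminus X$, together with one connecting edge, into some $X_j$, obtaining enlarged finite subtrees $X_j'$; I then take $X_n$ to be a spanning forest of the union of the remaining infinite components $I_1, \ldots, I_m$. Each tree-component of $X_n$ spans some $I_l$ and is therefore infinite, and the vertex sets $V(X_1'), \ldots, V(X_{n-1}'), V(X_n)$ partition $V(G)$. The edges of $H \setminus v$ are reproduced in the $*$-minor quotient by the chosen realizing $G$-edges among the $X_j'$, and for each $u_j \in N_H(v)$ the required $G$-edge from $X_j'$ to $X_n$ is obtained by pulling $P_j$ back through the branch sets to an infinite simple path $Q_j$ in $G \setminus X$ that begins adjacent to $X_j$: since $Q_j$ is infinite, the component of $G \setminus X$ containing its starting vertex is infinite, was not absorbed, and therefore lies in $X_n$.

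The main obstacle is that the branch sets $B_{u_j}$ need not be finite (so they cannot serve as the $X_j$ directly) and the induced subgraph on the complement of the $B_{u_j}$ may carry stray finite components (which would prevent $X_n$ from having only infinite components). Passing to finite subtrees inside the branch sets, and then absorbing finite components into the $X_j'$, is the technical step that fixes both problems. The key observation that makes this absorption safe is that each pulled-back witness path $Q_j$ already certifies infinitude of its ambient component of $G \setminus X$, so that component is never absorbed, and the required adjacency from $X_j'$ to $X_n$ survives.
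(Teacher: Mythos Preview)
Your proof is correct and follows the same strategy as the paper's. The paper's own argument is a two-sentence sketch (``Contracting the finite trees in the definition gives $G'$ and vice versa. If $G' \setminus (H \setminus v)$ has finite connected components, then those can be contracted into $(H \setminus v)$ to ensure that we get a partition of the vertices.''), and your write-up supplies precisely the details that sketch leaves implicit: in particular, in the backward direction you correctly observe that the branch sets $B_{u_j}$ may be infinite and hence cannot serve as the $X_j$ directly, and you resolve this by passing to finite Steiner subtrees and then absorbing the stray finite components of $G\setminus X$---a point the paper does not spell out. The only cosmetic difference is that the paper phrases the cleanup step as contracting finite components of $G'\setminus(H\setminus v)$ inside $G'$, whereas you perform the absorption inside $G$; the two formulations amount to the same thing.
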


\begin{proof}
Contracting the finite trees in the definition gives $G'$ and vice versa. If $G' \setminus (H \setminus v)$ has some finite connected components, then we can contract them into $(H \setminus v)$ to ensure that we get a partition of the vertices. 
\end{proof}

\vspace{0.3 cm}

\begin{figure}[h]
     \centering
         \begin{tikzpicture}[node distance=1cm,
main node/.style={circle,minimum width=1.9pt, fill, inner sep=0pt,outer sep = 0pt},
red node/.style={circle,minimum width=2.2pt, fill,red, inner sep=0pt,outer sep = 0pt},
blue node/.style={circle,minimum width=2.2pt, fill,blue, inner sep=0pt,outer sep = 0pt},
square node/.style={draw,regular polygon,regular polygon sides=4,fill,inner sep=0.9pt,outer sep=0.1pt},
triangle node/.style={draw,regular polygon,regular polygon sides=3,fill,inner sep=0.8pt,outer sep=0.1pt}]   

  \node[red node] (a) at (-1,1){};
  \node[red node] (b) at (-1,0){};
  \node[red node] (c) at (-1,-1){};
  \node[blue node] (u) at (1,0.5){};
  \node[blue node] (v) at (1,-0.5){};
 
  \path [draw=gray, thin] (a) edge (u);
  \path [draw=gray, thin] (b) edge (u);
  \path [draw=gray, thin] (c) edge (u);
  \path [draw=gray, thin] (a) edge (v);
  \path [draw=gray, thin] (b) edge (v);
  \path [draw=gray, thin] (c) edge (v);

  \node[main node] (a1) at (-2,1){};
  \node[main node] (a2) at (-3,1){};
  \node[main node] (a3) at (-4,1){};

  \node[main node] (b1) at (-2,0){};
  \node[main node] (b2) at (-3,0){};
  \node[main node] (b3) at (-4,0){};

  \node[main node] (ab4) at (-5,0.5){};
  \node[main node] (ab5) at (-6,0.5){};
  \node[main node] (ab6) at (-7,0.5){};

  \node[main node] (c1) at (-2,-1){};
  \node[main node] (c2) at (-3,-1){};
  \node[main node] (c3) at (-4,-1){};
  \node[main node] (c4) at (-5,-1){};
  \node[main node] (c5) at (-6,-1){};
  \node[main node] (c6) at (-7,-1){};

  \path [draw=gray, thin] (a) edge (a1);
  \path [draw=gray, thin] (a1) edge (a2);
  \path [draw=gray, thin] (a2) edge (a3);
  \path [draw=gray, thin] (a3) edge (ab4);

  \path [draw=gray, thin] (b) edge (b1);
  \path [draw=gray, thin] (b1) edge (b2);
  \path [draw=gray, thin] (b2) edge (b3);
  \path [draw=gray, thin] (b3) edge (ab4);
  
  \path [draw=gray, thin] (ab4) edge (ab5);
  \path [draw=gray, thin] (ab5) edge (ab6);

  \path [draw=gray, thin] (c) edge (c1);
  \path [draw=gray, thin] (c1) edge (c2);
  \path [draw=gray, thin] (c2) edge (c3);
  \path [draw=gray, thin] (c3) edge (c4);
  \path [draw=gray, thin] (c4) edge (c5);
  \path [draw=gray, thin] (c5) edge (c6);

  \node (AB) at (-8,0.5) [label=center:{$\dots$}]{};
  \node (C) at (-8,-1) [label=center:{$\dots$}]{};
  \node (x) at (-9,0) [label=center:{\textcolor{blue}{$*$}}]{};

\end{tikzpicture}
        \caption{An infinite graph that has $K_{3,3}$ as a $*$-minor}
        \label{fig:K_3_3_star_minor}
\end{figure}

We are now ready to state the main result of this section. Let $\Sigma_g$ denote the orientable closed surface with genus $g$ (which is unique up to homeomorphisms). Clearly $\mathcal{G}_{\Sigma_g} \subset \mathcal{G}_{\Sigma_{g+1}}$.

\begin{theorem} \label{theorem:simply_connected_comb_embedding}
A planar graph is in $\mathcal{G}_{S^2}^{*}$ if and only if it does not have $K_{3,3}$ or $K_5$ as a $*$-minor. For a positive genus $g \geq 1$ a graph $G \in (\mathcal{G}_{\Sigma_{g}} \setminus \mathcal{G}_{\Sigma_{g-1}})$ is in $\mathcal{G}_{\Sigma_{g}}^{*}$ if and only if it has no $*$-minors in $M_{\Sigma_g}$.
\end{theorem}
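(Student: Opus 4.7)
I will prove both implications separately, focusing on the planar case since the higher-genus case is entirely analogous.

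For \emph{necessity}, suppose $\iota$ is a locally finite embedding of $G$ into $\Sigma_g$ with at most one accumulation point $p$, and assume toward a contradiction that some $H \in M_{\Sigma_g}$ is a $*$-minor of $G$ witnessed by finite subtrees $X_1, \ldots, X_{n-1}$ and an infinite-component subforest $X_n$. Pick a finite $G' \subset G$ containing every $X_j$ for $j < n$, the finite set of cross-edges in $G$ realizing the edges of $H$, and at least one vertex from each component of $X_n$ touched by a cross-edge. Let $F$ be the face of $\iota(G')$ containing $p$. Every infinite component of $X_n \setminus G'$ accumulates at $p$, so it lies in $F$, and consequently all ``leaves'' of $X_n \cap G'$ (the vertices with $X_n$-neighbors outside $G'$) lie on $\partial F$. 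Place a new vertex $\infty$ inside $F$ and join it by pairwise internally disjoint arcs within $F$ to these leaves; the resulting finite graph $G'^+$ embeds in $\Sigma_g$. Then $H$ is a minor of $G'^+$ via branch sets $X_1, \ldots, X_{n-1}$ and $(X_n \cap G') \cup \{\infty\}$, which is connected because $\infty$ touches at least one leaf from every component of $X_n \cap G'$. Since $\mathcal{G}_{\Sigma_g}^f$ is minor-closed this forces $H \in \mathcal{G}_{\Sigma_g}^f$, contradicting $H \in M_{\Sigma_g}$.

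For \emph{sufficiency}, assume $G$ has no $*$-minor in $M_{\Sigma_g}$. For any finite subgraph $H \subset G$, form the finite auxiliary graph $G_H^\infty$ by contracting each connected component of $G \setminus H$ to a single vertex and then identifying all vertices coming from infinite components into one vertex $\infty$; this graph is finite because local finiteness of $G$ implies $G \setminus H$ has only finitely many components. I would first show $G_H^\infty \in \mathcal{G}_{\Sigma_g}^f$. If instead some $F \in M_{\Sigma_g}$ were a minor of $G_H^\infty$ with branch sets $Y_1, \ldots, Y_k$ and $\infty \in Y_k$, I would produce a $*$-minor of $G$ as follows: for $i < k$ expand $Y_i$ to its pre-image in $G$ and take a spanning tree to obtain a finite subtree $X_i$; for $X_k$, fix a spanning tree $T$ of $Y_k$ in $G_H^\infty$, realize each $T$-edge not touching $\infty$ as the obvious $G$-edge between pieces, realize each $T$-edge incident to $\infty$ as a concrete $G$-edge from the relevant $H$-vertex into a vertex of an infinite component of $G \setminus H$, and adjoin a spanning tree of each expanded piece (a spanning forest for the $\infty$-piece). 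An edge count shows $X_k$ is acyclic, and since every vertex of $Y_k \setminus \{\infty\}$ lies on a $T$-path to $\infty$, every non-infinite-component vertex of $X_k$ is connected through realized $T$-edges into some infinite component of $G \setminus H$; hence every component of $X_k$ is infinite. This witnesses $F$ as a $*$-minor of $G$, contradiction.

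Having established that each $G_{G_i}^\infty$ embeds in $\Sigma_g$ for a fixed exhaustion $G_1 \subseteq G_2 \subseteq \cdots$ of $G$, fix such embeddings; restricting to $G_i$ yields an embedding $\iota_i$ with a designated face $F_i$ containing the image of $\infty$. Any connected component of $G \setminus G_i$ lies (by connectedness) in one face of $\iota_i(G_i)$, and by choice every infinite component lies in $F_i$. Moreover the restriction $\iota_{i+1}|_{G_i}$ is itself such a good embedding of $G_i$ with $F_{i+1} \subseteq F_i$: every infinite component of $G \setminus G_i$ contains an infinite component of $G \setminus G_{i+1}$, which is already in $F_{i+1}$. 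Since each $G_i$ has only finitely many embeddings into $\Sigma_g$ up to homeomorphism (Corollary \ref{corollary:finitely_many_embeddings}), König's lemma yields a coherent sequence $(\iota_i)$ with nested good faces. Finally I would post-compose each $\iota_i$ with a self-homeomorphism of $\Sigma_g$ to realize the $F_i$ as a shrinking nested sequence of disk neighborhoods of a single fixed point $p$; the limit embedding of $G$ then has $p$ as its only accumulation point.

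The main obstacle is the construction of $X_k$ in the sufficiency argument: arranging the expansion of the $\infty$-containing branch set into an acyclic subforest whose every connected component is infinite, despite that expansion possibly including stray $H$-vertices and entire finite components of $G \setminus H$ whose only ``connection to infinity'' was via the identified vertex $\infty$ in $G_H^\infty$. The crucial device is the spanning tree $T$ of $Y_k$: every vertex of $Y_k$ lies on a $T$-path to $\infty$, whose final edge is realized in $G$ as a concrete edge into an infinite component of $G \setminus H$. This forces every stray piece into a component of $X_k$ containing an entire infinite component, while the tree structure of $T$ and the disjointness of pieces keep the overall construction cycle-free.
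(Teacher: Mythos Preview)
Your argument follows the paper's strategy closely, but two steps need repair.

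In the necessity direction, the ``consequently'' is unjustified: a leaf of $X_n \cap G'$ may have its $X_n$-neighbor in a \emph{finite} component of $X_n \setminus G'$, and such a component need not sit in the face $F$ containing the accumulation point. (Concretely, if a component of $X_n$ is the ray $v_1v_2v_3\cdots$ and $G'$ picks up only $v_1$ and $v_3$, then $v_1$ is a leaf whose outside neighbor $v_2$ lies in a one-point component.) The easy fix, which the paper exploits systematically, is to enlarge $G'$ so that every component of $G \setminus G'$ is infinite; then every vertex outside $G'$ lies in $F$ and every leaf is on $\partial F$.

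In the sufficiency direction your construction works, but is heavier than needed. The paper first arranges the exhaustion so that each $G\setminus G_i$ has only infinite components, and then uses the simpler auxiliary graph $G_i^+$ obtained from $G_i$ by wiring a single new vertex $p_i$ to the finite set $V^*(G_i)$ of boundary vertices; the implication ``$H$ a minor of $G_i^+$ $\Rightarrow$ $H$ a $*$-minor of $G$'' is then a one-liner, and your entire discussion of expanding the $\infty$-branch set via a spanning tree $T$ of $Y_k$ becomes unnecessary. More seriously, your remark that the higher-genus case is ``entirely analogous'' glosses over a real issue: your final step shrinks the $F_i$ to disk neighborhoods of a point, but for $g\geq 1$ a face of an embedded finite graph need not be a disk. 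This is exactly where the hypothesis $G\in \mathcal{G}_{\Sigma_g}\setminus \mathcal{G}_{\Sigma_{g-1}}$ enters in the paper: for $i$ large enough $G_i$ already requires genus $g$, which forces every face of any embedding of $G_i$ into $\Sigma_g$ to be a disk, and the shrinking argument goes through.
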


\begin{remark}
Note that because of the $G \in (\mathcal{G}_{\Sigma_{g}} \setminus \mathcal{G}_{\Sigma_{g-1}})$ condition the theorem only deals with the case when an infinite graph is embedded into a surface of minimal genus possible. This assumption is indeed necessary. The planar graph $K_3 \times \mathbb{Z}$ has no embedding into the torus $T^2$ with a single accumulation point. 
Still it does not have $*$-minors in $M_{T^2}$, because all its $*$-minors are embeddable into $T^2$.
To see this, consider 
the embedding $\beta$ to the torus $[0,1)^2$ (with $[0,1)$ understood with 0 and 1 identified) that maps  $(x,n)\in V(K_3\times \mathbb{Z})$ to $(b_1(x), b_2(n))$, where $b_1$ is some bijection between $V(K_3)$ and $\{0,1/3,2/3\}$, and $b_2: \Z \to (0,1)$ is a strictly increasing function with $\lim_{n \to -\infty}b_2(n) =0$ and $\lim_{n \to -\infty}b_2(n) =1$. Edges are embedded into geodesics between their endpoints. If $H$ is a finite graph that arises as a $*$-minor of $G$, then, using Definition \ref{definition:star_minor}, we can make $\beta(X_n)$ connected by adding some broken line segments to it that are disjoint from $\cup_{e\in E(K_3\times \mathbb{Z})}\beta (e)\setminus \cup_{v\in V(K_3\times \mathbb{Z})}\beta(v)$.
This set together with the $\beta (X_i)$, $i=1,\ldots,n-1$ can be used to represent the vertices of $H$ (see the equivalent formulation of minors before Definition \ref{definition:star_minor}) for an embedding, with edges between them coming from $\beta(K_3\times \mathbb{Z})$. 
\end{remark}

\begin{remark}
Halin's original theorem \cite{halin1966haufungspunktfreien} is not stated using $*$-minors, but instead lists the 4 infinite graphs that, if minors of $G$, can be witnesses of $K_{3,3}$ or $K_{5}$ being a $*$-minor of $G$. Such a translation is theoretically possible in the higher genus case as well. If one knew the graphs in $M_{\Sigma_g}$, one could construct the finitely many infinite graphs that are minor-obstructions to $G$ being in $\mathcal{G}_{\Sigma_{g}}^{*}$. 
\end{remark}

\begin{proof}
Suppose that $G \in \mathcal{G}_{\Sigma_{g}}^{*}$, and assume towards contradiction that some $H \in M_{\Sigma_g}$ is a $*$-minor of $G$. Carrying out the minor operations on the embedded $G$ we can find an embedded $G'$ with $(H \setminus v) \subset G'$ for some $v \in V(H)$. This gives an embedding of $H \setminus v$ into $\Sigma_g$. 

We claim that with this embedding, all $u \in N_H(v)$ are on the topological boundary of the same topological face of $(H\setminus v)$, namely the one containing the accumulation point of the embedded $G'$. Indeed, all these $u$ are part of an infinite path in $G' \setminus (H \setminus v)$, and in the embedding of $G'$ all these infinite paths have to accumulate at the unique accumulation point. 

Since all the vertices of $N_H(v)$ are on the topological boundary of the same topological face, we can place a vertex $v$ anywhere on that topological face, and connect it with $N_H(v)$ without intersections. We found an embedding of $H \in M_{\Sigma_g}$ into $\Sigma_g$ which is a contradiction.

For the other implication, suppose that $G$ has no $*$-minors in $M_{\Sigma_g}$. Let $G_1 \subset G_2 \subset \ldots \subset G$ be an exhaustion of $G$ by finite graphs. By increasing each $G_i$ if necessary, we can assume that all components of $G \setminus G_i$ are infinite. We also assume that $N_G(V(G_i)) \subseteq G_{i+1}$.

For each $G_i$ let $V^{*}(G_i)$ denote the set of vertices $v \in V(G_i)$ that are adjacent to (an infinite component of) $G \setminus G_i$. By our second assumption $V^{*}(G_{i+1}) \cap V(G_i) = \emptyset$. We construct the graph $G_i^+$ by wiring together all vertices in $V^{*}(G_i)$ using an additional vertex. That is $V(G_i^+)=V(G_i) \cup \{p_i\}$ where $p_i$ is an auxiliary point. For the edges we have $E(G_i^+) = E(G_i) \cup \{(p_i,u) \mid u \in V^{*}(G_i)\}$.

The graph $G_i^+$ can be embedded in $\Sigma_g$. Indeed, if $H \in M_{\Sigma_g}$ were a minor of $G_i^+$, then H would be a $*$-minor of $G$. Equivalently, $G_i$ can be embedded in a way that all vertices in $V^{*}(G_i)$ are on the topological boundary of the same topological face $F_i$ of $G_i$. This holds for all $i$. 

For each $G_i$ there are finitely many such embeddings up to homeomorphisms. By compactness we can choose consistent embeddings for the $G_i$. That is, restricting the embedding of $G_i$ to $G_j$ ($j \leq i$) agrees with the embedding of $G_j$. The embeddings of the $G_i$ together give an embedding $\iota$ of $G$, as shown in Lemma \ref{lemma:embedding_from_exhaustion}. Since all vertices of $G_i$ connected to infinity are along the same topological face $F_i$, we will be able to make sure that there is only one accumulation point, by choosing $F_i$ such that $\lim F_i=\{x\}$ for some point $x \in \Sigma_g$.

In the case of planar graphs the intuition is very clear: each finite part gets embedded such that all vertices connected to infinity are along the outer face. We then claim that we can keep extending this embedding in a way that no accumulation point (besides infinity) is created. 

To be precise, we argue as follows. For some $i$ large enough, $G_{i}$ cannot be embedded in a surface of smaller genus, which implies that all topological faces must be disks. In particular the topological face $F_i$ is a closed disk $D_{i}$. Fix a homeomorphism between $D_i$ and the unit disk in the Euclidean plane. This allows us to metrize $D_i$ such that it has radius 1. 

By changing the embedding of $G_{i+1}$ by a homeomorphism only inside $D_i$, we can assume that all vertices in $V(G_{i+1}) \setminus V(G_{i})$ lie in the region $D_i \setminus D_{i+1}$, where $D_{i+1}$ is the disk concentric to $D_i$ with radius $1/2$. Moreover we make sure that $V^{*}(G_{i+1})$ lies on the topological boundary of $D_{i+1}$.

Repeating this for all $k\geq 0$, we achieve that $V(G_{i+k+1}) \setminus V(G_{i+k})$ is mapped in the region $D_{i+k} \setminus D_{i+k+1}$, where $D_{i+k}$ is concentric to $D_i$ with radius $1/2^k$, while making sure that $V^{*}(G_{i+k})$ lies on the topological boundary of $D_{i+k}$. Note that at each step we only change the embedding inside $D_{i+k}$ so for every vertex and edge the embedding eventually stabilizes. The resulting embedding has 1 accumulation point, the center of $D_i$.
\end{proof}

\begin{proof}[Proof of Theorem \ref{starminor}]
This is a special case of the first claim in Theorem \ref{theorem:simply_connected_comb_embedding}.
\end{proof}

\section{Unimodular combinatorial embeddings of URPG's}\label{section:unimod_combin}

From now on we only consider embeddings into $S^2$.

In this section we prove Theorem \ref{theorem:unimodular_combinatorial_embedding_1_acc}. The titles of the subsections are meant to provide a sketch of the proof. The first three subsections give the necessary definitions and earlier results that we will be relying on. 

\subsection{Accumulation points of planar combinatorial embeddings}
Let $G$ be an infinite, locally finite graph. Given a combinatorial embedding of $G$, can we identify accumulation points that will be present for any actual embedding representing this combinatorial embedding?

In Section \ref{subsection:combinatorial_embeddings} we argued that for general surfaces a combinatorial embedding is not enough to reconstruct an embedding even up to homeomorphisms. In the case of $S^2$, however, the reconstruction is possible when the graph is finite, because all combinatorial faces bound a topological face homeomorphic to a disk. 

Unfortunately this does not hold for infinite graphs. Indeed, there are embeddings of $\Z$ in $S^2$ with 1 or with 2 accumulation points, while both embeddings define the same combinatorial embedding. This means that for a combinatorial embedding $(\pi_v)_{v \in V(G)}$ the number of accumulation points of an embedding $\iota$ realizing $\pi$ is not well defined. We are, however, interested in embeddings with as few accumulation points as possible.

We proceed to define $\acc(G, \pi)$, the number of accumulation points of a combinatorial embedding, using only the combinatorial data. It will turn out that $\acc(G, \pi)$ is exactly the minimal number of accumulation points needed to realize $\pi$.

Given a cycle on darts we define the inside of that cycle. Let $C$ be a cyclically ordered tuple of distinct darts $(\righte_1, \ldots \righte_n)$ such that $t(\righte_i) = s(\righte_{i+1})=v_{i+1}$ for $1 \leq i \leq n$. Let $\righte \notin C$ be a dart with $s(\righte)= v_i$. 
We say that $\righte$ is \emph{inside} $C$ if $\righte$ is somewhere between $\lefte_{i-1}$ and $\righte_i$ in the cyclic order $\pi_{v_i}$. If $t(\righte) \notin \{v_1, \ldots, v_n\}$ (i.e.\ $\righte$ is not a chord of $C$), then the whole connected component of $t(\righte)$ in the graph $G \setminus C$ is said to be inside $C$. This definition does not depend on the choice of $\righte$ connecting the component to $C$.

The outside of a cycle is defined analogously. Note that inverting all darts along the cycle also inverts the notion of inside and outside. When the graph is finite, a combinatorial face is a cycle that has nothing inside it. We will say a part of a graph is \emph{surrounded} by a cycle if it is inside that cycle. 

Consider a finite connected subgraph $H \subset G$. It is endowed with the combinatorial embedding inherited from $G$. The cyclical order $\pi^H_{v}$ at a vertex $v \in V(H)$ on the adjacent $H$-darts is the restriction of the cyclical order $\pi_v$ on the adjacent $G$-darts. Hence one can also define the faces of $H$, inherited from the combinatorial embedding of $G$. These may not be faces of $G$, of course.

For a connected component of $G \setminus H$ there is a unique face of $H$ that surrounds it. Let $a^{\pi}_H$ denote the number of faces of $H$ that surround at least one infinite component of $G \setminus H$. It is  straightforward to check that if $H \subseteq H'$, where $H'$ is also finite, then $a^{\pi}_H \leq a^{\pi}_{H'}$. This leads us to the following.

\begin{definition}\label{def:comb_acc}
Let $H_1 \subset H_2 \subset \ldots \subset G$ be an exhaustion of $G$ by finite graphs. We define $\acc(G,\pi)=\sup a^{\pi}_{H_i}$. The value of $\acc(G,\pi)$ does not depend on the particular exhaustion because of the monotonicity. When the combinatorial embedding is clear from context, we write $\acc(G)$.
\end{definition}

Any embedding of $G$ into $S^2$ that realizes $\pi$ has at least $\acc(G,\pi)$ accumulation points. On the other hand when $\acc(G,\pi)$ is finite, there exists an embedding realizing $\pi$ with exactly $\acc(G,\pi)$ accumulation points. Indeed, suppose that $a^{\pi}_{H_i}$ stabilizes at $i_0$. We can make sure that accumulation points only occur at the centers of the $\acc(G,\pi)$-many topological faces of $H_{i_0}$ that surround infinitely many points of $G \setminus H_{i_0}$. The reasoning is the same as the last part of the proof of Theorem \ref{theorem:simply_connected_comb_embedding}.

\subsection{The general strategy}

In \cite{timar2023unimodular} the first author shows that URPG's have unimodular combinatorial embeddings into the plane. Notice that when the URPG has one end, \emph{any} combinatorial embedding into $S^2$ gives $\acc(G)=1$. 

Here, in Theorem \ref{theorem:unimodular_combinatorial_embedding_1_acc}, we show that as long as the URPG $(G,o)$ is supported on $\G_{S^2}^*$, it has a unimodular random combinatorial embedding into the plane with $\acc(G) \leq 1$ almost surely.

Recall that an embedding with one topological accumulation point is called \emph{simply connected}. For example the $d$-regular tree $T_d$ ($d \geq 2$) has continuum many ends, but admits simply connected embeddings into $S^2$. On the other hand $K_2 \times T_3$ does not. 

Throughout this Section we consider graphs of increasing generality and build up to a proof of Theorem \ref{theorem:unimodular_combinatorial_embedding_1_acc}. As in \cite{timar2023unimodular}, we construct the random embedding by decomposing our graph into 2-connected, and then further into 3-connected components. We choose random combinatorial embeddings for those parts and glue them together to form a combinatorial embedding of the whole graph. 

The main reason why our construction works is that multi-ended infinite $2$-connected components essentially have a unique combinatorial embedding with $1$ accumulation point whenever they have at least one. (See Lemma \ref{lemma:full_core}.) Technical difficulties arise because we have to keep track of where ends get embedded.

\subsection{Building blocks of graphs: Tutte and block-cut decompositions}

The following are the main tools for constructing a random combinatorial embedding in \cite[Section 2]{timar2023unimodular}. References and a more detailed introduction can be found there, here we try to tighten the exposition.

\subsubsection{3-connected components}
The starting point of the construction is the fact (due to Whitney and Imrich) that a $3$-connected graph has a unique combinatorial embedding, up to inverting all permutations, even in the infinite case.

\begin{example}
To find examples of 3-connected graphs with infinitely many ends that have a locally finite embedding do the following. Start with any (possibly unimodular random) graph with infinitely many ends and a locally finite planar embedding, and assume for simplicity that it has no leaves (e.g.\ regular-, or unimodular Galton-Watson trees). Perform the local operation shown in Figure \ref{fig:3-connected-examples} around every vertex to turn it into a 3-connected graph while preserving the other properties.  
\end{example}

\begin{figure}[ht]
    \centering
	\begin{tikzpicture}[node distance=1cm,
	main node/.style={circle,minimum width=1.9pt, fill, 		inner sep=0pt,outer sep = 0pt},
	red node/.style={circle,minimum width=2.2pt, fill,red, 	inner sep=0pt,outer sep = 0pt},
	blue node/.style={circle,minimum width=2.2pt, 				fill,blue, inner sep=0pt,outer sep = 0pt},
	square node/.style={draw,regular polygon,regular 			polygon sides=4,fill,inner sep=0.9pt,outer sep=0.1pt},
	triangle node/.style={draw,regular polygon,regular 			polygon sides=3,fill,inner sep=0.8pt,outer sep=0.1pt}]   

  \node[main node] (o) at (0,0){};

  \path [draw=gray] (o) edge (-0.66,0.77);
  \path [draw=gray] (o) edge (0.25,0.97);
  \path [draw=gray] (o) edge (0.93,-0.37);
  \path [draw=gray] (o) edge (-0.13,-0.99);
  \path [draw=gray] (o) edge (-0.98,-0.2);
  
  \path [draw=gray, thin, dotted] (-0.99,1.155) edge (-0.66,0.77);
  \path [draw=gray, thin, dotted] (0.375,1.455) edge (0.25,0.97);
  \path [draw=gray, thin, dotted] (1.395,-0.555) edge (0.93,-0.37);
  \path [draw=gray, thin, dotted] (-0.195,-1.485) edge (-0.13,-0.99);
  \path [draw=gray, thin, dotted] (-1.47,-0.3) edge (-0.98,-0.2);
  
  \node[main node] (o1) at (-0.115,0.4865){};
  \node[main node] (o2) at (0.445,0.226){};
  \node[main node] (o3) at (0.253,-0.43){};
  \node[main node] (o4) at (-0.341,-0.365){};
  \node[main node] (o5) at (-0.472,0.164){};
  
  \path [draw=gray, thin] (o1) edge (-0.775,1.2565);
  \path [draw=gray, thin] (o2) edge (0.695,1.196);
  \path [draw=gray, thin] (o3) edge (1.183,-0.8);
  \path [draw=gray, thin] (o4) edge (-0.471,-1.355);
  \path [draw=gray, thin] (o5) edge (-1.452,-0.036);

  \path [draw=gray, thin] (o5) edge (-1.132,0.934);
  \path [draw=gray, thin] (o1) edge (0.135,1.4565);
  \path [draw=gray, thin] (o2) edge (1.375,-0.144);
  \path [draw=gray, thin] (o3) edge (0.123,-1.42);
  \path [draw=gray, thin] (o4) edge (-1.321,-0.565);

  \path [draw=gray, thin] (o) edge (o1);
  \path [draw=gray, thin] (o) edge (o2);
  \path [draw=gray, thin] (o) edge (o3);
  \path [draw=gray, thin] (o) edge (o4);
  \path [draw=gray, thin] (o) edge (o5);

  \draw [thin, gray, -stealth] (-3.5,0) -- (-2.5,0);

\node[main node] (p) at (-6,0){};

  \path [draw=gray] (p) edge (-6.66,0.77);
  \path [draw=gray] (p) edge (-5.75,0.97);
  \path [draw=gray] (p) edge (-5.07,-0.37);
  \path [draw=gray] (p) edge (-6.13,-0.99);
  \path [draw=gray] (p) edge (-6.98,-0.2);
  
  \path [draw=gray, thin, dotted] (-6.99,1.155) edge (-6.66,0.77);
  \path [draw=gray, thin, dotted] (-5.625,1.455) edge (-5.75,0.97);
  \path [draw=gray, thin, dotted] (-4.605,-0.555) edge (-5.07,-0.37);
  \path [draw=gray, thin, dotted] (-6.195,-1.485) edge (-6.13,-0.99);
  \path [draw=gray, thin, dotted] (-7.47,-0.3) edge (-6.98,-0.2);

	\end{tikzpicture}
    \caption{Local operation ensuring 3-connectedness}
    \label{fig:3-connected-examples}
\end{figure}

\subsubsection{Tutte-decomposition}
Any $2$-connected graph $G$ has a unique Tutte-decomposition into $3$-connected components, 3-links and cycles. A 3-link is a graph consisting of 3 parallel edges.

By the \emph{amalgam} of two graphs along an edge we mean taking their disjoint union, picking an edge in both (together with a bijection between their endpoints), deleting those edges, and identifying the endpoints. Amalgamation of graphs $A$ and $B$ is denoted by $A+B$, the edge will be clear from context.

Informally speaking, the Tutte decomposition describes a treelike way of obtaining $G$ by amalgamating $3$-connected components and cycles, possibly keeping (1 copy of) the edge we are amalgamating along. We can achieve this by inserting a $3$-link between two graphs during the amalgamating procedure whenever we want to keep the common edge. See Figure \ref{fig:tutte_decomp_ladder} for the Tutte decomposition of the infinite ladder.

\begin{figure}[h]
     \centering
         \begin{tikzpicture}[node distance=1cm,
main node/.style={circle,minimum width=1.9pt, fill, inner sep=0pt,outer sep = 0pt},
red node/.style={circle,minimum width=2.2pt, fill,red, inner sep=0pt,outer sep = 0pt},
blue node/.style={circle,minimum width=2.2pt, fill,blue, inner sep=0pt,outer sep = 0pt},
square node/.style={draw,regular polygon,regular polygon sides=4,fill,inner sep=0.9pt,outer sep=0.1pt},
triangle node/.style={draw,regular polygon,regular polygon sides=3,fill,inner sep=0.8pt,outer sep=0.1pt}]   

  \node (d) at (-3,0)[label=center:{$\cdots$}]{};
  \node (e) at (3,0)[label=center:{$\cdots$}]{};
  \node[main node] (a) at (-1.4,0.7){};
  \node[main node] (b) at (0,0.7){};
  \node[main node] (c) at (1.4,0.7){};
  \node[main node] (u) at (-1.4,-0.7){};
  \node[main node] (v) at (0,-0.7){};
  \node[main node] (w) at (1.4,-0.7){};

  \path [draw=gray, thin] (a) edge (b);
  \path [draw=gray, thin] (b) edge (c);
  \path [draw=gray, thin] (a) edge (u);
  \path [draw=gray, thin] (b) edge (v);
  \path [draw=gray, thin] (c) edge (w);
  \path [draw=gray, thin] (u) edge (v);
  \path [draw=gray, thin] (v) edge (w);
  \path [draw=gray, thin] (-2.8,0.7) edge (a);
  \path [draw=gray, thin] (-2.8,-0.7) edge (u);
  \path [draw=gray, thin] (c) edge (2.8,0.7);
  \path [draw=gray, thin] (w) edge (2.8,-0.7);

  \draw [thin, gray, -stealth] (0,-1.1) -- (0,-1.9);

  \node[main node] (A1) at (-4.4,-2.3){};
  \node[main node] (A2) at (-5.8,-2.3){};
  \node[main node] (A3) at (-4.4,-3.7){};
  \node[main node] (A4) at (-5.8,-3.7){};
  \node (A) at (-5.1, -4.2) [label=center:{$G_{\alpha}$}]{};

  \node[main node] (X1) at (-3.4,-2.3){};
  \node[main node] (X2) at (-3.4,-3.7){};
  \node (X) at (-3.4, -4.2) [label=center:{$G_{\beta}$}]{};

  \node[main node] (B1) at (-1,-2.3){};
  \node[main node] (B2) at (-2.4,-2.3){};
  \node[main node] (B3) at (-1,-3.7){};
  \node[main node] (B4) at (-2.4,-3.7){};
  
  \node[main node] (Y1) at (0,-2.3){};
  \node[main node] (Y2) at (0,-3.7){};

  \node[main node] (C1) at (1,-2.3){};
  \node[main node] (C2) at (2.4,-2.3){};
  \node[main node] (C3) at (1,-3.7){};
  \node[main node] (C4) at (2.4,-3.7){};
  
  \node[main node] (Z1) at (3.4,-2.3){};
  \node[main node] (Z2) at (3.4,-3.7){};

  \node[main node] (D1) at (4.4,-2.3){};
  \node[main node] (D2) at (5.8,-2.3){};
  \node[main node] (D3) at (4.4,-3.7){};
  \node[main node] (D4) at (5.8,-3.7){};

  \path [draw=gray, thin] (A1) edge (A2);
  \path [draw=gray, thin] (A2) edge (A4);
  \path [draw=gray, thin] (A4) edge (A3);
  \path [draw=blue, thin] (A3) edge (A1);

  \path [draw=gray, thin] (B1) edge (B2);
  \path [draw=red, thin] (B2) edge (B4);
  \path [draw=gray, thin] (B4) edge (B3);
  \path [draw=green, thin] (B3) edge (B1);

  \path [draw=gray, thin] (C1) edge (C2);
  \path [draw=purple, thin] (C2) edge (C4);
  \path [draw=gray, thin] (C4) edge (C3);
  \path [draw=orange, thin] (C3) edge (C1);

  \path [draw=gray, thin] (D1) edge (D2);
  \path [draw=gray, thin] (D2) edge (D4);
  \path [draw=gray, thin] (D4) edge (D3);
  \path [draw=brown, thin] (D3) edge (D1);
  
  \path [draw=blue, thin] (X1) to [bend right] (X2);
  \path [draw=gray, thin] (X1) edge (X2);
  \path [draw=red, thin] (X1) to [bend left] (X2);

  \path [draw=green, thin] (Y1) to [bend right] (Y2);
  \path [draw=gray, thin] (Y1) edge (Y2);
  \path [draw=orange, thin] (Y1) to [bend left] (Y2);

  \path [draw=purple, thin] (Z1) to [bend right] (Z2);
  \path [draw=gray, thin] (Z1) edge (Z2);
  \path [draw=brown, thin] (Z1) to [bend left] (Z2);

  \draw [thin, gray, -stealth] (-4.2,-2.5) -- (-3.7,-2.5);
  \node (AX) at (-4,-2) [label=center:{$f(\alpha,\beta)$}]{};

  \node (D) at (-6.8,-3)[label=center:{$\cdots$}]{};
  \node (E) at (6.8,-3)[label=center:{$\cdots$}]{};

\end{tikzpicture}
        \caption{The Tutte decomposition of the infinite ladder}
        \label{fig:tutte_decomp_ladder}
\end{figure}

More precisely, the Tutte decomposition gives a tree $T$ with vertices $\alpha \in V_T$ labelled by graphs $G_\alpha$, and edges $(\alpha,\beta)$ labelled by functions $f(\alpha,\beta)$. The $G_{\alpha}$ are 3-connected, 3-links, or cycles, and the function $f(\alpha,\beta)$ picks an edge from $G_{\alpha}$ and $G_{\beta}$, and a bijection between their endpoints. Moreover, each edge of each $G_{\alpha}$ is picked by at most one $f(\alpha,\beta)$. The order of amalgamations is arbitrary, and we denote the graph obtained in the end by $\Gamma(T)$. We say an edge in some $G_{\alpha}$ is \emph{virtual}, if it is selected by some $f(\alpha,\beta)$. An edge of some $G_{\alpha}$ is present in $\Gamma(T)$ if and only if it is not virtual. As amalgams of cycles are cycles, we also assume that cycles cannot be neighbors in the Tutte tree. This guarantees uniqueness of the decomposition.

\subsubsection{Block-cut decompositions}

Finally, if $G$ is a locally finite, connected planar graph, it has a block-cut tree decomposition into 2-connected components. The 2-connected components (blocks) $G_a$ are indexed by some set $A$. The cut vertices form the set $C$. The vertex set of the block-cut tree $\mathcal{T}$ is  $A \cup C$, and $a \in A$ is connected to $v \in C$ by an edge if $v \in V(G_a)$. In $\mathcal{T}$ the vertices $a,b \in A$ are at distance 2 if and only if $G_{a}$ and $G_{b}$ have a (unique) common vertex.

\subsection{2-connected graphs with full core}

In this subsection we define the \emph{core} of 2-connected infinite graphs, and investigate combinatorial embeddings of such graphs with {\it full core}. 

\begin{definition*}
Let $G$ be an infinite, 2-connected graph and $T=(V_T, E_T)$ the Tutte tree with $\Gamma(T)=G$. Let $V_{\infty} \subseteq V_T$ denote the set of vertices that represent infinite 3-connected Tutte components. Let $\core(T)$ denote the convex hull of $V_{\infty} \cup \Ends(T)$. That is, a vertex $\alpha \in V_T$ is in $\core(T)$ if and only if $\alpha$ is either contained in a path between two vertices in $V_{\infty}$ (allowing also the case $\alpha \in V_{\infty}$), or contained in an infinite path starting at a vertex in $V_{\infty}$, or contained in a bi-infinite path. Let $\core(G)=\Gamma(\core(T))$.
\end{definition*}

\begin{remark}
Note that $\core(T)$ can be obtained by deleting all finite parts of $T$ that contain no vertices of $V_{\infty}$ and can be separated from $T$ by deleting an edge.
\end{remark}

\begin{example}
We have seen the Tutte decomposition of the infinite ladder in Figure \ref{fig:tutte_decomp_ladder}. In this case the Tutte tree $T$ is a bi-infinite path, so $T=\core (T)$, and consequently $G=\core(G)$. 
\end{example}

\begin{example}
Another 2-connected example with infinitely many ends can be seen in Figure \ref{fig:2-connected_example}. The core is the 4-regular-tree-like part drawn with black, while the rest of the graph, consisting of finite components amalgamated at some places, is drawn with red.
\end{example}

\begin{figure}[h]
     \centering
         \begin{tikzpicture}[node distance=1cm,
main node/.style={circle,minimum width=1.9pt, fill, inner sep=0pt,outer sep = 0pt},
red node/.style={circle,minimum width=2.2pt, fill,red, inner sep=0pt,outer sep = 0pt},
blue node/.style={circle,minimum width=2.2pt, fill,blue, inner sep=0pt,outer sep = 0pt},
square node/.style={draw,regular polygon,regular polygon sides=4,fill,inner sep=0.9pt,outer sep=0.1pt},
triangle node/.style={draw,regular polygon,regular polygon sides=3,fill,inner sep=0.8pt,outer sep=0.1pt}]   

  \node[main node] (A1) at (0,0){};
  \node[main node] (A2) at (0,1){};
  \node[main node] (A3) at (1,0){};
  \node[main node] (A4) at (1,1){};

  \node[main node] (B1) at (2.25,0.25){};
  \node[main node] (B2) at (2.25,0.75){};
  \node[main node] (B3) at (2.75,0.25){};
  \node[main node] (B4) at (2.75,0.75){};
  
  \node[main node] (C1) at (0.25,2.25){};
  \node[main node] (C2) at (0.25,2.75){};
  \node[main node] (C3) at (0.75,2.25){};
  \node[main node] (C4) at (0.75,2.75){};
  
  \node[main node] (D1) at (-1.75,0.25){};
  \node[main node] (D2) at (-1.75,0.75){};
  \node[main node] (D3) at (-1.25,0.25){};
  \node[main node] (D4) at (-1.25,0.75){};  

  \node[main node] (E1) at (0.25,-1.75){};
  \node[main node] (E2) at (0.25,-1.25){};
  \node[main node] (E3) at (0.75,-1.75){};
  \node[main node] (E4) at (0.75,-1.25){};

  \path [draw=gray, thin] (A1) edge (A2);
  \path [draw=gray, thin] (A1) edge (A3);
  \path [draw=gray, thin] (A4) edge (A2);  
  \path [draw=gray, thin] (A4) edge (A3);
  
  \path [draw=gray, thin] (B1) edge (B2);
  \path [draw=gray, thin] (B1) edge (B3);
  \path [draw=gray, thin] (B4) edge (B2);  
  \path [draw=gray, thin] (B4) edge (B3);
  
  \path [draw=gray, thin] (C1) edge (C2);
  \path [draw=gray, thin] (C1) edge (C3);
  \path [draw=gray, thin] (C4) edge (C2);  
  \path [draw=gray, thin] (C4) edge (C3);

  \path [draw=gray, thin] (D1) edge (D2);
  \path [draw=gray, thin] (D1) edge (D3);
  \path [draw=gray, thin] (D4) edge (D2);  
  \path [draw=gray, thin] (D4) edge (D3);

  \path [draw=gray, thin] (E1) edge (E2);
  \path [draw=gray, thin] (E1) edge (E3);
  \path [draw=gray, thin] (E4) edge (E2);  
  \path [draw=gray, thin] (E4) edge (E3);

  \path [draw=gray, thin] (A1) edge (E2);
  \path [draw=gray, thin] (A3) edge (E4);

  \path [draw=gray, thin] (A1) edge (D3);
  \path [draw=gray, thin] (A2) edge (D4);

  \path [draw=gray, thin] (A2) edge (C1);
  \path [draw=gray, thin] (A4) edge (C3);

  \path [draw=gray, thin] (A4) edge (B2);
  \path [draw=gray, thin] (A3) edge (B1);
  
  \node[red node] (X1) at (0.6,0.7){};
  \node[red node] (X2) at (0.6,0.3){};  
  \node[red node] (Y) at (-0.5,1.1){};
  \node[red node] (Z) at (0.5,-0.5){};

  \path [draw=red, thin] (A4) edge (X1);
  \path [draw=red, thin] (X2) edge (X1);  
  \path [draw=red, thin] (A3) edge (X2);  
  \path [draw=red, thin] (A2) edge (Y);
  \path [draw=red, thin] (D4) edge (Y);
  \path [draw=red, thin] (A1) edge (Z);
  \path [draw=red, thin] (E2) edge (Z);    

  \node (E) at (-0.25,-1.5)[label=center:{$\cdots$}]{};
  \node (E') at (0.5,-2.2)[label=center:{$\vdots$}]{};
  \node (E'') at (1.3,-1.5)[label=center:{$\cdots$}]{};

  \node (C) at (-0.25, 2.5)[label=center:{$\cdots$}]{};
  \node (C') at (0.5, 3.4)[label=center:{$\vdots$}]{};
  \node (C'') at (1.3, 2.5)[label=center:{$\cdots$}]{};

  \node (B) at (3.35, 0.5)[label=center:{$\cdots$}]{};
  \node (B') at (2.5,1.4)[label=center:{$\vdots$}]{};
  \node (B'') at (2.5,-0.2)[label=center:{$\vdots$}]{};
  
  \node (D) at (-2.25, 0.5)[label=center:{$\cdots$}]{};
  \node (D') at (-1.5,1.4)[label=center:{$\vdots$}]{};
  \node (D'') at (-1.5,-0.2)[label=center:{$\vdots$}]{};  

\end{tikzpicture}
        \caption{A $2$-connected graph with infinitely many ends and a simply connected embedding}
        \label{fig:2-connected_example}
\end{figure}

\begin{lemma} \label{lemma:full_core}
Let $G$ be an infinite, $2$-connected planar graph such that $\core(G) = G$. Assume also that $G$ has a simply connected embedding into $S^2$. Then (up to inverting all permutations) it has a unique combinatorial embedding with $\acc(G)=1$.
\end{lemma}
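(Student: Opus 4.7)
The plan is to leverage the Tutte decomposition of $G$ into $3$-connected components, cycles, and $3$-links, and to show that the condition $\acc(G,\pi) = 1$ forces rigidity on each Tutte component together with its gluings. Let $T$ be the Tutte tree and let $(G_\alpha)_{\alpha \in V_T}$ be the components.

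First, I would use the hypothesis $\core(G) = G$ to observe that for every $\alpha \in V_T$ and every Tutte-neighbor $\beta$ of $\alpha$, the Tutte subtree of $T$ lying beyond the edge $(\alpha,\beta)$ contains a vertex of $V_\infty$ or an end of $T$. Consequently, the subgraph $G_{\beta\to\alpha}$ obtained by amalgamating all components in that subtree is infinite, and it is attached to $G_\alpha$ only through one virtual edge $e_{\alpha,\beta}$.

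Next, for any combinatorial embedding $\pi$ of $G$ with $\acc(G,\pi)\leq 1$, I would introduce a distinguished face $F_\alpha^\pi$ of $\pi_\alpha := \pi|_{G_\alpha}$. Realizing $\pi$ as an embedding into $S^2$ with a single accumulation point $x$, the finite graph $G_\alpha$ has all faces homeomorphic to open disks, and $x$ lies in a unique one; let $F_\alpha^\pi$ be that face. By the previous observation, each $G_{\beta\to\alpha}$ is infinite and must accumulate at $x$, so every amalgamated subgraph is embedded inside $F_\alpha^\pi$, and in particular every virtual edge $e_{\alpha,\beta}$ lies on $\partial F_\alpha^\pi$.

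Third, I would establish rigidity of each Tutte component. When $G_\alpha$ is $3$-connected the combinatorial embedding is unique up to inversion by the Whitney-Imrich theorem. When $G_\alpha$ is a $3$-link the combinatorial embedding is also unique up to inversion, and the constraint that every virtual edge lie on $\partial F_\alpha^\pi$ together with the hypothesis that a simply connected embedding of $G$ exists forces the local configuration. When $G_\alpha$ is a cycle the cyclic orders at individual vertices are automatic, and the $\acc = 1$ condition forces every amalgamation to happen on the same side of the cycle (namely inside $F_\alpha^\pi$).

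Finally, I would propagate orientations across the Tutte tree. Fix the orientation (choice of inversion) of $\pi_{\alpha_0}$ at some $\alpha_0 \in V_T$. For each Tutte-edge $(\alpha,\beta)$, once $\pi_\alpha$ and $F_\alpha^\pi$ are known, the analogous argument from the side of $\beta$ identifies $F_\beta^\pi$ as the unique face of $\pi_\beta$ that contains the amalgamated subgraph $G_{\alpha\to\beta}$. Combined with the rigidity of $G_\beta$, this pins down $\pi_\beta$ and the gluing between $G_\alpha$ and $G_\beta$. Induction along $T$ then shows that $\pi$ is uniquely determined by the orientation at $\alpha_0$; the two available choices at $\alpha_0$ propagate to two embeddings that differ by global inversion, which yields the claimed uniqueness.

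The main obstacle I expect is the cycle case. While it is easy to see that all amalgamations at a cycle $G_\alpha$ must sit on the same side, arguing that the cyclic order in which successive Tutte-neighbors are glued at shared cycle vertices is also fully determined by $F_\alpha^\pi$ is more delicate, since at a vertex where two consecutive cycle edges are both virtual, darts coming from two different amalgamated blocks must be interleaved in a prescribed way. The resolution should come from tracing the boundary walk of $F_\alpha^\pi$ consistently through successive amalgamations, using planarity of $\pi$ and the fact that every amalgamated piece sits in the common face $F_\alpha^\pi$.
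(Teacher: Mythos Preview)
Your outline shares the paper's foundation (Tutte decomposition plus Whitney--Imrich rigidity of the $3$-connected pieces), but you attempt a direct propagation of orientations along $T$, whereas the paper argues by contradiction: it fixes the reference embedding $\pi^G_+$, assumes some $\pi^G$ mixes $+$ and $-$ on adjacent Tutte components $\alpha,\beta$, and then \emph{explicitly constructs} a cycle $F'$ in $G_\alpha + G_\beta$ with infinitely many vertices (or a virtual edge to an infinite part) on both sides, forcing $\acc(G,\pi^G)\geq 2$.

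There are two genuine gaps in your proposal. First, your definition of the distinguished face $F_\alpha^\pi$ assumes $G_\alpha$ is finite, but Tutte components can be infinite (this is exactly the set $V_\infty$ in the definition of $\core$); the paper treats the infinite case separately, replacing the virtual edges $f,f'$ by the infinite vertex sets of the components themselves. Second, and more seriously, your propagation step is the entire content of the lemma, and you have not justified it. You write that once $\pi_\alpha$ and $F_\alpha^\pi$ are known this ``pins down $\pi_\beta$ and the gluing'', but rigidity only gives two candidates for $\pi_\beta$, and you must show that the wrong choice violates $\acc=1$. That is precisely what the paper's separating-cycle construction does; without an analogous argument you are simply restating the conclusion.

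Finally, the obstacle you flag (interleaving of darts at cycle vertices) is not a real issue: by \cite[Lemma 13]{timar2019unimodular} the global combinatorial embedding $\pi^G$ is already uniquely determined by the collection $(\pi^\alpha)_{\alpha\in V_T}$, so there is no additional gluing freedom at the amalgamation edges. Cycles have a unique combinatorial embedding in any case ($\pi^\alpha_+ = \pi^\alpha_-$), which is why the paper only worries about adjacent non-cycle components, allowing a single cycle in between.
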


\begin{proof}
Let $T$ denote the Tutte tree of $G$, so $G= \Gamma(T)$. Since $G$ is planar, all the Tutte components are planar. Also any amalgam of these obtained by performing some amalgamations as prescribed by $T$ is planar. 

Most importantly, as it is proved in \cite[Lemma 12]{timar2023unimodular}, a combinatorial embedding $\pi^{G}$ of $G$ is uniquely determined by its restrictions to the Tutte components. We denote these restrictions $(\pi^{\alpha})_{\alpha \in V(T)}$. For $3$-connected components and $3$-links we have $2$ possible combinatorial embeddings, while for cycles we have $1$.

We know that $G$ has a simply connected embedding. Pick one and let $\pi^{G}_{+}$ denote the corresponding combinatorial embedding. Accordingly, the restrictions are denoted $\pi^{\alpha}_{+}$. When $G_{\alpha}$ is a $3$-connected component or a $3$-link, let $\pi^{\alpha}_{-}$ denote the other combinatorial embedding of $G_{\alpha}$. If $G_{\alpha}$ is a cycle, we put $\pi^{\alpha}_{-}=\pi^{\alpha}_{+}$. Let $\pi^{G}_{-}$ denote the combinatorial embedding of $G$ with restrictions $\pi^{\alpha}_{-}$ for all $\alpha \in V(T)$. Since $\pi^{G}_{-}$ is obtained from $\pi^{G}_{+}$ by inverting all permutations, it also belongs to a simply connected embedding of $G$.

As we have said, to specify a combinatorial embedding $\pi^G$ of $G$ one has to make a choice between $\pi^{\alpha}_{+}$ and $\pi^{\alpha}_{-}$ for all $\alpha \in V(T)$. (For cycles this means no choice.) We claim that from these possible collections of choices
only two give $\acc(G,\pi^G)=1$, namely $\pi^{G}_{+}$ where we choose $\pi^{\alpha}_{+}$ for all $\alpha$, and $\pi^{G}_{-}$, where we choose $\pi^{\alpha}_{-}$ for all $\alpha$. 

Suppose $\pi^{G}$ is neither $\pi^{G}_{+}$ nor $\pi^{G}_{-}$. We aim to find a cycle in $(G, \pi^G)$ which has infinitely many points both on the inside and outside. Such a cycle shows $ \acc(G,\pi^G)\geq 2$.
Now, we can find $G_{\alpha}$ and $G_{\beta}$, $3$-connected components or $3$-links in the Tutte decomposition such that $\pi^{\alpha} = \pi^{\alpha}_{+}$ and $\pi^{\beta}=\pi^{\beta}_{-}$. Moreover, we can pick $\alpha$ and $\beta$ to be neighbors in $T$, or at worst at distance two in $T$ with some $\gamma$ in between, with $G_{\gamma}$ a cycle. Assume first that $\alpha$ and $\beta$ are neighbors. The proof will not be essentially different when there is $\gamma$ in between. 

Consider the case when both $G_{\alpha}$ and $G_{\beta}$ are finite. 
Let $e$ denote the virtual edge of $G_{\alpha}$ that we use to amalgamate it to $G_{\beta}$. As $\alpha \in \core (T)$, we have another virtual edge of $G_{\alpha}$, say $f$. Let $T_e$ and $T_f$ denote the components of $T \setminus \alpha$ corresponding to $e$ and $f$. We know that $\Gamma(T_e)$ and $\Gamma(T_f)$ are both infinite by our assumption that $G=\core(G)$ and consequently $T= \core(T)$. Let $E=(\righte, \righte_2, \ldots , \righte_n)$ and $F = (\lefte, \rightf_2, \ldots, \rightf_k)$ denote the two faces of $G_{\alpha}$ adjacent to $e$. Set $v_1=s(\righte)$ and $v_2=t(\righte)$.

The only edge that $E$ and $F$ share is $e$, hence
\[C:=E + F=(\righte_2, \ldots, \righte_n, \rightf_2, \ldots, \rightf_k)\] is a cycle in $G_{\alpha}$. The cycle $C$ surrounds $e$, therefore when $G_{\beta}$ is amalgamated to $G_{\alpha}$ along $e$, the whole of $G_{\beta} \setminus e$ will be inside $C$.

\begin{figure}[h] \label{figure:alpha}
	\centering 
		\includegraphics[height=6cm]{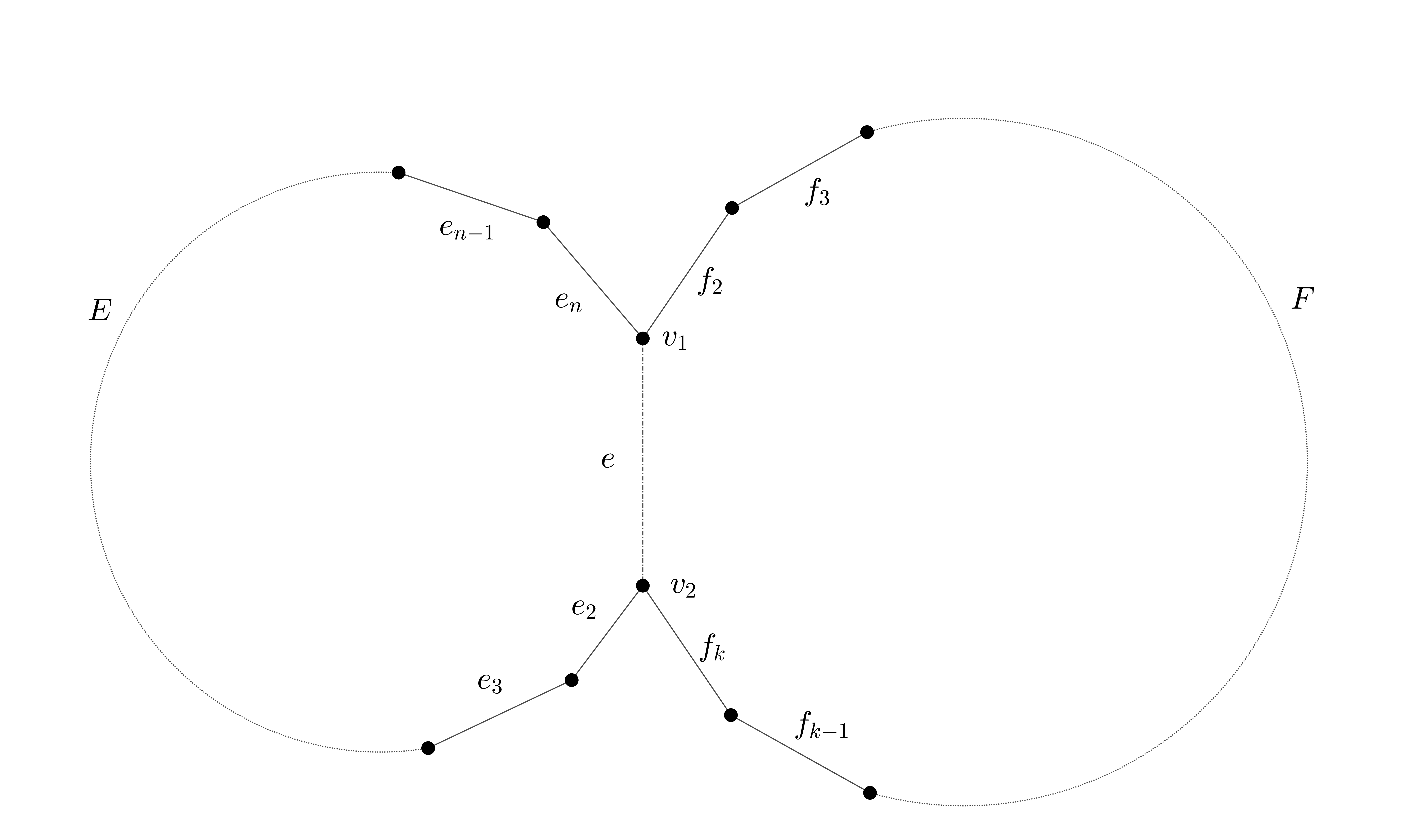}		
		\caption{The faces $E$ and $F$ in $G_{\alpha}, \pi^{\alpha}_+$}
\end{figure}

Similar to $G_{\alpha}$, the Tutte component $G_{\beta}$ also has at least one other virtual edge $f'$ apart from $e$. Clearly $f \neq f'$, as the only edge shared by $G_{\alpha}$ and $G_{\beta}$ is $e$. What happens to the faces $E$ and $F$ when we amalgamate along $e$? Because $G_{\beta}$ is either $3$-connected or a $3$-link we can find three edge-disjoint paths in $G_{\beta}$ between the endpoints of $e$. At least one of these avoids both $e$ and $f'$, let us denote it by $P=(\rightp_1,\ldots, \rightp_l)$.  With slight abuse of notation we use $v_1=s(\rightp_1)$ and $v_2=t(\rightp_l)$ to denote the endpoints of $e$ that are both in $G_\beta$ and $G_\alpha$. Note that $P$ might contain other virtual edges of $\beta$, different from $e$ and $f'$, but that will not cause problems for us. 

In $G_{\alpha} + G_{\beta}$ the edge $e$ is deleted, so the $G_{\alpha}$-faces $E$ and $F$ merge into the cycle $C = E + F$, but the path $P$ splits it into two cycles $E'$ and $F'$ in $G_{\alpha} + G_{\beta}$. (Recall that $P \subseteq G_\beta$ is inside $C$.) To be precise let $E'=(\rightp_1, \ldots, \rightp_l, \righte_2, \ldots, \righte_n)$ and $F'=(\leftp_l, \ldots, \leftp_1, \rightf_2, \ldots, \rightf_k)$. 
 
We know that $f'$ is inside $C$ and not a part of $P$, so it is inside either $E'$ or $F'$, depending on the choice of combinatorial embedding $\pi^{\beta}$. Assume that when we amalgamate according to $\pi^{\beta}_{+}$ the $\beta$-edge $f'$ falls inside the $E'$ cycle, and when we amalgamate according to $\pi^{\beta}_{-}$ it falls inside $F'$.

\begin{figure}[h] 
		\centering 
		\includegraphics[height=6cm]{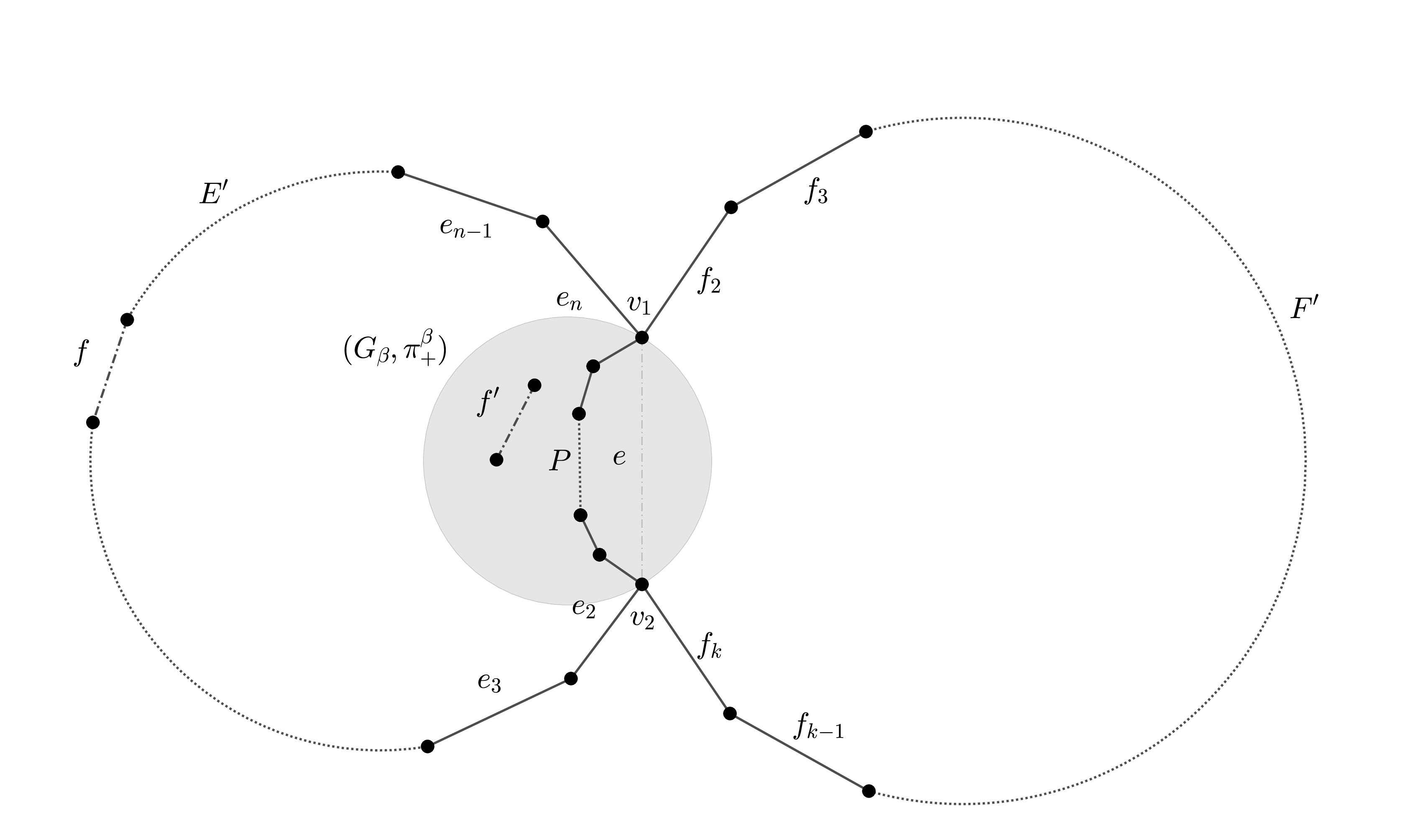}
		\caption{The amalgam $(G_{\alpha}+G_{\beta}, \pi^{\alpha}_+, \pi^{\beta}_+)$}
		\label{figure:alphabeta}
\end{figure}

When we amalgamate according to $\pi^{G}_+$, in $G_{\alpha} + G_{\beta}$ the virtual edges $f$ and $f'$ must not be separated by a cycle, because that would indicate $\acc(G,\pi_{+}^G) \geq 2$. This means that $f$ also has to be along the face $E$ in $G_{\alpha}$, otherwise $E'$ would separate them in $G_{\alpha}+G_{\beta}$. See Figure \ref{figure:alphabeta}. 

We claim that $f$ cannot be part of the face $F$ in $G_{\alpha}$. Indeed, in a $3$-connected planar graph one cannot find two edges that have two common faces. This also holds for the $3$-link. If $f$ was incident to $F$, then $e$ and $f$ would be such edges.

We use the fact that $f$ is not incident to $F$ in $G_{\alpha}$ to establish $\acc(G,\pi^G)\geq 2$ when we amalgamate according to $\pi^{\alpha}_{+}$ and $\pi^{\beta}_{-}$. In this case we have $f'$ surrounded by the cyclic walk $F'$, see Figure \ref{figure:alphabeta_neg}.
Therefore it is separated from $f$ in $G_{\alpha} + G_{\beta}$, since $f$ is along $E'$, outside $F'$. Irrespective of the other choices of $\pi^{G}$ the two separated virtual edges already indicate $\acc(G,\pi^G)\geq 2$. This finishes the proof in the finite case.

\begin{figure}[h] 
		\centering 
		\includegraphics[height=6cm]{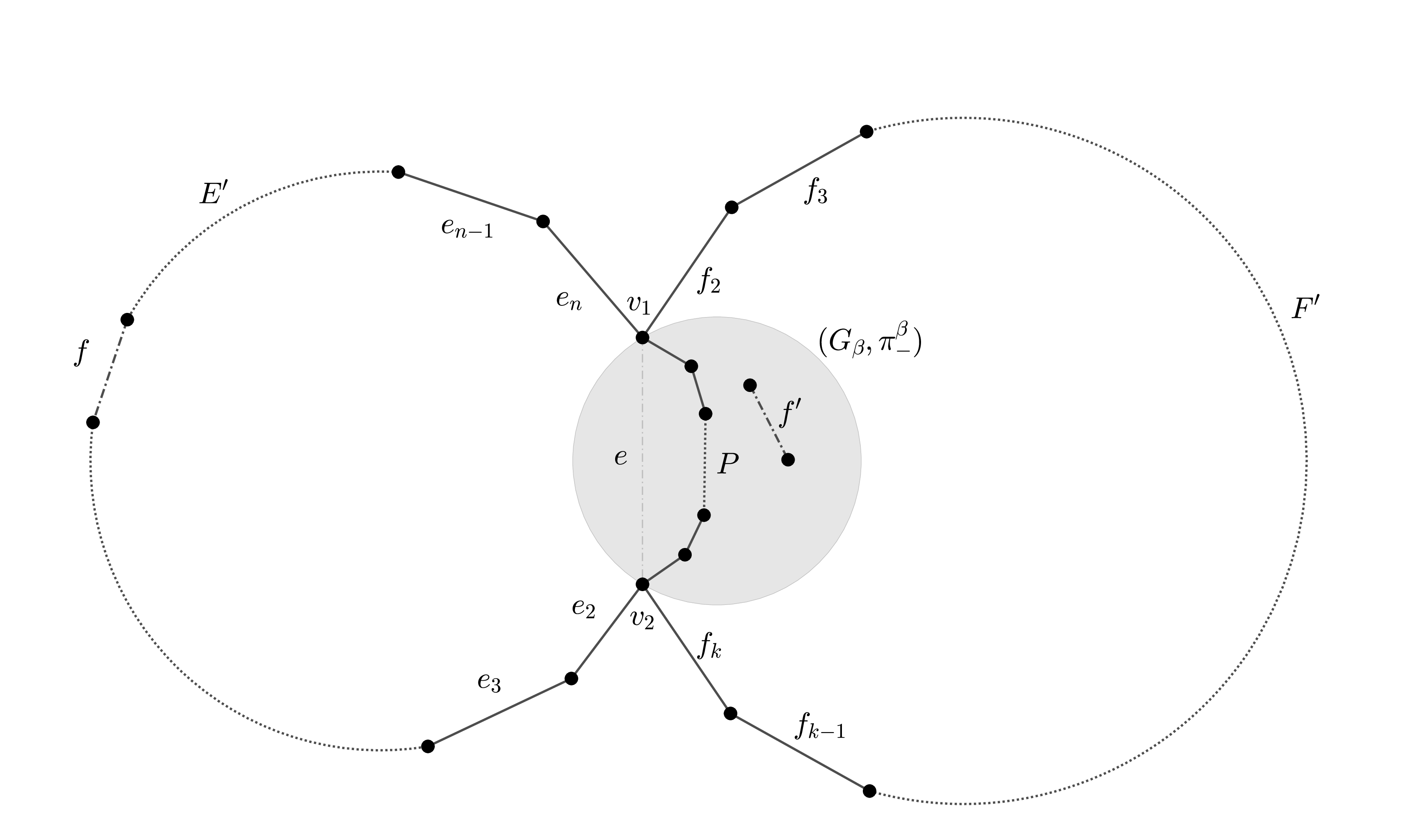}
		\caption{The amalgam $(G_{\alpha}+G_{\beta}, \pi^{\alpha}_+, \pi^{\beta}_{-})$}
		\label{figure:alphabeta_neg}
\end{figure}

If at least one of $G_{\alpha}$ and $G_{\beta}$ is infinite, we can assume without loss of generality that $G_{\beta}$ is infinite. In $G_{\alpha}$, using the $3$-connectedness (without assuming finiteness) we find two disjoint directed paths $\overrightarrow{P}_1$ and $\overrightarrow{P}_2$ evading $e$ from $v_2$ to $v_1$, and form the two cycles $E=(\righte, \overrightarrow{P}_1)$ and $F=(\lefte, \overleftarrow{P}_2)$. We proceed the same way as in the finite case. We either find a virtual edge $f$ of $G_{\alpha}$, or $G_{\alpha}$ is infinite as well. In this case it has infinitely many points, almost all of which have to fall inside $C = (\overrightarrow{P}_1, \overleftarrow{P}_2)$. The argument leads to $F'$ separating infinitely many points of $G_{\beta}$ from infinitely many points of $G_{\alpha}$ (or $f$).

Finally if $\alpha$ and $\beta$ are at distance two with $\gamma$ in between,  we proceed almost identically. We consider $G_{\alpha}$ and $G_{\beta} + G_{\gamma}$, and since $G_{\gamma}$ is a cycle, $G_{\beta} + G_{\gamma}$ is a subdivision of $G_{\beta}$. Let $e$ be the common virtual edge of $G_{\alpha}$ and $G_{\gamma}$, and $e'$ the common virtual edge of $G_{\beta}$ and $G_{\gamma}$. When choosing the path $P$ in $G_{\beta} + G_{\gamma}$ to replace $e$ in the $G_{\alpha}$-cycles $E$ and $F$ we first find the path $P_0$ in $G_{\beta}$ evading $e'$ and $f'$, and then complement it with $G_{\gamma} \setminus \{e,e'\}$ to get a path connecting the endpoints of $e$.
\end{proof}

\subsection{General 2-connected planar graphs} \label{subsection:general_2-connected}

Let $G$ be a 2-connected planar graph with at least $2$ ends that has a simply connected embedding. Let $T$ denote its Tutte tree. Then $\core(T)$ is nonempty, and $\core(G)$ also has a simply connected embedding. By Lemma \ref{lemma:full_core} there are exactly two combinatorial embeddings with $\acc(\core(G))=1$, using the fact that $\core (G)$ is always connected. As before, we denote the induced combinatorial embeddings on the Tutte components by $\pi^{\alpha}_+$ and $\pi^{\alpha}_-$ where $\alpha \in \core(T)$.

In general a combinatorial embedding $\pi^G$ gives $ \acc(G,\pi^{G})=1$ if and only if $\pi^{\alpha} = \pi^{\alpha}_+$ for all $\alpha \in \core(T)$, or $\pi^{\alpha} = \pi^{\alpha}_-$ for all $\alpha \in \core(T)$. For the vertices $\beta \in V(T) \setminus \core(T)$, in which case $G_{\beta}$ is always finite, the choice of $\pi^{\beta}$ does not influence  $\acc (G,\pi^G)=1$.

\begin{example}
Consider the $2$-connected graph shown in Figure \ref{fig:2-connected_example}. Lemma \ref{lemma:full_core} implies that the combinatorial embedding (with $\acc(G)=1$) is essentially unique for the core. On the other hand at every red component we have a choice between two possible ways of embedding, see Figure \ref{fig:2-connected_example_revisited}. 
\end{example}

\begin{figure}[h]
     \centering
         \begin{tikzpicture}[node distance=1cm,
main node/.style={circle,minimum width=1.9pt, fill, inner sep=0pt,outer sep = 0pt},
red node/.style={circle,minimum width=2.2pt, fill,red, inner sep=0pt,outer sep = 0pt},
blue node/.style={circle,minimum width=2.2pt, fill,blue, inner sep=0pt,outer sep = 0pt},
square node/.style={draw,regular polygon,regular polygon sides=4,fill,inner sep=0.9pt,outer sep=0.1pt},
triangle node/.style={draw,regular polygon,regular polygon sides=3,fill,inner sep=0.8pt,outer sep=0.1pt}]   

  \node[main node] (A1) at (0,0){};
  \node[main node] (A2) at (0,1){};
  \node[main node] (A3) at (1,0){};
  \node[main node] (A4) at (1,1){};

  \node[main node] (B1) at (2.25,0.25){};
  \node[main node] (B2) at (2.25,0.75){};
  \node[main node] (B3) at (2.75,0.25){};
  \node[main node] (B4) at (2.75,0.75){};
  
  \node[main node] (C1) at (0.25,2.25){};
  \node[main node] (C2) at (0.25,2.75){};
  \node[main node] (C3) at (0.75,2.25){};
  \node[main node] (C4) at (0.75,2.75){};
  
  \node[main node] (D1) at (-1.75,0.25){};
  \node[main node] (D2) at (-1.75,0.75){};
  \node[main node] (D3) at (-1.25,0.25){};
  \node[main node] (D4) at (-1.25,0.75){};  

  \node[main node] (E1) at (0.25,-1.75){};
  \node[main node] (E2) at (0.25,-1.25){};
  \node[main node] (E3) at (0.75,-1.75){};
  \node[main node] (E4) at (0.75,-1.25){};

  \path [draw=gray, thin] (A1) edge (A2);
  \path [draw=gray, thin] (A1) edge (A3);
  \path [draw=gray, thin] (A4) edge (A2);  
  \path [draw=gray, thin] (A4) edge (A3);
  
  \path [draw=gray, thin] (B1) edge (B2);
  \path [draw=gray, thin] (B1) edge (B3);
  \path [draw=gray, thin] (B4) edge (B2);  
  \path [draw=gray, thin] (B4) edge (B3);
  
  \path [draw=gray, thin] (C1) edge (C2);
  \path [draw=gray, thin] (C1) edge (C3);
  \path [draw=gray, thin] (C4) edge (C2);  
  \path [draw=gray, thin] (C4) edge (C3);

  \path [draw=gray, thin] (D1) edge (D2);
  \path [draw=gray, thin] (D1) edge (D3);
  \path [draw=gray, thin] (D4) edge (D2);  
  \path [draw=gray, thin] (D4) edge (D3);

  \path [draw=gray, thin] (E1) edge (E2);
  \path [draw=gray, thin] (E1) edge (E3);
  \path [draw=gray, thin] (E4) edge (E2);  
  \path [draw=gray, thin] (E4) edge (E3);

  \path [draw=gray, thin] (A1) edge (E2);
  \path [draw=gray, thin] (A3) edge (E4);

  \path [draw=gray, thin] (A1) edge (D3);
  \path [draw=gray, thin] (A2) edge (D4);

  \path [draw=gray, thin] (A2) edge (C1);
  \path [draw=gray, thin] (A4) edge (C3);

  \path [draw=gray, thin] (A4) edge (B2);
  \path [draw=gray, thin] (A3) edge (B1);
  
  \node[red node] (X1) at (0.6,0.7){};
  \node[red node] (X2) at (0.6,0.3){};
  \node[red node] (X1a) at (1.4,0.7){};
  \node[red node] (X2a) at (1.4,0.3){};  
  \node[red node] (Y) at (-0.5,1.1){};
  \node[red node] (Ya) at (-0.5,0.7){};
  \node[red node] (Z) at (0.5,-0.5){};
  \node[red node] (Za) at (-0.4,-0.5){};

  \path [draw=red, thin] (A4) edge (X1);
  \path [draw=red, thin] (X2) edge (X1);  
  \path [draw=red, thin] (A3) edge (X2);  
  \path [draw=red, thin] (A2) edge (Y);
  \path [draw=red, thin] (D4) edge (Y);
  \path [draw=red, thin] (A1) edge (Z);
  \path [draw=red, thin] (E2) edge (Z); 
  
  \path [draw=red, thin, densely dashed] (A4) edge (X1a);
  \path [draw=red, thin, densely dashed] (X2a) edge (X1a);  
  \path [draw=red, thin, densely dashed] (A3) edge (X2a);  
  \path [draw=red, thin, densely dashed] (A2) edge (Ya);
  \path [draw=red, thin, densely dashed] (D4) edge (Ya);
  \path [draw=red, thin, densely dashed] (A1) edge (Za);
  \path [draw=red, thin, densely dashed] (E2) edge (Za);

  \node (E) at (-0.25,-1.5)[label=center:{$\cdots$}]{};
  \node (E') at (0.5,-2.2)[label=center:{$\vdots$}]{};
  \node (E'') at (1.3,-1.5)[label=center:{$\cdots$}]{};

  \node (C) at (-0.25, 2.5)[label=center:{$\cdots$}]{};
  \node (C') at (0.5, 3.4)[label=center:{$\vdots$}]{};
  \node (C'') at (1.3, 2.5)[label=center:{$\cdots$}]{};

  \node (B) at (3.35, 0.5)[label=center:{$\cdots$}]{};
  \node (B') at (2.5,1.4)[label=center:{$\vdots$}]{};
  \node (B'') at (2.5,-0.2)[label=center:{$\vdots$}]{};
  
  \node (D) at (-2.25, 0.5)[label=center:{$\cdots$}]{};
  \node (D') at (-1.5,1.4)[label=center:{$\vdots$}]{};
  \node (D'') at (-1.5,-0.2)[label=center:{$\vdots$}]{};  

\end{tikzpicture}
        \caption{A $2$-connected graph with infinitely many ends and a simply connected embedding}
        \label{fig:2-connected_example_revisited}
\end{figure}

\begin{remark}
Notice that when $G$ has $1$ end we have $\core(T)=\emptyset$ or it is a single point. And indeed, any choice of combinatorial embeddings for the Tutte components results in a combinatorial embedding of $G$ with $ \acc(G)= 1$.
\end{remark}

\subsection{2-connected graphs with special vertices} \label{subsection:technical_difficulty}
When we later consider arbitrary planar graphs decomposed into 2-connected blocks, there will be a need to treat certain special vertices of $G$, informally speaking, as if they were ends of $G$. That is, as if they alone counted as infinitely many vertices, and by themselves gave rise to an accumulation point in one of the incident faces. The reason we did not include this technicality so far is that Lemma \ref{lemma:full_core} - which is the key ingredient of our construction - is simpler to present without it, and the proof of Lemma \ref{lemma:full_core} is heavy in notation as it is. Moreover, the need for these special vertices arises further down the line, and their presence does not change the proof, only complicates the exposition.

Nevertheless, to be precise, we now explain how the definition of $\core(G)$, $\acc(G)$, and the statement and proof of Lemma \ref{lemma:full_core} need to be modified in order to incorporate the special vertices. See the upcoming subsection \ref{subsection:example_spec_vertices} for an example where the special vertices are crucial.

Assume that a set of vertices $W \subseteq V(G)$ is specified to be {\it special}. 
Earlier $V_{\infty}$ stood for the set of Tutte components $\alpha$ where $|G_{\alpha}|=\infty$, now we redefine $V_{\infty}$ so that every $\alpha$ with $V(G_{\alpha}) \cap W \neq \emptyset$ is also included.
Recall that $\core(G)$ is defined as $\Gamma(\core(T))$, where $\core(T)$ is the convex hull of $V_{\infty} \cup \Ends(T)$ in the Tutte tree $T$. This definition remains the same, but with the updated $V_{\infty}$. Note that we automatically have $W \subseteq \core(G)$.

The definition of $\acc(G, \pi)$ changes as well. When considering a finite subgraph $H \subset G$, $a^{\pi}_H$ denoted the number of combinatorial faces of $H$ which surrounded infinitely many vertices of $G \setminus H$. Now we update that definition by counting combinatorial faces surrounding vertices from $W$ as well. (Note that only vertices from $W\setminus V(H)$ can be surrounded by combinatorial faces of $H$.) We set $\acc(G,\pi) = \sup a^{\pi}_{H_i}$ as before, with $H_1 \subset H_2 \subset\ldots \subset G$ an exhaustion by finite graphs as before. 

Earlier in Lemma \ref{lemma:full_core} we assumed that $G$ has a simply connected embedding, which means a unique accumulation point $x \in S^2$. Let $D$ denote the connected component of $x$ in $S^2 \setminus \iota(G)$. In the plane $D \setminus  \{x\}$ is the infinite topological face of the graph (which might be empty).
When taking the special vertices $W$ into consideration, we need a stronger assumption in Lemma \ref{lemma:full_core}, namely that there exists a simply connected embedding such that vertices in $W$ are embedded on the topological boundary of $D$. The implication remains that $G$ has a unique (up to inverting all permutations) combinatorial embedding with $\acc(G)=1$.

\begin{lemma} \label{lemma:full_core_special_vertices}
Let $G$ be an infinite, $2$-connected planar graph with special vertices $W \subseteq V(G)$. Assume that $\core(G) = G$, and that $G$ has a simply connected embedding into $S^2$ which maps $W$ onto $\partial D$. Then (up to inverting all permutations) $G$ has a unique combinatorial embedding with $\acc(G)=1$.
\end{lemma}
 
The proof of Lemma \ref{lemma:full_core_special_vertices} is essentially the same as the proof of Lemma \ref{lemma:full_core}. Earlier, contradictions arose because we could establish $\acc(G,\pi) \geq 2$ by finding cycles with either infinitely many vertices, or virtual edges representing infinitely many vertices both on the inside and the outside of the cycle. When the special vertices $W$ are present, finding such a vertex inside or outside the cycle also suffices to show $\acc(G,\pi) \geq 2$.  
  
\subsection{An example with special vertices} \label{subsection:example_spec_vertices}

Consider the graph $G$ in Figure \ref{fig:example_spec_vertices}. The vertex $v$ is a cut vertex, and let $G_a$ denote the infinite $2$-connected component of $G$. We aim to illustrate that $v$ needs to be treated as a special vertex in $G_a$ in order to construct a combinatorial embedding of $G$ with $\acc(G)=1$.

Let us first explore what happens if we do not treat $v$ as special. Let $C$ denote the 4-cycle in $G_a$ containing $v$, and $P$ the infinite path that $v$ separates from $G_a$. Notice, that $C$ is not in $\core(G_a)$, in fact $\core(G_a)$ is the infinite ladder from Figure \ref{fig:tutte_decomp_ladder}. Lemma \ref{lemma:full_core} says that $\core(G_a)$ has an essentially unique combinatorial embedding with $\acc(\core(G_a))=1$ (which corresponds to the way it is drawn in these pictures, the only accumulation point being at infinity). In the Tutte decomposition of $G_a$, the cycle $C$ is amalgamated to the cycle $C'$ in $\core(G_a)$ through a 3-link $L$. When we will later choose a random combinatorial embedding of $G_a$ as described in subsection \ref{subsection:random_infinite_block}, the combinatorial embeddings of $C$ and $L$ are going to be chosen uniformly randomly, because they are not in the core. The cycle $C$ has only 1, but $L$ has 2 choices. And this choice of $\pi^L$ determines weather $C$ will be embedded inside $C'$, or outside it. In case $C$ is mapped inside $C'$ we will get $\acc(G)=2$ no matter how we reattach the infinite path $P$. So we will not manage to build the right combinatorial embedding this way.

Now let us explore what happens if we treat $v$ as special in $G_a$. In this case $C$ is also included in $\core(G_a)$, so $G_a=\core(G_a)$. We see that the embedding places $v$ on the boundary of the infinite face, so Lemma \ref{lemma:full_core_special_vertices} applies. It provides us with the essentially unique way of combinatorially embedding $G_a$ such that we can then reattach $P$ while keeping $\acc(G)=1$.

\begin{figure}[h]
     \centering
         \begin{tikzpicture}[node distance=1cm,
main node/.style={circle,minimum width=1.9pt, fill, inner sep=0pt,outer sep = 0pt},
red node/.style={circle,minimum width=2.2pt, fill,red, inner sep=0pt,outer sep = 0pt},
blue node/.style={circle,minimum width=2.2pt, fill,blue, inner sep=0pt,outer sep = 0pt},
square node/.style={draw,regular polygon,regular polygon sides=4,fill,inner sep=0.9pt,outer sep=0.1pt},
triangle node/.style={draw,regular polygon,regular polygon sides=3,fill,inner sep=0.8pt,outer sep=0.1pt}]   

  \node (d) at (-3,0)[label=center:{$\cdots$}]{};
  \node (e) at (3,0)[label=center:{$\cdots$}]{};
  \node[main node] (a) at (-1.4,0.7){};
  \node[main node] (b) at (0,0.7){};
  \node[main node] (c) at (1.4,0.7){};
  \node[main node] (u) at (-1.4,-0.7){};
  \node[main node] (v) at (0,-0.7){};
  \node[main node] (w) at (1.4,-0.7){};
  \node[main node] (x) at (-1.4,2.1){};
  \node[main node] (y) at (0,2.1)[label=right:{$v$}]{}; 
  \node[main node] (y1) at (0.5,2.6){};
  \node[main node] (y2) at (1,2.6){};
  \node[main node] (y3) at (1.5,2.6){};
  \node (y4) at (2,2.6)[label=center:{$\dots$}]{};  
  \node (C) at (-0.7,1.4)[label=center:{$C$}]{};
  \node (C') at (-0.7,0)[label=center:{$C'$}]{};  
  \node (P) at (1.5,2.9)[label=center:{$P$}]{};

  \path [draw=gray, thin] (y) edge (y1);
  \path [draw=gray, thin] (y1) edge (y2);
  \path [draw=gray, thin] (y2) edge (y3);

  \path [draw=gray, thin] (a) edge (b);
  \path [draw=gray, thin] (b) edge (c);
  \path [draw=gray, thin] (a) edge (u);
  \path [draw=gray, thin] (b) edge (v);
  \path [draw=gray, thin] (c) edge (w);
  \path [draw=gray, thin] (u) edge (v);
  \path [draw=gray, thin] (v) edge (w);
  \path [draw=gray, thin] (a) edge (x);
  \path [draw=gray, thin] (b) edge (y);
  \path [draw=gray, thin] (x) edge (y);

  \path [draw=gray, thin] (-2.8,0.7) edge (a);
  \path [draw=gray, thin] (-2.8,-0.7) edge (u);
  \path [draw=gray, thin] (c) edge (2.8,0.7);
  \path [draw=gray, thin] (w) edge (2.8,-0.7);

		\end{tikzpicture}
		\caption{An example with special vertices}
		\label{fig:example_spec_vertices}

\end{figure}

\subsection{Towards combinatorial embeddings of arbitrary con\-nec\-ted planar graphs}

Having found (essentially unique) combinatorial embeddings with $1$ combinatorial accumulation point for $2$-connected graphs, we now turn our attention to general connected graphs.

Let $G$ be any locally finite, connected planar graph, with block-cut tree $\mathcal{T}$. We write $G_a$, $a \in A$ for the 2-connected components and $C$ for the cut vertices of $G$.

A combinatorial embedding $\pi^G$ induces a combinatorial embedding $\pi^{G_a}$ on all blocks. In order to be able to reconstruct $\pi^{G}$ we also have to record how the $\pi^{G_a}$ are to be glued at the cut vertices. 

For a block $G_a$ we denote by $\Cut(G_a)$ the set of vertices $C \cap V(G_a)$. Similarly, we write $\Cut_{\infty}(G_a)$ for those vertices in $\Cut(G_a)$ that separate $G_a$ from infinitely many points in $G$. 

\[\Cut_{\infty}(G_a)= \{v \in V(G_a) \mid |\mathcal{C}_{G \setminus E(G_a)}(v)| = \infty\}.\]

Our aim is to construct $\pi^G$ randomly such that $\acc(G, \pi^G)=1$. We will do this by first choosing the $\pi^{G_a}$ randomly for all blocks $G_a$, using Subsection \ref{subsection:general_2-connected} in the infinite case. Secondly, when making choices on how to glue the $\pi^{G_a}$ at the cut vertices, we will make sure that in the end $\acc(G, \pi^G)=1$ holds. 

\subsection{Finite blocks with special vertices} \label{subsection:finite_block}

Let $G_a$ be a block, and assume first that $G_a$ is finite. Let $v\in \Cut_{\infty}(G_a)$, and let $G_{b_1}, \ldots, G_{b_l}$ denote the other blocks containing $v$. 
For any combinatorial embedding of $G$ each $G_{b_i}$ is surrounded by one of the combinatorial faces of $G_a$ that are incident to $v$. If $\mathcal{C}_{G \setminus G_a} (G_{b_i})$ is infinite, this produces an accumulation point inside that combinatorial face.

So in order to achieve $\acc(G,\pi^{G})=1$ we need all vertices in $\Cut_{\infty}(G_a)$ to be adjacent to the same combinatorial face of $G_a$. Moreover, all neighboring blocks $G_b$ with $\mathcal{C}_{G \setminus G_a} (G_{b_i})$ infinite have to fall in this distinguished combinatorial face. 

\begin{remark}
Note that fixing $\pi^{G_a}$ for each $a \in A$, and also fixing the choice of the combinatorial face of each $(G_a , \pi^{G_a})$ where each neighboring $G_b$ should be embedded still does not determine the global combinatorial embedding $\pi^G$. When there are several blocks joined by a single cut-vertex we still need to choose a cyclic order of the blocks. This choice however does not impact $\acc(G, \pi^G)=1$.
\end{remark}

We choose a random combinatorial embedding $\pi^{G_a}$ such that $\pi^{G_a}$ places all of $\Cut_{\infty}(G_a)$ on the same distinguished face as follows. The simply connected embedding of $G$ (that we assumed it has) induces such a combinatorial embedding on $G_a$. We wire $\Cut_{\infty}(G_a)$ together. That is, add an auxiliary vertex $v_a$ to $V(G_a)$ and connect $v_a$ with all $v \in \Cut_{\infty}(G_a)$. This graph $G_a^w$ is still planar, and as it is finite it has finitely many combinatorial embeddings. We choose a (uniform) random combinatorial embedding of $G_a^w$, and choose the $G_a$-face that surrounds $v_a$ to be distinguished. 

\subsection{Infinite blocks with special vertices} \label{subsection:infinite_blocks}
For an infinite block $G_a$ the role of the distinguished combinatorial face will be played by the ``infinite face'' of $G_a$. We will not define the infinite face, but rather say that a point ``sees infinity''. We will then make sure that $\acc(G_a,\pi^{G_a})=1$ and all of $\Cut_{\infty}(G_a)$ can see infinity. 

Since $G_a$ is infinite it is no longer true that any dart $\righte$ is part of a unique finite combinatorial face. Let $W(\righte)$ denote the unique bi-infinite walk  containing $\righte$ with the property that  at every vertex the next step uses the dart succeeding the opposite of the previous. When $G_a$ was finite, $W(\righte)$ kept looping the finite combinatorial face containing $\righte$ infinitely many times in both directions. When $G_a$ is infinite however, $W(\righte)$ might be a bi-infinite path of darts. We say a vertex $x \in V(G_a)$ \emph{sees infinity} in the combinatorial embedding $\pi^{G_a}$ if there is a dart $\righte$ with $t(\righte)=x$ such that $W(\righte)$ is bi-infinite. Let $\rightf$ denote the dart after $\righte$ in $W(\righte)$, i.e.\ $\rightf=\pi^{G_a}_x(\lefte)$. (Recall that $\pi_x^{G_a}$ denotes the cyclic permutation on the outgoing darts at the vertex $x$ in the combinatorial embedding $\pi^{G_a}$.)

We call the pair $(\lefte, \rightf)$ the \emph{infinite region} at $x$. We claim that this pair, if it exists, is unique for $x$. Indeed, as $G_a$ is 2-connected, we can find a directed path $\overrightarrow{P}$ from $t(\rightf)$ to $s(\righte)$, and form the finite cycle $\overrightarrow{C}=(\overrightarrow{P},\righte, \rightf)$. The walk $W(\righte)$ has infinitely many points inside $\overrightarrow{C}$. If there was another pair $\lefte_1, \rightf_1$ giving an infinite region at $x$, then $W(\righte_1)$ would have infinitely many points outside $\overrightarrow{C}$ contradicting $\acc(G_a,\pi^{G_a})=1$.

\begin{lemma} \label{lemma:infinite_face}
Let $G_a$ be an infinite block with a fixed combinatorial embedding $\pi^{G_a}$ such that $\acc(G_a, \pi^{G_a}) = 1 $. A vertex $x \in V(G_a)$ sees infinity if and only if there is no cycle separating it from infinitely many vertices of $G_a$.
\end{lemma}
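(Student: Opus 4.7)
My plan is to prove both directions by passing to a topological realization $\iota: G_a \to S^2$ of $\pi^{G_a}$ with exactly one accumulation point $p$, provided by the remark following Definition \ref{def:comb_acc} since $\acc(G_a, \pi^{G_a}) = 1$. Let $F_\infty$ be the unique topological face of $\iota(G_a)$ whose closure in $S^2$ contains $p$. The key equivalence bridging the combinatorial and topological pictures is that $x$ is along the infinite face (combinatorially) if and only if $\iota(x) \in \partial F_\infty$: the darts at $x$ partition into corners $(\lefte, \pi_x(\lefte))$, each lying in exactly one topological face incident to $\iota(x)$, and such a corner sits in $F_\infty$ precisely when its associated successor-walk does not close up as a finite combinatorial face, i.e.\ exactly when the walk $W$ it generates is bi-infinite.

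For the forward direction, assume $\iota(x) \in \partial F_\infty$ and suppose toward contradiction that a cycle $C'$ of $G_a$ with $x \notin V(C')$ has $x$ on one side and infinitely many vertices on the other. The image $\iota(C')$ is a Jordan curve splitting $S^2$ into two open regions. Since $p \notin \iota(G_a)$ and every neighborhood of $p$ meets infinitely many embedded vertices, $p$ lies in the region containing those infinitely many vertices, opposite to $\iota(x)$. Because $F_\infty$ is an open connected set disjoint from $\iota(C')$ and containing $p$, it lies entirely in that opposite region, so $\iota(x) \notin \overline{F_\infty}$, contradicting the assumption.

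For the backward direction I would argue by contrapositive. Suppose $\iota(x) \notin \partial F_\infty$; then every topological face incident to $\iota(x)$ is bounded. Let $F_1, \ldots, F_k$ be these finitely many faces: their closures cover a neighborhood of $\iota(x)$ in $S^2$, so the compact set $K = \overline{F_1} \cup \cdots \cup \overline{F_k}$ contains $\iota(x)$ in its interior while avoiding $p$. I would then extract from $\partial K$ a cycle $C'$ of $G_a$ whose image is a Jordan curve separating $\iota(x)$ from $p$, which combinatorially means $C'$ separates $x$ from the infinitely many vertices accumulating at $p$.

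The main obstacle I anticipate is the extraction of a simple separating cycle $C'$ in the backward direction: a priori the boundary $\partial K$ is a closed walk in $G_a$ that could revisit vertices and form a bouquet of cycles rather than a single simple one. I would address this by starting from an outer edge of some $F_i$ and iterating $\pi$-successors as one does when reading off combinatorial faces; the 2-connectedness of $G_a$ should then force the resulting closed walk to trace a genuine Jordan curve enclosing $\iota(x)$ but not $p$, yielding the desired separating cycle.
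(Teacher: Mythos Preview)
Your approach via a topological realization is sound in the forward direction but differs from the paper's purely combinatorial argument. For the forward implication the paper never leaves the combinatorial world: the bi-infinite walk $W(\righte)$ through $x$ cannot cross the separating cycle $C$, so $C$ has infinitely many vertices on \emph{both} sides, contradicting $\acc(G_a,\pi^{G_a})=1$ directly. Your Jordan-curve argument reaches the same conclusion, but only after you have established the (correct, though not entirely trivial) bridge between the combinatorial notion ``along the infinite face'' and the topological condition $\iota(x)\in\partial F_\infty$.

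The substantive issue is your backward direction. You correctly flag the obstacle of extracting a cycle from $\partial K$, but the proposed fix is too loose to count as a proof. Iterating $\pi^{G_a}$-successors from an ``outer'' dart merely traces a combinatorial face of $G_a$ itself, which will either be one of your $F_i$ or a bi-infinite walk along $F_\infty$ --- neither is the finite separating cycle you want. And invoking $2$-connectedness of $G_a$ does not by itself force any walk you produce to be a Jordan curve; what matters is the $2$-connectedness of the \emph{subgraph} you iterate in, which you never isolate. The paper avoids the extraction problem entirely by passing to the finite subgraph $H$ consisting of the edges and vertices of the finite combinatorial faces at $x$, with the restricted embedding $\pi^H$. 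The combinatorial faces of $H$ that are adjacent to $x$ coincide with those of $G_a$ and thus surround no vertices of $G_a\setminus H$; hence some \emph{other} combinatorial face of $H$ must surround infinitely many vertices of $G_a$, and that face is the desired separating cycle. This is precisely what your ``iterate successors'' idea becomes once one iterates in $H$ rather than in $G_a$, but the paper's formulation makes the conclusion immediate and removes the need for any topological surgery on $\partial K$.
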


\begin{proof}
First assume towards contradiction that $x$ sees infinity and there is a cycle $\overrightarrow{C}$ surrounding $x$ with infinitely many points on the outside. The bi-infinite path $W(\righte)$ cannot cross $\overrightarrow{C}$, so there are infinitely many vertices inside $\overrightarrow{C}$ as well, contradicting $\acc(G_a,\pi^{G_a})=1$.

On the other hand assume that $x$ does not see infinity, that is all darts leaving $x$ are part of finite combinatorial faces. Form the finite subgraph $H$ consisting of the vertices and edges of these combinatorial faces, together with the combinatorial embedding $\pi^H$ restricted from $\pi^{G_a}$. This is a finite graph, and none of its combinatorial faces adjacent to $x$ surround any vertices of $G_a$ (when considered as oriented cycles in $G_a$). However, all vertices in $V(G_a) \setminus V(H)$ are surrounded by some combinatorial face of $H$. As $G_a$ is infinite, there is some other combinatorial face of $H$, not adjacent to $x$ that surrounds infinitely many points of $G_a$. This, as a cycle in $G_a$, separates $x$ from infinitely many vertices of $G_a$. 
\end{proof}

\subsection{Random combinatorial embeddings of infinite blocks} \label{subsection:random_infinite_block}

We now construct a random combinatorial embedding of an infinite block $G_a$ using Lemma \ref{lemma:full_core_special_vertices}, by setting $W = \Cut_{\infty}(G_a)$.  

If $G$ has at least 2 ends, then any infinite $G_a$ has at least 2 ends (where vertices from $\Cut_{\infty}(G_a)$ are also regarded as ``ends''), so $\core(G_a)$ is not empty. Also, if $G$ has a simply connected embedding $\iota$, then the restriction $\iota|_{G_a}$ is a simply connected embedding of $G_a$ placing $\Cut_{\infty}(G_a)$ on the topological boundary of the infinite topological face of $G_a$. Thus the block $G_a$ satisfies the assumptions of Lemma \ref{lemma:full_core_special_vertices}.

In subsection \ref{subsection:technical_difficulty} we emphasized that special vertices always belong to the core, so $\Cut_{\infty}(G_a) \subseteq V(\core(G_a))$. We claim that with respect to either of the two combinatorial embeddings $\pi^{\core(G_a)}_+$ and $\pi^{\core(G_a)}_-$, all vertices $x \in \Cut_{\infty}(G_a)$ have to see infinity in $\core(G_a)$ (where $\pi^{\core(G_a)}_+$ and $\pi^{\core(G_a)}_-$ stand for the unique combinatorial embedding and its inverse, as in Lemma \ref{lemma:full_core}). Indeed, by Lemma \ref{lemma:infinite_face}, if $x$ did not see infinity, it would be separated from infinitely many points by a cycle, witnessing $ \acc(\core(G)) \geq 2$, using that $x$ is special.

\begin{remark}
If some $G_\alpha$ is 2-connected and has no vertices that see infinity then we see that it 
cannot contain $\Cut_{\infty}$-vertices at all, since that would give rise to at least 2 accumulation points of the whole graph $G$.
\end{remark}

Note that amalgamating the remaining Tutte components $\alpha \notin \core(T_a)$ to $\core(G_a)$ cannot create new cycles separating vertices of $\Cut_{\infty}(G_a)$ from infinitely many points. So the choices of the $\pi^{\alpha}$ for $\alpha \notin \core(T_a)$ are arbitrary, as before in subsection \ref{subsection:general_2-connected}. These choices do not influence $\Cut_{\infty}(G_a)$ seeing infinity. 

We make independent uniform random choices between $\pi^{\core(G_a)}_+$ and $\pi^{\core(G_a)}_-$ for the core, and also to choose each $\pi^{\alpha}$  for $\alpha \notin \core(T_a)$. The chosen $\pi^{\core(G_a)}$ and $\pi^{\alpha}$ together form the random combinatorial embedding $\pi^{G_a}$.

\subsection{Random embedding of $G$ from its embedded blocks}

\label{subsection:general_connected}

Recall that at this point $G$ is a deterministic, infinite, locally finite graph that has a locally finite embedding. We now construct a random combinatorial embedding $\pi^G$ of $G$ with $\acc(G,\pi^G) = 1$. 

For finite blocks $G_a$, as described in subsection \ref{subsection:finite_block}, we choose a uniform random combinatorial embedding $\pi^{G_a}$ with the additional property that the vertices in $\Cut_{\infty}(G_a)$ are incident to a combinatorial face that we consider distinguished.

For infinite blocks $G_a$, we chose $\pi^{G_a}$ as in subsection \ref{subsection:random_infinite_block}, with $\Cut_{\infty}(G_a)$ considered as special points representing infinitely many vertices (as explained in subsection \ref{subsection:technical_difficulty}, and therefore being placed so that they see infinity). 

We then construct the combinatorial embedding $\pi^G$ by putting together the $\pi^{G_a}$. At cut vertices $v \in \Cut_{\infty}(G_a)$ we define $\pi^{G}_v$ so that all edges leading to all other blocks $G_b$ intersecting $G_a$ at $v$ are placed inside the distinguished face of $(G_a, \pi^{G_a})$. In the infinite case this means that in the cyclic order we place these edges between the two darts $(\lefte, \rightf)$ that form the unique infinite region of $G_a$ at $v$. 

There is some freedom still in the choice of $\pi_v$ when more than two blocks are glued at $v$, see the Remark in subsection \ref{subsection:finite_block}. In that case we choose uniformly randomly among the finitely many choices satisfying the above.

We claim that $\acc(G,\pi^G) = 1$. Indeed, any cycle $C$ has to belong to a block $G_a$. The choice of the distinguished face (or the infinite region in case $G_a$ is infinite) makes sure that $C$ cannot have infinitely many points on both sides.

\begin{remark}
Note that so far in the present Section \ref{section:unimod_combin} the graph $G$ was deterministic and unrooted (but assumed to have a locally finite embedding).
\end{remark}

\subsection{Proof of Theorem \ref{theorem:unimodular_combinatorial_embedding_1_acc}}

The following proposition completes the proof of Theorem \ref{theorem:unimodular_combinatorial_embedding_1_acc}.

\begin{proposition}
Let $(G,o)$ be a URPG. Given a random instance of $(G,o)$, choose the combinatorial embedding $\pi$ of $G$ randomly as described in subsection \ref{subsection:general_connected}. Then the random triple $(G,o;\pi)$ is a unimodular combinatorial embedding of $(G,o)$.
\end{proposition}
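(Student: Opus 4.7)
The plan is to realize $\pi$ as a graph-isomorphism-equivariant factor of $(G,o)$ equipped with an IID vertex decoration, from which unimodularity of $(G,o;\pi)$ follows by the standard principle that IID decorations preserve unimodularity and that equivariant factors of unimodular random rooted graphs are unimodular (see e.g.\ \cite{aldous2007processes, lovasz2012large}).

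The first step is to observe that every combinatorial object appearing in the construction of subsection \ref{subsection:general_connected} is a deterministic function of the isomorphism class of $G$: the block-cut tree and the blocks $G_a$, the sets $\Cut(G_a)$ and $\Cut_\infty(G_a)$, the wired graphs $G_a^w$ for finite blocks, and, for each infinite block, the Tutte tree with its core $\core(G_a)$ and its non-core Tutte components $G_\alpha$. Hence all of these are canonical substructures, equivariant under graph isomorphisms. I would then attach to each vertex $v \in V(G)$ an independent sequence $X_v = (X_v^{(1)}, X_v^{(2)}, \ldots)$ of IID $U[0,1]$ variables, producing a unimodular random rooted decorated graph $(G,o;\omega)$.

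Next, I would use $\omega$ to realize the three families of random choices prescribed in the construction: the uniform combinatorial embedding of $G_a^w$ for each finite block, the $\pm$ choice for each core $\core(G_a)$ together with uniform choices of $\pi^\alpha$ for each non-core Tutte component $G_\alpha$ in each infinite block, and the uniform cyclic order at each cut vertex of block-degree at least two. Each choice lives in a finite probability space attached to a canonical substructure $S$ of $G$, and can be made as a measurable function of $\omega\restriction S$ in a graph-isomorphism-equivariant way, for instance by using the labels themselves to produce a canonical (label-broken) enumeration of $V(S)$ and reading off a uniform choice from a prescribed coordinate of the labels. Allocating disjoint coordinates $X_v^{(i)}$ to distinct tasks guarantees that choices across distinct substructures are mutually independent and that the joint distribution coincides with the one implicitly used in subsection \ref{subsection:general_connected}.

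The main obstacle, which is essentially bookkeeping rather than a conceptual difficulty, is to verify simultaneously that this extraction procedure is equivariant (so that the factor property is not spoiled by automorphisms of finite substructures) and that it reproduces exactly the uniform, independent choices specified by the construction. Once this is in place, $\pi$ is a deterministic equivariant function of $(G,o;\omega)$, so $(G,o;\pi)$ is an equivariant factor of the unimodular random rooted graph $(G,o;\omega)$ and is therefore itself unimodular; as the map $\pi \mapsto (G,o)$ by forgetting the decoration gives back $(G,o)$ in distribution, this is a unimodular combinatorial embedding of $(G,o)$, as claimed.
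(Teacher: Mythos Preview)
Your approach is correct and is genuinely different from the paper's. The paper argues directly via involution invariance: since the random choice of $\pi$ depends only on the unrooted isomorphism class of $G$ (never on the root), decorating by $\pi$ commutes with the operation of degree-biasing and taking a random step; hence the birooted decorated law is symmetric in the two roots. This is a two-line argument once one observes that the construction is root-blind.

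Your route instead realizes $\pi$ explicitly as an equivariant factor of $(G,o)$ with IID vertex labels, and then invokes the general principle that such factors preserve unimodularity. This is more laborious---you must track how each of the random choices (finite-block embeddings, the $\pm$ choice on each core, non-core Tutte components, cyclic orders at cut vertices) is extracted from the labels in an equivariant and independent way---but it yields a strictly stronger conclusion: $\pi$ is a factor of IID, not merely unimodular. The paper's argument is shorter because it only needs unimodularity; your argument would be preferable if one later wanted to couple the embedding with other factor-of-IID constructions. Both are standard techniques; the paper simply chose the lighter one.
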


\begin{proof}
Let $(G,o_1, o_2; \pi)$ denote the birooted (decorated) random graph  induced from $(G, o; \pi)$ by taking a random step in the degree-biased version of $(G, o; \pi)$. We have to check that the distribution of $(G, o_1, o_2; \pi)$ is invariant with respect to swapping the $o_1$ and $o_2$. 

First the unimodularity of $(G,o)$ says that $(G, o_1, o_2)$ is invariant under swapping $o_1$ and $o_2$. 
We have two operations on the random graph $(G,o)$: one is taking a random step on a degree-biased sample of $(G,o)$, the second is decorating $(G,o)$ randomly. These two operations commute, their order is irrelevant. The bias and the random step never considers the combinatorial embedding and the random combinatorial embedding never considers any roots.
\end{proof}

\section{Embeddings of URPG's into $\H^2$ and $\R^2$}\label{section:inv_uni_emb}

In this section we present the proof of Theorem \ref{thm:invariant_and_unimodular_embedding}. Much of it is adopted from \cite{benjamini2019invariant}, so for brevity we will refer to them as much as possible, and emphasize the contribution of the present work instead.

We tackle the case of more than one ends through Theorem \ref{theorem:unimodular_combinatorial_embedding_1_acc} and Proposition \ref{prop:triangularization}. Besides these, the case not covered by \cite{benjamini2019invariant}, namely an invariant embedding of a non-amenable URPG into $\H^2$ will be dealt with using Theorem \ref{theorem:invariant_from_unimodular}.

\smallskip

\begin{proof}[Proof of Proposition \ref{prop:triangularization}]
By Theorem \ref{theorem:unimodular_combinatorial_embedding_1_acc} we can start with a unimodular random combinatorial embedding $\pi$ of $G$ with $\acc(G, \pi) \leq 1$. Based on this we can construct the triangulation $(G^+,o^+)$ as in \cite[Theorem 2.2]{benjamini2019invariant}, the resulting $G^+$ has finite expected degree, and invariant amenability or non-amenability is preserved, and positive density of $G$ in $G^+$ is guaranteed. Moreover the construction provides a unimodular combinatorial embedding $\pi^+$ with $\acc(G^+, \pi^+)=1$, where the restriction of $\pi^+$ to $G\leq G^+$ is $\pi$.

We claim $G^+$ has one end. If  $G^+$ had more than one end, there would be a finite induced subgraph $F$ with $G^+ \setminus F$ having at least two infinite components. These infinite components would also have to be surrounded by distinct combinatorial faces of $F$, because $G^+$ is triangulated (no edge can be added without violating planarity). In fact all vertices of $G^+ \setminus F$ that are surrounded by the same combinatorial face of $F$ belong to the same connected component of $G^+\setminus F$. Consequently $F$ would have two distinct faces surrounding infinitely many points, implying $\acc(G^+, \pi^+) \geq 2$.
\end{proof}

\smallskip

\begin{proof}[Proof of Theorem \ref{thm:invariant_and_unimodular_embedding}]
For amenable URPG's one in fact first finds an invariant embedding of finite intensity, and takes the Palm version to produce an unimodular embedding. By Proposition \ref{prop:triangularization} one can find $(G,o)$ as a positive density subgraph of an amenable planar triangulation. By \cite[Theorem 4.2]{benjamini2019invariant} such a triangulation can be embedded into $\R^2$ invariantly with vertices mapped onto some invariant point process of finite intensity.

For non-amenable URPG's the construction works in the other direction. We construct a unimodular embedding of positive intensity, and use Theorem \ref{theorem:invariant_from_unimodular} to deduce the existence of an invariant embedding. The triangulation $(G^+,o^+)$ has a circle packing represetation on $\H^2$ which is unique up to isometries \cite{he1993fixed, angel2016unimodular}. Therefore the embedding defined by the circle packing (by connecting centers of tangent circles by straight line segments) is a well defined rooted drawing of $(G^+,o^+)$ (as defined in Subsection \ref{subsec:unimodular_embeddings}). As $(G^+,o^+, S)$ is unimodular, restricting the rooted drawing to $(S,o^+)$ on the event that $o^+ \in S$,  we get a unimodular embedding of $(G,o)$. Moreover, by a Mass Transport argument the intensity is simply multiplied by $\P[o^+ \in S]>0$ when restricting the embedding. The payment function is $f([\mathfrak{G}^+,\mathfrak{o}^+,\mathfrak{u},\mathfrak{S}]) = \mathbbm{1}_{\mathfrak{u} \in \mathfrak{S}} \cdot \lambda\big( \mathrm{Vor}(\mathfrak{G}^+,\mathfrak{o}^+) \cap \mathrm{Vor}(\mathfrak{S},\mathfrak{u})\big)$. (The reader who uses gothic letters as rarely as the authors may want to note that $\mathfrak{S}$ is the gothic version of the letter $S$.)

We now prove that this unimodular embedding has positive intensity. All edges of $(G^+,o^+)$ are mapped to geodesics, and the embedding gives a triangulation of $\H^2$. We consider the following factor allocation: take the barycentric subdivision of each triangle into 6 smaller triangles, and assign each to the one original vertex it is incident to. The number of triangles assigned to $o^+$ is $2 \deg_{G^+}(o^+)$, so the expected area of these pieces is at most $(\pi/2)\cdot 2\E[\deg_{G^+}(o^+)] < \infty$. Consequently the intensity of the embedding of $(G^+,o^+)$ is positive. As $S$ has positive density in $G^+$, the embedding restricted to $S$ also has positive intensity. Therefore we get an invariant embedding by Theorem \ref{theorem:invariant_from_unimodular}.


Finally, URPG's have no invariant or unimodular embeddings into the wrong space as proven in Propositions \ref{prop:no_invar_nonamen}, \ref{prop:no_invar_amen} and \ref{prop:pathologies} below.
\end{proof}

\section{Cases with no embedding because of isoperimetry}\label{section:noembedding}

The final pieces of the proof of Theorem \ref{thm:invariant_and_unimodular_embedding} are the following propositions.  

\begin{proposition}\label{prop:no_invar_nonamen}
A non-amenable URPG $G$ has no invariant embedding into $\R^2$. 
\end{proposition}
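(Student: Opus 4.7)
The plan is to derive a contradiction from the amenability of the ambient space $\R^2$: I will show that any invariant embedding of $(G,o)$ into $\R^2$ provides a Følner-type witness of invariant amenability of $(G,o)$.

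Suppose $(G,o)$ has an invariant embedding $\mathfrak{G}$ into $\R^2$ of finite intensity and let $(\mathfrak{G}^*,\mathfrak{o})$ be its Palm version. For each $r>0$, independently sample an $\Isom(\R^2)$-invariant tessellation of $\R^2$ whose cells have linear size of order $r$ (e.g.\ a Poisson--Voronoi diagram of cell intensity $1/r^2$, or a uniformly shifted and rotated axis-aligned square grid of side $r$). Let $U_r\subseteq V(\mathfrak{G}^*)$ be the set of vertices $v$ such that at least one edge of $\mathfrak{G}^*$ incident to $v$ has its embedded image exit the tessellation cell containing $v$. Independence and invariance of the tessellation imply that $(G,o;U_r)$ is a unimodular decoration of $(G,o)$. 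If $v\notin U_r$, every $G$-edge at $v$ stays in a single cell, so every $G$-neighbor of $v$ not in $U_r$ lies in the same cell; hence each connected component of $G\setminus U_r$ is contained in one cell of the tessellation. Each cell has finite area and the embedding has finite intensity, so each cell contains a.s.\ only finitely many vertices, and all components of $G\setminus U_r$ are a.s.\ finite.

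To obtain the desired contradiction, it suffices to show $\P[\mathfrak{o}\in U_r]\to 0$ as $r\to\infty$: for any $\varepsilon>0$ this produces a unimodular set $U_r$ with $\P[\mathfrak{o}\in U_r]<\varepsilon$ and a.s.\ finite complementary components, contradicting non-amenability of $(G,o)$. This estimate is the main obstacle. The key facts I would use are: embedded edges are compact arcs in $\R^2$, hence each has a.s.\ finite diameter $D_e$; and the root has a.s.\ finite degree, since the expected degree is finite. A single edge of diameter $D$ incident to $\mathfrak{o}$ lies inside a ball of radius $D$ around $\iota(\mathfrak{o})$, and the probability that such a ball is not contained in a single cell of the tessellation is at most $CD/r$ for a universal constant $C$. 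Conditioning on $(\mathfrak{G}^*,\mathfrak{o})$ and summing over the finitely many edges at $\mathfrak{o}$ yields
\[
\P\bigl[\mathfrak{o}\in U_r \mid (\mathfrak{G}^*,\mathfrak{o})\bigr] \;\le\; \min\Bigl(1,\; \tfrac{C}{r}\sum_{e\ni \mathfrak{o}} D_e\Bigr).
\]
Since the degree of $\mathfrak{o}$ is a.s.\ finite and each $D_e$ is a.s.\ finite, the inner sum is a.s.\ finite, and the dominated convergence theorem (with the trivial dominating constant $1$) gives $\P[\mathfrak{o}\in U_r]\to 0$. The subtle point is that we have no a priori integrability bound on $\sum_{e\ni \mathfrak{o}} D_e$ — the embedding of an individual edge may be arbitrarily spread out in the plane — and the uniform bound $1$ inside the expectation is what makes the argument go through.
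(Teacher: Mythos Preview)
Your proof is correct and takes essentially the same approach as the paper: both use an invariant tessellation of $\R^2$ by large squares to certify invariant amenability of $(G,o)$. The paper's one-line argument invokes nested $2^n\times 2^n$ squares and a ``unimodular finite exhaustion'', while you work at a single scale $r\to\infty$ and supply the estimate $\P[\mathfrak{o}\in U_r]\to 0$ (via dominated convergence, using a.s.\ finiteness of edge diameters and of the root degree) that the paper leaves implicit.
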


\begin{proof}[Proof as in Theorem 1.1 in \cite{benjamini2019invariant}]
Suppose that $G$ had an isometry-invariant embedding into $\R^2$ without accumulation points. Then one could use the invariant random partitions of $\R^2$ to $2^n \times 2^n$ squares to define a unimodular finite exhaustion of $G$. Thus $G$ is amenable, a contradiction.
\end{proof}

\begin{proposition}\label{prop:no_invar_amen}
An amenable URPG $G$ has no invariant embedding into $\H^2$.
\end{proposition}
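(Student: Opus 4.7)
The plan is to pass to a planar triangulation and derive a numerical contradiction via Gauss-Bonnet in $\H^2$. Assume for contradiction that the invariantly amenable URPG $G$ has an invariant embedding $\mathfrak{G}$ into $\H^2$ of (necessarily finite) intensity $p$, and pass to the Palm version $(\mathfrak{G}^*,\mathfrak{o})$. By Proposition \ref{prop:triangularization} combined with the given embedding, I would produce an invariantly amenable unimodular planar triangulation $G^+$ of finite expected degree $\bar d$, together with an invariant embedding into $\H^2$ of intensity $p^+$. Concretely, one equivariantly triangulates each face of the embedding of $G$ (by adding diagonals, or extra vertices on unbounded faces first), producing an invariant triangulated drawing.

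The first key ingredient is the combinatorial fact that a unimodular invariantly amenable planar triangulation of finite expected degree has $\bar d = 6$. This is a standard F{\o}lner calculation: on a witness $U$ with all components of $G^+\setminus U$ finite one has $V-E+F \approx 0$ and $2E=3F$, giving $\bar d=2E/V\to 6$. The second ingredient is hyperbolic Gauss-Bonnet: for each triangular face $T$,
\[
\lambda(T)\ =\ \pi-(\alpha_1+\alpha_2+\alpha_3)+\int_{\partial T}\kappa_g\,ds ,
\]
where $\alpha_i$ are the interior angles and $\kappa_g$ is the geodesic curvature of the (not necessarily geodesic) boundary arcs. Summing over all triangles and dividing per unit area of $\H^2$: the $\kappa_g$-integrals cancel in pairs across shared edges (opposite orientations), the total face area per unit area equals $1$, the number of faces per unit area equals $p^+\bar d/3$ (from $2|E|=3|F|$ and $|E|=|V|\bar d/2$), and the angles around each vertex sum to $2\pi$ so the total angle mass per unit area equals $2\pi p^+$. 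The resulting identity is
\[
1\ =\ \pi\cdot\frac{p^+\bar d}{3}\;-\;2\pi p^+ ,
\]
which rearranges to $\bar d-6=3/(\pi p^+)>0$, contradicting $\bar d=6$.

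The main obstacle is carrying out the triangulation step equivariantly when the given embedding of $G$ has unbounded faces (a face of $G$ in $\H^2$ may extend to the boundary at infinity). This can be handled by first passing to the combinatorial triangulation provided by Proposition \ref{prop:triangularization} (which has only one combinatorial accumulation point) and realizing the added combinatorial edges as arcs inside the given faces via a canonical equivariant choice. A secondary technicality is rigorously justifying the ``per unit area'' accounting in the hyperbolic setting; this is done by re-expressing each quantity as an expectation via mass transport and Proposition \ref{prop:intensity} applied to the barycentric allocation (each triangle split into six sub-triangles, two assigned to each vertex), which yields exactly the identity above as an equality of Palm expectations.
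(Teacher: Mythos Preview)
Your Gauss--Bonnet strategy is genuinely different from the paper's, and the numerical endgame is correct: an invariantly amenable unimodular planar triangulation has $\bar d=6$, while the angle-defect identity forces $\bar d-6=3/(\pi p^+)>0$ for a triangulation invariantly embedded in $\H^2$ with finite intensity whose faces tile $\H^2$. The paper takes a much shorter and more self-contained route. By the continuous Mass Transport Principle the Voronoi cell of the root has finite expected area; then a unimodular finite exhaustion of $G$ (available by amenability) yields an invariant nested sequence of coarser and coarser measurable partitions of $\H^2$, obtained by merging the Voronoi cells within each finite piece of the exhaustion. Any two points of $\H^2$ eventually lie in the same part, contradicting the non-amenability of $\H^2$. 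No triangulation, no curvature, and no dependence on the heavy machinery of Proposition~\ref{prop:triangularization} or Theorem~\ref{theorem:unimodular_combinatorial_embedding_1_acc}.

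The substantive gap in your argument is precisely the step you flag as the main obstacle. Proposition~\ref{prop:triangularization} supplies only a \emph{combinatorial} triangulation $G^+$; you still must realize the extra vertices and edges of $G^+$ as points and arcs inside the (possibly unbounded) topological faces of the given embedding of $G$, equivariantly, with finite resulting intensity, and in such a way that the embedded triangular faces actually cover $\H^2$. The last requirement is essential: without it the barycentric scheme is not an allocation in the sense of Proposition~\ref{prop:intensity} (it does not partition $\H^2$ up to measure zero), and the identity ``total face area per unit area equals $1$'' fails. An unbounded face of $G$ must absorb infinitely many new $G^+$-vertices, and ``a canonical equivariant choice'' does not by itself produce such a placement with the needed properties. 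You also implicitly need the embedded edges regular enough for the edgewise $\kappa_g$-cancellation in Gauss--Bonnet, or else an equivariant straightening to geodesics that introduces no crossings. None of these steps is routine, whereas the paper's coarsening argument avoids all of them.
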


\begin{proof}
Suppose that $G$ has an invariant embedding into $\H^2$. Take the Voronoi partition of $\H^2$ corresponding to the embedded vertices. 
For the random configuration $\omega$ and $x,y\in\H^2$, define $f(x,y;\omega)$ to be 1 if $x$ is in the Voronoi cell of an embedded vertex that is at distance $\leq 1$ from $y$, and for $A,B\subset\H^2$ Borel define 
\[\mu (A,B)=\E \left[\int_{x\in A} \int_{y\in B} f(x,y;\omega) dx dy\right].\]
The continuous version of the Mass Transport Principle (Theorem 5.2 in \cite{benjamini2001percolation}) shows that the expected area of the Voronoi cell of the origin has to be finite. 

Consider some unimodular finite exhaustion of $G$, i.e.\ some unimodular random $\big(G,o;(P_n)_{n\in \mathbb{N}}\big)$ with the following properties almost surely. Each $P_n$ is a partition of $V(G)$ into finite subsets, $P_{n+1}$ is a coarsening of $P_n$, and for every $u,v \in V(G)$ there exists some $n$ for which $u$ and $v$ are in the same part in $P_n$. The existence of such a unimodular finite exhaustion is equivalent to the amenability of $(G,o)$.

The $P_n$ can be used to define an invariant random sequence of coarser and coarser partitions $K_n$ of $\H^2$, by taking the unions of the Voronoi cells of vertices that belong to the same part in $P_n$. The parts in $K_n$ have finite area. Any two points of $\H^2$ end up in the same part of $K_n$ for high enough $n$, which contradicts the non-amenability of $\H^2$. (To see this using our setup, take an invariant random non-amenable tiling in $\H^2$ with a transitive underlying graph $\Gamma$. The above sequence of partitions would generate a unimodular finite exhaustion of $\Gamma$, a contradiction.)
\end{proof}

\smallskip

Propositions \ref{prop:no_invar_nonamen} and \ref{prop:no_invar_amen} covered invariant embeddings; the next observations are their counterparts addressing unimodular embeddings.

\begin{proposition} \label{prop:pathologies}
Let $(G,o)$ be a URPG. 
\begin{enumerate}[(1)]
\item \label{pelda1} If $(G,o)$ is amenable, then it has no unimodular embedding of positive intensity into $\H^2$. 

\item \label{pelda2} There exists an amenable $(G,o)$ with a unimodular embedding (of 0 intensity) into $\H^2$.

\item \label{pelda3} If $(G,o)$ is non-amenable then it has no unimodular embedding of positive intensity into $\R^2$.
\end{enumerate}
\end{proposition}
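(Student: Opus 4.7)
The plan is to derive (1) and (3) from the correspondence between unimodular and invariant embeddings already established in Subsection \ref{subsection:invariant_from_unimodular}, and to construct (2) explicitly. For (1), suppose for contradiction that an amenable URPG $(G,o)$ has a unimodular embedding $(\mathfrak{G}_0,\mathfrak{o})$ of positive intensity into $\H^2$. By the definition of intensity in Subsection \ref{subsection:invariant_from_unimodular}, positive intensity is precisely $\E\big[\lambda\big({\rm Vor}(\mathfrak{G}_0,\mathfrak{o})\big)\big]<\infty$. Theorem \ref{theorem:invariant_from_unimodular} then produces an invariant embedding of $(G,o)$ into $\H^2$ of finite positive intensity, which directly contradicts Proposition \ref{prop:no_invar_amen}. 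The argument for (3) is word-for-word identical with $\H^2$ replaced by $\R^2$ and Proposition \ref{prop:no_invar_amen} replaced by Proposition \ref{prop:no_invar_nonamen}.

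For (2) I would take $(G,o)=(\Z,0)$, which is two-ended and hence invariantly amenable. Fix a geodesic $\gamma\subset \H^2$, a point $p\in\gamma$, and the hyperbolic translation $\tau$ of unit length along $\gamma$. Embed $n\in\Z$ at $\tau^n(p)$ and each edge as the geodesic segment between its endpoints; let $\mu$ be the Dirac mass on the resulting rooted drawing (rooted at $p$), viewed as an element of $\RD(\H^2)$. To verify the Mass Transport Principle, observe that for every $n$ the birooted drawing $(\mathfrak{G}_0,\mathfrak{o},\tau^n(p))$ is isometric via $\tau^{-n}$ to $(\mathfrak{G}_0,\tau^{-n}(p),\mathfrak{o})$; that is, swapping the roles of the two roots simply flips the sign of $n$ in the equivalence class. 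Re-indexing the sum on one side of the MTP identity via $n\mapsto -n$ matches it to the other side, so $\mu$ is a unimodular embedding of $(\Z,0)$. Finally, the Voronoi cell of $\mathfrak{o}$ is the infinite strip in $\H^2$ bounded by the geodesic perpendicular bisectors of the segments $[\mathfrak{o},\tau(\mathfrak{o})]$ and $[\mathfrak{o},\tau^{-1}(\mathfrak{o})]$, which has infinite hyperbolic area, so the intensity is $0$, as required.

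The main obstacle is essentially bookkeeping rather than a genuine difficulty: one has to respect the fact that a unimodular embedding is defined only up to isometries of $M$, so both the definition of intensity and the MTP verification for the deterministic $\Z$-embedding must proceed through this equivalence. Once these identifications are in hand, each of the three parts collapses to a short argument, with (1) and (3) being one-line applications of Theorem \ref{theorem:invariant_from_unimodular} combined with the invariant-embedding obstructions of Section \ref{section:noembedding}.
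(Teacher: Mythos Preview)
Your proof is correct and follows essentially the same approach as the paper: parts (1) and (3) are derived via Theorem \ref{theorem:invariant_from_unimodular} together with Propositions \ref{prop:no_invar_amen} and \ref{prop:no_invar_nonamen}, and part (2) uses the same embedding of $\Z$ along a geodesic in $\H^2$ (the paper simply cites this as Example 1.5 of \cite{benjamini2019invariant}, whereas you spell out the MTP verification and the infinite Voronoi area).
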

\noindent

\begin{proof}
If an embedding as in (\ref{pelda1}) or (\ref{pelda3}) existed, one could apply Theorem \ref{theorem:invariant_from_unimodular} to obtain an invariant embedding, contradicting Propositions \ref{prop:no_invar_amen} and \ref{prop:no_invar_nonamen}, respectively. 

Example 1.5 in \cite{benjamini2019invariant} proves (\ref{pelda2}). That is, consider a bi-infinite geodesic in $\H^2$ through the origin $0$, and take the points at integer distance from $0$ on this geodesic to be the embedded vertices.

Part (\ref{pelda3}) is part of Theorem 1.2 in \cite{benjamini2019invariant}.
\end{proof}

\bigskip

\section*{Appendix}

\begin{proof}[Proof of Theorem \ref{theorem:invariant_from_unimodular}]
We denote by $\nu_H$ the Haar measure on $\Gamma$, normalized such that $\nu_H\big(\Stab_{\Gamma}(0)\big)=1$. Any factor allocation scheme, in particular $\Psi$ can be used to sample the Palm version of an invariant random embedded graph $\mathfrak{G}$ with finite intensity as follows. First pick $\mathfrak{G}$ randomly, then let $\mathfrak{o} \in V(\mathfrak{G})$ be the vertex whose cell contains the origin, i.e.\ $0 \in \Psi_{V(\mathfrak{G})}(\mathfrak{o})$. Now pick a Haar-random element $\varphi \in \Gamma$ such that $\varphi.\mathfrak{o}=0$ (the set of such $\varphi$ is compact, so it has finite Haar measure), and let $\mathfrak{G}' = \varphi.\mathfrak{G}$. Let $\mathfrak{G}^*$ denote the Palm version of $\mathfrak{G}$. Then in distribution $\mathfrak{G}'$ is exactly $\mathfrak{G}^*$ biased by the area of the cell of the origin, i.e.\ $\lambda\big(\Psi_{V(\mathfrak{G}^*)}(0)\big)$. To be precise, if $\mu'$ and $\mu^*$ denote the distributions of $\mathfrak{G}'$ and $\mathfrak{G}^*$ respectively, for any $A \subseteq \mathbb{G}_0(M)$ measurable we have
\[\mu'(A) = \int_{A} \lambda\big(\Psi_{V(\mathtt{G})}(0)\big)\ d \mu^*(\mathtt{G}) \bigg/ \int_{\mathbb{G}_0(M)} \lambda\big(\Psi_{V(\mathtt{G})}(0)\big) \ d \mu^*(\mathtt{G}).\]
For a reference see \cite{holroyd2005extra}, where a balanced allocation scheme (assigning equal sized cells for all centers) is constructed. In that case there is no bias happening, there is no difference between $\mathfrak{G}'$ and $\mathfrak{G}^*$.

We will reverse these steps by starting from a unimodular embedding $[\mathfrak{G_0}, \mathfrak{o}]$, sample it biased by $\lambda\big(\Psi_{V(\mathfrak{G_0})}(\mathfrak{o})\big)$, taking a representative of the class $[\mathfrak{G_0}, \mathfrak{o}]$ from $\mathbb{G}_0(M)$ with the root at $0$, picking a uniform random point $v$ from $\Psi_{V(\mathfrak{G_0})}(0)$ (the cell has finite area almost surely), picking a random isometry $\varphi \in \Gamma$ with $\varphi.0=v$, and shifting the embedding with $\varphi^{-1}$. We will show that the resulting embedding is $\Gamma$-invariant in distribution. This will complete the proof of Theorem \ref{theorem:invariant_from_unimodular}, as the original unimodular embedding is the Palm version of the invariant one, because our construction is the reverse of the one in the previous paragraph.

Throughout this paper this is the only point where fixing an arbitrary embedding of each rooted drawing $[\mathtt{G},\mathtt{o}]$ is necessary. As we have said before, $\RD(M)$ is in bijection with $\mathbb{G}_0(M)/\Stab_{\Gamma}(0)$, so let $\theta:\RD(M) \to \mathbb{G}_0(M)$ be a measurable selecting function (as both $\mathbb{G}_0(M)$ and $\RD(M)$ are standard Borel spaces, such a function exists, see for example \cite[Exercise 18.3]{kechris2012classical}). To shorten notation we write $\theta([\mathtt{G_0},\mathtt{o}]) = \overline{[\mathtt{G},\mathtt{o}]}$. Note that $\overline{[\mathtt{G},\mathtt{o}]}$ is no longer a rooted object, it is an embedded graph that happens to have a vertex at $0 \in M$.

Let $a: \mathbb{G}(M) \times M \to [0,\infty)$ be defined as follows:
\[a(\mathtt{G}, y ) = \begin{cases} 1 \textrm{ if } y \in V(\mathtt{G}) \textrm{ and } 0 \in \Psi_{V(\mathtt{G})}(y);\\ 0 \textrm{ otherwise.} \end{cases}\]
Note that if $0 \in M$ is not in the measure zero subset of $M$ 
where $\Psi_{V(\mathtt{G})}$ fails to be a partition, then it is contained in exactly one cell, so we have $\sum_{y \in V(\mathtt{G})} a(\mathtt{G}, y ) = 1$.

Let $\mu_0 \in \mathrm{Prob}\big(\RD(M)\big)$ denote the distribution of $(\mathfrak{G_0},\mathfrak{o})$. We define the measure $\mu$ on $\mathbb{G}(M)$ as follows.
\begin{equation}\label{eqn:def_of_mu}
\mu(A)=\int_{[\mathtt{G}, \mathfrak{o}] \in \RD(M)} \int_{\varphi \in \Gamma} \mathbbm{1}_A\left(\varphi.\overline{[\mathtt{G},\mathtt{o}]}\right) \cdot a\left(\varphi.\overline{[\mathtt{G},\mathtt{o}]}, \varphi.0 \right) \ d \nu_H(\varphi) \ d \mu_0([\mathtt{G},\mathtt{o}])
\end{equation}

\begin{remark}
In the definition of $\mu$ it is not apparent that any biasing with the size of the cell of $0$ took place. But in fact it did, otherwise there would be a $1/ \lambda\big(\Psi_{V\left(\overline{[\mathtt{G},\mathtt{o}]}\right)}(0)\big)$ term as well in the integral. The role of the biasing is exactly to cancel this term out. Also note that we are not normalizing $\mu$, as that would lengthen the expressions in the coming calculation. The measure of the whole space $\mu(\mathbb{G}(M))$ is $\mathbb{E}_{[\mathfrak{G_0},\mathfrak{o}]}\left[ \lambda \big( \Psi_{V(\mathfrak{G_0})}(\mathfrak{o}) \big) \right]$, this is ensured by having picked the right normalization of $\nu_H$. By assumption, this is finite, so we get a probability measure after normalization.
\end{remark}

We claim $\mu$ is $\Gamma$-invariant, which is equivalent to showing that for any $f: \mathbb{G}(M) \to [0,\infty)$ and fixed $\eta \in \Gamma$ we have
\begin{equation} \label{eqn:invariance}
\int_{\mathbb{G}(M)} \eta.f \ d \mu = \int_{\mathbb{G}(M)} f \ d \mu.
\end{equation}

By the definition of $\mu$ we have
\begin{eqnarray*}
\int_{\mathbb{G}(M)} \eta.f \ d \mu = \int_{\RD(M)} \int_{\Gamma} f\left(\eta^{-1}\varphi.\overline{[\mathtt{G},\mathtt{o}]}\right) \cdot a\left(\varphi.\overline{[\mathtt{G},\mathtt{o}]}, \varphi.0\right)\ d \nu_H(\varphi) \ d \mu_0([\mathtt{G},\mathtt{o}]) = \ldots
\end{eqnarray*}

For $\mu_0$-almost all $[\mathtt{G},\mathtt{o}] \in \RD(M)$ and $\varphi \in \Gamma$ we have $\sum_{x \in \eta^{-1}\varphi.V\left(\overline{[\mathtt{G},\mathtt{o}]}\right)} a\left(\eta^{-1}\varphi.\overline{[\mathtt{G},\mathtt{o}]}, x\right)$ = 1, so
\begin{eqnarray*}
\ldots = \int_{\RD(M)} \int_{\Gamma} \sum_{x \in \eta^{-1}\varphi.V\left(\overline{[\mathtt{G},\mathtt{o}]}\right)} a\left(\eta^{-1}\varphi.\overline{[\mathtt{G},\mathtt{o}]}, x\right) \cdot f\left(\eta^{-1}\varphi.\overline{[\mathtt{G},\mathtt{o}]}\right) \\ 
\cdot \ a\left(\varphi.\overline{[\mathtt{G},\mathtt{o}]}, \varphi.0\right)\ d \nu_H(\varphi) \ d \mu_0([\mathtt{G},\mathtt{o}])=\ldots
\end{eqnarray*}
By substituting $x = \eta^{-1}\varphi.y$ we have
\begin{eqnarray*}
\ldots = \int_{\RD(M)} \int_{\Gamma} \sum_{y \in V\left(\overline{[\mathtt{G},\mathtt{o}]}\right)} & a\left(\eta^{-1}\varphi.\overline{[\mathtt{G},\mathtt{o}]}, \eta^{-1}\varphi.y\right) \cdot f\left(\eta^{-1}\varphi.\overline{[\mathtt{G},\mathtt{o}]}\right) \\ & \cdot \ a\left(\varphi.\overline{[\mathtt{G},\mathtt{o}]}, \varphi.0\right)\ d \nu_H(\varphi) \ d \mu_0([\mathtt{G},\mathtt{o}])=\ldots
\end{eqnarray*}
We will use the Mass Transport Principle with the function $g$ that assigns to $[\mathtt{G},\mathtt{u}, \mathtt{v}] \in \mathbb{BRD}(M)$ the value \[g([\mathtt{G},\mathtt{u}, \mathtt{v}]) = a\left(\eta^{-1}\varphi.\overline{[\mathtt{G}, \mathtt{u}]}, \eta^{-1}\varphi.\overline{\mathtt{v}}\right) \cdot f\left(\eta^{-1}\varphi.\overline{[\mathtt{G}, \mathtt{u}]}\right) \cdot a\left(\varphi.\overline{[\mathtt{G}, \mathtt{u}]}, \varphi.0\right),\] where $\overline{\mathtt{v}} \in M$ is some point corresponding to $\mathtt{v}$ in the embedded $\overline{[\mathtt{G}, \mathtt{u}]}$, that is to say $\left[\overline{[\mathtt{G}, \mathtt{u}]}, 0 , \overline{\mathtt{v}}\right] = [\mathtt{G},\mathtt{u},\mathtt{v}]$. If there are multiple such points, any choice is sufficient, the value of $g$ does not depend on the choice. We get
\begin{align}
\nonumber \ldots &= \int_{\Gamma} \int_{\RD(M)}  \sum_{\mathtt{y} \in V\left(\mathtt{G}\right)} g([\mathtt{G},\mathtt{o}, \mathtt{y}]) \ d \mu_0([\mathtt{G},\mathtt{o}]) \ d \nu_H(\varphi)  = \\
\nonumber &\stackrel{\text{MTP}}{=} \int_{\Gamma} \int_{\RD(M)}  \sum_{\mathtt{y} \in V\left(\mathtt{G}\right)} g([\mathtt{G},\mathtt{y}, \mathtt{o}]) \ d \mu_0([\mathtt{G},\mathtt{o}]) \ d \nu_H(\varphi)  =\\
\label{eqn:after_MTP}
&=\int_{\RD(M)} \int_{\Gamma} \sum_{\mathtt{y} \in V\left(\mathtt{G}\right)} a\left(\eta^{-1}\varphi.\overline{[\mathtt{G}, \mathtt{y}]}, \eta^{-1}\varphi.\overline{\mathtt{o}}\right) \cdot f\left(\eta^{-1}\varphi.\overline{[\mathtt{G}, \mathtt{y}]}\right) \\ 
\nonumber & \quad \quad \quad \quad  \cdot \ a\left(\varphi.\overline{[\mathtt{G}, \mathtt{y}]}, \varphi.0\right)\ d \nu_H(\varphi) \ d \mu_0([\mathtt{G},\mathtt{o}])=\ldots
\end{align}



As we have emphasized before, the sum only makes sense once a representative of $[\mathtt{G},\mathtt{o}]$ is chosen. For us this will be $\overline{[\mathtt{G},\mathtt{o}]}$. This way $\overline{[\mathtt{G}, \mathtt{y}]}$ (as $\mathtt{y}$ runs through $V(\mathtt{G})$) is exactly $\overline{\left[\overline{[\mathtt{G}, \mathtt{o}]}, y \right]}$ as $y$ runs through $V(\overline{[\mathtt{G}, \mathtt{o}]})$. For an $y \in V\left(\overline{[\mathtt{G}, \mathtt{o}]}\right)$ let $\varphi_y \in \Gamma$ be such that $\varphi_y.\overline{\left[\overline{[\mathtt{G}, \mathtt{o}]}, y \right]}= \overline{[\mathtt{G}, \mathtt{o}]}$ and $\varphi_y.0=y$. We note that as we sum over $\mathtt{y}$, the term $\overline{\mathtt{o}}$ in (\ref{eqn:after_MTP}) also changes. As $y$ runs through $V\big(\overline{[\mathtt{G},\mathtt{o}]}\big)$ in our chosen representative 
$\varphi_y^{-1}.0$ is a valid choice of $\overline{\mathtt{o}}$. For $y$ fixed let $\psi=\eta^{-1}\varphi\varphi_y^{-1}$, and we calculate
\begin{align*}
\ldots &= \int_{\RD(M)} \int_{\Gamma} \sum_{y \in V\left(\overline{[\mathtt{G}, \mathtt{o}]}\right)} a\left(\eta^{-1}\varphi.\overline{\left[\overline{[\mathtt{G}, \mathtt{o}]}, y \right]}, \eta^{-1}\varphi\varphi_y^{-1}.0\right) \cdot f\left(\eta^{-1}\varphi.\overline{\left[\overline{[\mathtt{G}, \mathtt{o}]}, y \right]}\right) \\ 
& \quad \quad \quad \quad \quad \quad \cdot \ a\left(\varphi.\overline{\left[\overline{[\mathtt{G}, \mathtt{o}]}, y \right]}, \varphi.0\right)\ d \nu_H(\varphi) \ d \mu_0([\mathtt{G}, \mathtt{o}])=\\
&=\int_{\RD(M)} \sum_{y \in V\left(\overline{[\mathtt{G}, \mathtt{o}]}\right)} \int_{\Gamma}  a\left(\psi\varphi_y.\overline{\left[\overline{[\mathtt{G}, \mathtt{o}]}, y \right]}, \psi.0\right) \cdot f\left(\psi\varphi_y.\overline{\left[\overline{[\mathtt{G}, \mathtt{o}]}, y \right]}\right) \\
& \quad \quad \quad \quad \quad \quad \cdot \ a\left(\eta\psi\varphi_y.\overline{\left[\overline{[\mathtt{G}, \mathtt{o}]}, y \right]}, \eta\psi\varphi_y.0\right)\ d \nu_H(\varphi) \ d \mu_0([\mathtt{G}, \mathtt{o}])=\ldots
\end{align*}

The Haar measure $\nu_H$ is bi-invariant, therefore (for fixed $y$) we can integrate with respect to $\psi$ instead of $\varphi$, so we get
\begin{align*}
\ldots &= \int_{\RD(M)} \sum_{y \in V\left(\overline{[\mathtt{G}, \mathtt{o}]}\right)} \int_{\Gamma}  a\left(\psi.\overline{[\mathtt{G}, \mathtt{o}]}, \psi.0\right) \cdot f\left(\psi.\overline{[\mathtt{G}, \mathtt{o}]}\right) \\ 
&\quad \quad \quad \quad \quad \quad \cdot \ a\left(\eta\psi.\overline{[\mathtt{G}, \mathtt{o}]}, \eta\psi.y\right)\ d \nu_H(\psi) \ d \mu_0([\mathtt{G}, \mathtt{o}])= \\
&= \int_{\RD(M)} \int_{\Gamma} a\left(\psi.\overline{[\mathtt{G}, \mathtt{o}]}, \psi.0\right) \cdot f\left(\psi.\overline{[\mathtt{G}, \mathtt{o}]}\right) \\
&\quad \quad \quad \quad \quad \quad \cdot \sum_{y \in V\left(\overline{[\mathtt{G}, \mathtt{o}]}\right)} \ a\left(\eta\psi.\overline{[\mathtt{G}, \mathtt{o}]}, \eta\psi.y\right)\ d \nu_H(\psi) \ d \mu_0([\mathtt{G}, \mathtt{o}])=\ldots
\end{align*}
We have $\sum_{y \in V\left(\overline{[\mathtt{G}, \mathtt{o}]}\right)} \ a\left(\eta\psi.\overline{[\mathtt{G}, \mathtt{o}]}, \eta\psi.y\right)=1$ for almost all $\psi$ and $[\mathtt{G}, \mathtt{o}]$, therefore

\[\ldots = \int_{\RD(M)} \int_{\Gamma} a\left(\psi.\overline{[\mathtt{G}, \mathtt{o}]}, \psi.0\right) \cdot f\left(\psi.\overline{[\mathtt{G}, \mathtt{o}]}\right) \ d \nu_H(\psi) \ d \mu_0([\mathtt{G}, \mathtt{o}]) = \int_{\mathbb{G}(M)}f \ d \mu.\]

The last equality holds by the definition of $\mu$ in (\ref{eqn:def_of_mu}). Reading the calculation from start to finish yields equation (\ref{eqn:invariance}), proving that $\mu$ is indeed $\Gamma$-invariant, which completes the proof. 
\end{proof}

\begin{remark}
We never used that the function $a$ is $\{0,1\}$-valued, only that it sums to 1 almost always. For this reason one can extend Theorem \ref{theorem:invariant_from_unimodular} and Corollary \ref{cor:intensity} to \emph{fractional} allocations as well, where instead of partitioning $M$ we cover it by $L^{1}$ functions (indexed by the centers) that sum to $\mathbbm{1}_M$. The size of a cell is the integral of the function. Considering such fractional allocations has a utility in homogeneous spaces of general locally compact groups. In this generality the Voronoi allocation and balanced allocation of \cite{holroyd2005extra} can only be defined as fractional allocations, because the measure of points equidistant from two given points can be positive. 
\end{remark}

\noindent
\begin{acknowledgement}
The first author was partially supported by ERC Consolidator Grant 648017 and by Icelandic Research Fund Grant 185233-051. The second author was partially supported by NKFIH (National Research, Development and Innovation Office, Hungary) grant KKP-139502, ``Groups and graph limits''.
\end{acknowledgement}

\bibliographystyle{alpha}
\bibliography{refs}

\begin{thebibliography}{BWGT09}

\bibitem[AHNR16]{angel2016unimodular}
Omer Angel, Tom Hutchcroft, Asaf Nachmias, and Gourab Ray.
\newblock Unimodular hyperbolic triangulations: circle packing and random walk.
\newblock {\em Inventiones mathematicae}, 206(1):229--268, 2016.

\bibitem[AHNR18]{angel2018hyperbolic}
Omer Angel, Tom Hutchcroft, Asaf Nachmias, and Gourab Ray.
\newblock Hyperbolic and parabolic unimodular random maps.
\newblock {\em Geometric and Functional Analysis}, 28(4):879--942, 2018.

\bibitem[AL07]{aldous2007processes}
David Aldous and Russell Lyons.
\newblock Processes on unimodular random networks.
\newblock {\em Electronic Journal of Probability}, 12:1454--1508, 2007.

\bibitem[BHMK21]{baccelli2021unimodular}
Fran{\c{c}}ois Baccelli, Mir-Omid Haji-Mirsadeghi, and Ali Khezeli.
\newblock Unimodular {H}ausdorff and {M}inkowski dimensions.
\newblock {\em Electronic Journal of Probability}, 26:1--64, 2021.

\bibitem[BR03]{bonnington2003graphs}
C~Paul Bonnington and R~Bruce Richter.
\newblock Graphs embedded in the plane with a bounded number of accumulation
  points.
\newblock {\em Journal of Graph Theory}, 44(2):132--147, 2003.

\bibitem[BS01]{benjamini2001percolation}
Itai Benjamini and Oded Schramm.
\newblock Percolation in the hyperbolic plane.
\newblock {\em Journal of the American Mathematical Society}, 14(2):487--507,
  2001.

\bibitem[BT21]{benjamini2019invariant}
Itai Benjamini and Ádám Timár.
\newblock {Invariant embeddings of unimodular random planar graphs}.
\newblock {\em Electronic Journal of Probability}, 26:1--18, 2021.

\bibitem[BWGT09]{beineke2009topics}
Lowell~W Beineke, Robin~J Wilson, Jonathan~L Gross, and Thomas~W Tucker.
\newblock {\em Topics in topological graph theory}.
\newblock Cambridge University Press Cambridge, 2009.

\bibitem[CSKM13]{chiu2013stochastic}
Sung~Nok Chiu, Dietrich Stoyan, Wilfrid~S Kendall, and Joseph Mecke.
\newblock {\em Stochastic geometry and its applications}.
\newblock John Wiley \& Sons, 2013.

\bibitem[Die17]{diestel2017graph}
Reinhard Diestel.
\newblock {\em Graph theory}, volume 173 of {\em Graduate Texts in
  Mathematics}.
\newblock Springer-Verlag Berlin Heidelberg, 5th edition, 2017.

\bibitem[Hal66]{halin1966haufungspunktfreien}
R~Halin.
\newblock Zur h{\"a}ufungspunktfreien darstellung abz{\"a}hlbarer graphen in
  der ebene.
\newblock {\em Archiv der Mathematik}, 17(3):239--243, 1966.

\bibitem[HL05]{heveling2005characterization}
Matthias Heveling and G{\"u}nter Last.
\newblock Characterization of palm measures via bijective point-shifts.
\newblock {\em Annals of probability}, pages 1698--1715, 2005.

\bibitem[HP05]{holroyd2005extra}
Alexander~E Holroyd and Yuval Peres.
\newblock Extra heads and invariant allocations.
\newblock {\em Annals of probability}, 33(1):31--52, 2005.

\bibitem[HS93]{he1993fixed}
Zheng-Xu He and Oded Schramm.
\newblock Fixed points, koebe uniformization and circle packings.
\newblock {\em Annals of Mathematics}, pages 369--406, 1993.

\bibitem[Kec12]{kechris2012classical}
Alexander Kechris.
\newblock {\em Classical descriptive set theory}, volume 156.
\newblock Springer Science \& Business Media, 2012.

\bibitem[Las10]{last2010stationary}
G{\"u}nter Last.
\newblock Stationary random measures on homogeneous spaces.
\newblock {\em Journal of Theoretical Probability}, 23(2):478--497, 2010.

\bibitem[Lov12]{lovasz2012large}
L{\'a}szl{\'o} Lov{\'a}sz.
\newblock {\em Large networks and graph limits}, volume~60.
\newblock American Mathematical Soc., 2012.

\bibitem[LP17]{last2017lectures}
G{\"u}nter Last and Mathew Penrose.
\newblock {\em Lectures on the Poisson process}, volume~7.
\newblock Cambridge University Press, 2017.

\bibitem[LT09]{last2009invariant}
G{\"u}nter Last and Hermann Thorisson.
\newblock Invariant transports of stationary random measures and
  mass-stationarity.
\newblock {\em The Annals of Probability}, 37(2):790--813, 2009.

\bibitem[LZ13]{lando2013graphs}
Sergei~K. Lando and Alexander~K. Zvonkin.
\newblock {\em Graphs on surfaces and their applications}, volume 141.
\newblock Springer Science \& Business Media, 2013.

\bibitem[Mol12]{moller2012lectures}
Jesper Moller.
\newblock {\em Lectures on random Voronoi tessellations}, volume~87.
\newblock Springer Science \& Business Media, 2012.

\bibitem[MT01]{mohar2001graphs}
Bojan Mohar and Carsten Thomassen.
\newblock {\em Graphs on surfaces}.
\newblock Johns Hopkins University Press, 2001.

\bibitem[Nev77]{neveu1977processus}
Jacques Neveu.
\newblock Processus ponctuels.
\newblock In {\em Ecole d’Et{\'e} de Probabilit{\'e}s de Saint-Flour
  VI-1976}, pages 249--445. Springer, 1977.

\bibitem[RS04]{robertson2004graph}
Neil Robertson and Paul~D Seymour.
\newblock Graph minors. xx. wagner's conjecture.
\newblock {\em Journal of Combinatorial Theory, Series B}, 92(2):325--357,
  2004.

\bibitem[RZ90]{rother1990palm}
W~Rother and M~Z{\"a}hle.
\newblock Palm distributions in homogeneous spaces.
\newblock {\em Mathematische Nachrichten}, 149(1):255--263, 1990.

\bibitem[Tim21]{timar2017nonamenable}
Ádám Timár.
\newblock {A nonamenable “factor” of a Euclidean space}.
\newblock {\em The Annals of Probability}, 49(3):1427 -- 1449, 2021.

\bibitem[Tim23]{timar2023unimodular}
Ádám Timár.
\newblock Unimodular random one-ended planar graphs are sofic.
\newblock {\em Combinatorics, Probability and Computing}, page 1–8, 2023.

\end{thebibliography}

\bigskip
\noindent
{\bf \'Ad\'am Tim\'ar}\\
Division of Mathematics, The Science Institute, University of Iceland\\
and\\
Alfr\'ed R\'enyi Institute of Mathematics, Budapest\\
\texttt{madaramit[at]gmail.com}
\medskip
\ \\
{\bf L\'aszl\'o M\'arton T\'oth}\\
\'Ecole Polytechnique F\'ed\'erale de Lausanne\\
\texttt{laszlomarton.toth[at]epfl.ch}

\end{document}